\numberwithin{equation}{section}
\newcommand{\ui}{\underline{i}}
\newcommand{\uj}{\underline{j}}
\newcommand{\meet}{\mathrel{\text{\raisebox{0.25ex}{\scalebox{0.8}{$\wedge$}}}}}
\newcommand{\join}{\mathrel{\text{\raisebox{0.25ex}{\scalebox{0.8}{$\vee$}}}}}
\newcommand{\mC}{\mathfrak{C}}
\newcommand{\lex}{lex}
\newcommand{\ve}{\varepsilon}
\newcommand{\leqlex}{\leq_{\lex}}
\newcommand{\geqlex}{\geq_{\lex}}
\DeclareMathOperator{\Left}{left}
\DeclareMathOperator{\Right}{right}
\newcommand{\mCright}{\mathfrak{C}_{\Right}}
\newcommand{\mCleft}{\mathfrak{C}_{\Left}}
\theoremstyle{plain}
\newtheorem{theorem}{Theorem}[section]
\newtheorem{coro}[theorem]{Corollary}
\newtheorem{proposition}[theorem]{Proposition}
\newtheorem{lemma}[theorem]{Lemma}
\theoremstyle{definition}
\newtheorem{remark}[theorem]{Remark}
\newtheorem{example}[theorem]{Example}
\newtheorem{definition}[theorem]{Definition}
\title{Schubert valuations on Grassmann varieties}
\author{Rocco Chiriv\`i}
\address{Dipartimento di Matematica e Fisica ``Ennio De Giorgi'', Universit\`a del Salento, Lecce, Italy}
\email{rocco.chirivi@unisalento.it}
\author{Xin Fang} 
\address{Lehrstuhl f\"ur Algebra und Darstellungstheorie, RWTH Aachen University, Pontdriesch 10-16, 52062 Aachen, Germany}
\email{xinfang.math@gmail.com}
\author{Peter Littelmann}
\address{Department Mathematik/Informatik, Universit\"at zu K\"oln, 50931, Cologne, Germany}
\email{peter.littelmann@math.uni-koeln.de}
\begin{document}

\maketitle

\begin{abstract}
The goal of the paper is twofold: on one side it provides an order structure on the set of all maximal chains in the Bruhat poset of Schubert varieties in a Grassmann variety; on the other hand, using this order structure, it works out explicit formulae for the valuation and the Newton-Okounkov body associated to each maximal chain appearing in the framework of Seshadri stratification.

\end{abstract}

\section{Introduction}
In \cite{CFL1}, the authors introduced the notion of a Seshadri stratification on an embedded projective variety.
It can be thought of as a generalization of the Newton-Okounkov theory: the flag of subvarieties
in Newton-Okounkov theory is replaced by a web of subvarieties, indexed by a partially ordered set $A$.
The choice of a sequence of parameters in the Newton-Okounkov theory is replaced by the choice of a 
collection of functions, called \textit{extremal functions}, in the case of a Seshadri stratification. And last, but not least,
the valuation constructed in the framework of Newton-Okounkov theory is replaced
by a quasi-valuation, which is defined as a minimum function over a finite number of valuations,
these are associated to maximal chains in the set $A$.

The structure of the set of all maximal chains in the partially ordered set $A$ and 
the choice of the extremal functions play an important role in the theory of Seshadri stratifications. 
This is the background and the motivation for this article,
in which we concentrate on one example: the Grassmann variety 
$\textrm{Gr}_{d,n}\hookrightarrow \mathbb P(\Lambda^d\mathbb K^n)$, embedded into the projective space 
via the Pl\"ucker embedding. It can be considered as one of the simplest non-trivial examples.

In Section 2, we fix as web of subvarieties for the Seshadri stratification the Schubert varieties. So the set $A$ is
well known in this case, it is the set  $I(d,n)$ of subsets of cardinality $d$ in $\{1,2,\ldots,n\}$ with $d < n$.
Denote by $C(d,n)$ the set of all maximal chains in $I(d,n)$, we want to endow $C(d,n)$  with a poset structure. 
Set $r=d(n-d)$, this is the dimension of the  Grassmann variety, and let $\mathsf{S}_r$ be the symmetric group on $r$ letters. 
We define on the set $C(d,n)$ operators behaving like Kashiwara's crystal operators, and we show that the associated
graph is connected. For two element $x,y\in  \mathsf{S}_r$ denote by $[x,y]_R$ the interval in $\mathsf{S}_r$ with 
respect to the right weak Bruhat order. We use the operators to show (see Theorem~\ref{main_theorem} for a precise formulation)
\par\vskip 5pt\noindent
\textbf{Theorem A.} \textit{
There exists an element $\sigma_{\max}\in \mathsf{S}_r$ and a bijection $C(d,n) \longrightarrow   [e,\sigma_{\max}]_R$
between the set $C(d,n)$ of all maximal chains in $I(d,n)$ and the interval $[e,\sigma_{\max}]_R\subseteq \mathsf{S}_r$.}
\par\vskip 5pt\noindent
The second part of the article concerns the choice of the extremal vectors. A fixed flag $Y^k_\bullet= Y_k\supseteq \ldots\supseteq Y_0=\{p\}$ 
of subvarieties contained in an embedded projective variety $Y\subseteq \mathbb P(V)$,
together with an associated sequence of parameters $\mathbf{u}=(u_r,\ldots,u_1)$ gives rise 
(under certain conditions) to a valuation $\nu^{\mathbf{u}}_{Y_\bullet}$. 
The associated sequence of parameters is far from being unique, in Section~3 
we review how the valuation $\nu^{\mathbf{u}}_{Y_\bullet}$ changes if one replaces
$\mathbf{u}$ by a different associated sequence of parameters $\mathbf{v}$.

In Section 4 we use this to discuss the connection between two different types
of valuations one gets for maximal chains of Schubert varieties on the 
Grassmann variety: one starts with the Seshadri stratification having 
as web of subvarieties the Schubert varieties $\{X(\underline i)\mid \underline i\in I(d,n)\}$,
and as collection of functions the Pl\"ucker coordinates $\{P_{\underline i}\mid \underline i\in I(d,n)\}$.
Each maximal chain $\mathfrak C\in C(d,n)$ gives rise to a valuation $\nu_{\mathfrak C}^s$, a valuation monoid $\Gamma^s_{\nu_\mathfrak C}$ 
and a Newton-Okounkov body $\Delta_{\nu_{\mathfrak{C}}}^s$.  The valuation monoid and the Newton-Okounkov body are usually not easy to describe.  Note that there have already been quite a few   articles on valuations and Newton-Okounkov bodies for Grassmann varieties \cite{Kav15,FN,RW}. Their approaches are related to 
the parametrization of canonical bases or  to cluster theory. We would like to point out that the valuation $\nu_\mathfrak{C}^s$ used in the theory of Seshadri stratifications is different to all the valuations in the abovementioned work. Indeed, as in Example \ref{last:example}, the valuation image of a Pl\"ucker coordinate may have both positive and negative coordinates, which is not the case in the above work.

Another class are the valuations associated to the so-called birational sequences. Again, these valuations
are indexed by maximal chains $\mathfrak C\in C(d,n)$, but one has a different associated sequence of parameters.
We denote the associated valuations by $\nu_{\mathfrak C}^b$ for $\mathfrak C\in C(d,n)$. Let 
 $\Gamma_{\nu_\mathfrak C}^b$ be the corresponding valuation monoid 
and denote by $\Delta_{\nu_{\mathfrak{C}}}^b$ the associated Newton-Okounkov body.   One of the advantages of introducing the notion of birational sequences lies in the fact that, rather than calculating vanishing multiplicities of functions, the valuation monoid can be computed explicitly using basic representation theory of Lie algebras.

The goal of Section 4 is to work out explicit formulae for the valuation image $\nu_\mathfrak{C}^s(P_{\underline{i}})$ of a Pl\"ucker coordinate $P_{\underline{i}}$ under $\nu_\mathfrak{C}^s$. 
The key ingredient which helps us to archive the goal is the birational sequence. Indeed, using basic representation theory of Lie algebras, we obtain a closed formula for the value of a Pl\"ucker coordinate under the valuation $\nu_{\mathfrak{C}_{\mathrm{right}}}^b$ (Proposition \ref{Prop:ValCright}). 
Using this formula, we show that the Newton-Okounkov body $\Delta_{\nu_{\mathfrak{C}_{\mathrm{right}}}}^b$ coincides with the order polytope of a poset (Proposition~\ref{Prop:OrderPolytope}).
Furthermore, when changing the maximal chain from $\mathfrak{C}_{\mathrm{right}}$ to $\mathfrak{D}$, the valuations $\nu_{\mathfrak{C}_{\mathrm{right}}}^b$ and $\nu_{\mathfrak{D}}^b$ are related by the permutation $\sigma_{\mathfrak{D}}$ (Corollary \ref{Coro:Prop:2-move}). What remains is to apply the main result in Section 3 because the two valuations above differ by an upper-triangular unipotent matrix $R$.

We summarize this idea in the following diagram: given a homogeneous function $f$ on $\mathrm{Gr}_{d,n}$,
$$
\xymatrix{
\nu_{\mathfrak{C}_{\mathbf{right}}}^s(f) \ar@{.>}[r] \ar[d]_R & \nu_{\mathfrak{D}}^s(f)\\
\nu_{\mathfrak{C}_{\mathbf{right}}}^b(f) \ar[r]_{\sigma_{\mathfrak{D}}^{-1}} & \nu_{\mathfrak{D}}^b(f) \ar[u]_{R^{-1}}.
}
$$
The dashed arrow, which is our goal, is the composition of other three arrows. As a consequence, we obtain an explicit description of the Newton-Okounkov body 
$\Delta_{\nu_{\mathfrak{D}}}^s$.
\vskip 5pt
\noindent
\textbf{Acknowledgements.} The work of R.C. is partially supported by PRIN 2022S8SSW2 ``Algebraic and geometric aspects of Lie Theory''. The work of X.F. is a contribution to Project-ID 286237555-TRR 195 by the Deutsche Forschungsgemeinschaft (DFG, German Research Foundation). The work of P.L. is partially supported by DFG SFB/Transregio 191 ``Symplektische Strukturen in Geometrie, Algebra und Dynamik''.

\section{A poset structure on maximal chains}\label{section:Schubert:sequences}
This section is of purely combinatorial flavour. We provide
a natural bijection between the set of
maximal chains of Schubert varieties in the Grassmann variety $\textrm{Gr}_{d,n}$, $1\le d<n$, and an
interval $[e,\sigma_{\max}]_R\subseteq 
\mathsf{S}_{r}$ in the symmetric group operating on $r$ letters.
Here $r=d(n-d)$ is the dimension of the Grassman variety and $[\cdot,\cdot]_R$ refers to the interval with 
respect to the right weak Bruhat order on $\mathsf{S}_{r}$.
\subsection{Basic facts}\label{section:basic:facts}

 Let $I(d,n)$ be the set of subsets of cardinality $d$ in $\{1,2,\ldots,n\}$ with $d < n$. We call such a subset a \emph{standard row} of length $d$ and we will always write it as an ordered sequence $\ui = i_1 i_2 \cdots i_d$  with $i_1<\ldots<i_d$. 

We begin by recalling some well known results.
Let $\mathsf{S}_n$ be the symmetric group acting on $n$ letters, and let $\mathsf{S}_d\times 
\mathsf{S}_{n-d}\subseteq \mathsf{S}_n$ 
be the parabolic subgroup of permutations acting only on the first $d$ repectively $(n-d)$ letters.
The set $I(d,n)$ is in bijection with the set 
$\mathsf{S}_n^d$ of \textit{minimal representatives} of elements of the quotient $\mathsf{S}_n/\mathsf{S}_d \times \mathsf{S}_{n-d}$. 
An explicit bijection is given by the map:
\[
\mathsf{S}_n^d \longrightarrow  I(d,n),\quad \sigma \longmapsto \sigma(1) \cdots \sigma(d).
\]
The set $I(d,n)$ is a poset with the partial order defined by:
\[
\ui \leq \uj\quad\textrm{ if and only if }\quad i_1\leq j_1, i_2\leq j_2, \ldots, i_d\leq j_d.
\]
The set $\mathsf{S}_n^d\subseteq \mathsf{S}_n$ 
of minimal representatives is endowed with a partial order too, the restriction of the \textit{Bruhat order} on 
$\mathsf{S}_n$. The identification of $\mathsf{S}_n^d$ with
$I(d,n)$  preserves the partial orders, so we call the partial order
on $I(d,n)$ the \textit{Bruhat order on standard rows}.

A relation $\ui < \uj$ is called a \textit{cover} if $\ui < \uj'\le \uj$ implies 
$\uj'= \uj$. The relation $\ui < \uj$ is a cover for the Bruhat order if and only 
if there exists $1\leq h\leq d$ such that $i_k = j_k$ for all $k\neq h$ and $i_h = j_h + 1$.

The set $I(d,n)$ has a unique minimal and a unique maximal element:
$$
\ui_{\min} = 12\cdots d\quad\textrm{\rm respectively}\quad
\ui_{\max } = (n-d+1) (n-d+2)\cdots (n-1) n.
$$ 
For $\ui\in I(d,n)$, the length $\ell(\ui)$ is defined as
$$
\sum_{h=1}^d (i_h - h) = \sum_{h=1}^d i_h - d(d+1)/2.
$$
The length of $\ui\in I(d,n)$ is the same as the length of the corresponding element in $\mathsf{S}_n^d\subseteq \mathsf{S}_n$,
viewed as an element in the Coxeter group $\mathsf{S}_n$. In particular $\ell(\ui_{\min}) = 0$
 and we set $r := \ell(\ui_{\max}) = d(n-d)$. 

The poset $I(d,n)$ can be made a distributive lattice by setting:
\[
\ui \meet \uj = (\min\{i_1,j_1\})(\min\{i_2, j_2\})\cdots(\min\{i_d,j_d\})
\]
and
\[
\ui \join \uj = (\max\{i_1,j_1\})(\max\{i_2, j_2\})\cdots(\max\{i_d,j_d\}).
\]
Given two strings, i.e. ordered sequences $\underline{s}$, $\underline{t}$ in some alphabet, we denote by 
$\underline{s}*\underline{t}$ their concatenation. The following is useful for inductive arguments.
\begin{proposition}\label{proposition_posetDecomposition}
We have
\[
I(d, n) \, = \, I(d, n-1) \, \sqcup\, (I(d-1, n-1) * n).
\]
In particular $I(d, n-1)$ is the subset of $I(d,n)$ consisting of rows not containing $n$, and the set $I(d-1,n-1) * n$ is the subset of 
$I(d,n)$ consisting of those standard rows containing $n$.
\end{proposition}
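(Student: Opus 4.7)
The plan is to partition $I(d,n)$ according to a single binary criterion: whether the entry $n$ appears in $\ui$ or not. This immediately produces a disjoint decomposition $I(d,n) = A \sqcup B$, where $A$ consists of those rows $\ui$ with $n \notin \ui$ and $B$ consists of those rows with $n \in \ui$. Disjointness is built into the definition, so the only work is to identify $A$ with $I(d,n-1)$ and $B$ with $I(d-1,n-1)*n$.

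For the first identification, I would note that $\ui \in A$ means $\ui$ is a $d$-element subset of $\{1,\ldots,n\}$ avoiding $n$, hence a $d$-element subset of $\{1,\ldots,n-1\}$. This is precisely the definition of $I(d,n-1)$, and the reverse inclusion is clear. For the second identification, the key observation is that because we always write $\ui = i_1 i_2 \cdots i_d$ in strictly increasing order, the presence of $n$ among the $i_h$ forces $i_d = n$. Consequently every $\ui \in B$ factors uniquely as $\ui = \uj * n$ with $\uj = i_1 \cdots i_{d-1}$ a strictly increasing sequence of $d-1$ integers in $\{1,\ldots,n-1\}$, i.e.\ an element of $I(d-1,n-1)$; and conversely any $\uj \in I(d-1,n-1)$ produces such an $\ui$ by appending $n$ on the right.

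There is no real obstacle here. The only point worth flagging is that the factorisation $\ui = \uj * n$ for $\ui \in B$ depends on the convention that standard rows are listed in increasing order, so that $n$, if present, must occupy the last slot; without this convention the concatenation notation $I(d-1,n-1)*n$ would not pick out the right subset. Once this is observed the two identifications are bijective by inspection, and combining them yields the claimed decomposition.
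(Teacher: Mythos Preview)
Your proposal is correct and follows exactly the same approach as the paper: the paper's own proof simply says that the second part (the identification of $I(d,n-1)$ with rows not containing $n$ and of $I(d-1,n-1)*n$ with rows containing $n$) is clear, and that the first part follows from the second. You have merely spelled out this ``clear'' step in full detail, including the observation that the increasing-order convention forces $n$ to occupy the last slot.
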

\begin{proof} The second part is clear and the first part follows by the second one.
\end{proof}
 We sometimes need a total order refining the partial order ``$\le$'', we take the lexicographic order, i.e. $\ui \le_{lex} \uj$ if there exists $1\le t\le d$ such that $i_s=j_s$ for $s<t$ and $i_t<j_t$.
\subsection{Maximal chains}
A \textit{maximal chain} in $I(d,n)$ is a totally ordered subset which is maximal with this property with respect to the inclusion of subsets.
In more geometric terms: to fix a maximal chain in
$I(d,n)$ means one fixes a chain of Schubert varieties, subsequently contained in each other of codimension one, and maximal with this property:
$$
\textrm{\rm Gr}_{d,n}=X({\ui_{\max}})\supsetneq X({\underline i}_{r-1}) \supsetneq\ldots \supsetneq 
X({\underline i}_1) \supsetneq 
X({\underline i}_{\min}).
$$
\begin{remark}\label{remark_gradedPoset} The poset $I(d,n)$ is graded and all maximal chains
\[
\mC:\ui_r = \ui_{\max} > \ui_{r-1} > \cdots > \ui_1 > \ui_0 = \ui_{\min}
\]
in $I(d,n)$ have the same length $r = d (n - d)$.
\end{remark}

\begin{definition}\label{definition_MaxChains} We denote by $C(d,n)$ the set of all maximal chains in $I(d,n)$. We define a total order on  $C(d,n)$ as follows: for two maximal chains $\mC:\ui_r > \ui_{r-1} > \cdots > \ui_0$ and $\mathfrak{D}:\uj_r > \uj_{r-1} > \cdots > \uj_0$ we write $ \mathfrak{D}\geqlex \mC$ if
\[
\uj_r * \uj_{r-1} * \cdots * \uj_0 \geqlex \ui_r * \ui_{r-1} * \cdots * \ui_0 ,
\]
where $\leqlex$ is the lexicographic order on strings in the alphabet $\{1,2,\ldots,n\}$. In such a case, we say that $\mC$ is \emph{on the right of} $\mathfrak{D}$ and that $\mathfrak{D}$ is \emph{on the left} of $\mC$.
\end{definition}
The wordings ``on the left'' and ``on the right'' are due to the fact that we usually depict the poset $I(d,n)$ as diagram writing greater elements on the left. 
\begin{example}\label{example:I24}
The poset $I(2,4)$ has two maximal chains:
$\mathfrak{D}: 34 > 24 > 23 > 13 > 12$ and 
$\mC: 34 > 24 > 14 > 13 > 12$, and $\mathfrak{D}$ 
is greater than $\mC$, so $\mC$ is on the right of $\mathfrak{D}$.
\[
\xymatrix{
34\ar@{-}[dr]\\
& 24\ar@{-}[dl]\ar@{-}[dr]\\
23\ar@{-}[dr] & & 14\ar@{-}[dl]\\
& 13\ar@{-}[dl]\\
12}
\]
\end{example}
\begin{definition}\label{remark_minimalMaximalChainsdefn}
We denote the $\leqlex$--lowest chain in $C(d,n)$ by $\mCright$, it is the rightmost maximal chain. The $\leqlex$--greatest maximal chain
is denoted by $\mCleft$, it is the leftmost maximal chain.
\end{definition}
\begin{remark}\label{remark_minimalMaximalChains}
 The chain $\mCright$ is clearly obtained by starting with the maximal element $\ui_{\max} = (n-d+1)(n-d+2)\cdots n$ and: lowering its first entry till $1$, then lowering its second entry till $2$ and so on. We get for $\mCright$ the chain  
\[
\begin{array}{rcl}
\mCright &:& {\scriptstyle \ui_{\max}= (n-d+1)(n-d+2)\cdots n} > {\scriptstyle (n-d)(n-d+2)\cdots n} > {\scriptstyle (n-d-1)(n-d+2)\cdots n} > \cdots > {\scriptstyle 1(n-d+2)\cdots n}\\
&&> {\scriptstyle 1(n-d+1)(n-d+3)\cdots n} > {\scriptstyle 1(n-d)(n-d+3)\cdots n} > \cdots > {\scriptstyle 12(n+d+3)\cdots n} >\\
&&\hfil \vdots\\
&&> {\scriptstyle 12\cdots (d-1)n} > {\scriptstyle 12\cdots (d-1)(n-1)} >\cdots > {\scriptstyle 12\cdots (d-1)d = \ui_{\min}}.
\end{array}
\]
The chain $\mCleft$ is obtained by starting with $\ui_{\max}$ and: lowering the first entry once, then lowering the second entry once, and continuing in this way till lowering the last entry, then lowering again the first entry and so on. We get:
\[
\begin{array}{rcl}
\mCleft&:& {\scriptstyle \ui_{\max} = (n-d+1)(n-d+2)\cdots n} > {\scriptstyle (n-d)(n-d+2)\cdots n} > {\scriptstyle (n-d)(n-d+1)(n-d+3)\cdots n} > \cdots \\
&&\hfill \cdots > 
{\scriptstyle (n-d)(n-d+1)(n-d+2)\cdots(n-1)}\\
&& > {\scriptstyle (n-d-1)(n-d+1)(n-d+2)\cdots(n-1)} > {\scriptstyle (n-d-1)(n-d)(n-d+2)\cdots(n-1)} > \cdots \\
&&\hfill \cdots > 
{\scriptstyle (n-d-1)(n-d)(n-d+1)\cdots(n-2)}\\
&& \hfil\vdots\\
&& > {\scriptstyle 234\cdots d(d+1)} > {\scriptstyle 134\cdots d(d+1)} > {\scriptstyle 124\cdots d(d+1)} > \cdots > {\scriptstyle 12\cdots (d-1)d = \ui_{\min}}.
\end{array}
\]
\end{remark}
The maximal element in $I(d,n-1)$ is  $\ui_{\max, n-1}:=(n-d)(n-d+1)\cdots (n-1)$. Let $M\subseteq I(d,n)$
be the subset: $M=\{\ui\in I(d,n)\mid \ell(\ui)=
\ell(\ui_{\max, n-1}) \}$ of all elements of the same length as  $\ui_{\max, n-1}$.
\begin{lemma}\label{lemma_maximalChainContainingSubMax}
\begin{itemize}
\item[(i)]
$\ui_{\max, n-1}$ is maximal in $M$ with respect to the lexicographic order $\ge_{lex}$, i.e.:  $\ui_{\max, n-1}\ge_{lex} \uj$ for all $\uj\in M$.
\item[(ii)]  Consider the interval $[\ui_{\max, n-1},\ui_{\max}]_{lex}$ with respect to the lexicographic order. This set is equal to the interval $[\ui_{\max, n-1},\ui_{\max}]$ with respect to the Bruhat order, it is
also totally ordered with respect to the Bruhat order, and its elements are precisely the elements in the first two rows for $\mCleft$ above.  
\end{itemize}
\end{lemma}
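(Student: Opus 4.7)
For part (i), my plan is to exploit the length formula $\ell(\uj) = \sum_h j_h - d(d+1)/2$: membership of $\uj$ in $M$ forces $\sum_h j_h = d(n-d-1) + d(d+1)/2$. If $j_1 \geq n-d+1$, then strict monotonicity of the entries gives $j_h \geq n-d+h$ for every $h$, hence $\ell(\uj) \geq r$, contradicting $\uj \in M$. Hence $j_1 \leq n-d$ for any $\uj \in M$, and values $j_1 < n-d$ immediately yield $\uj <_{\lex} \ui_{\max, n-1}$. When $j_1 = n-d$, the remaining entries $\{j_2, \ldots, j_d\}$ form a $(d-1)$-subset of $\{n-d+1, \ldots, n\}$, and the sum constraint $\sum_h j_h = d(n-d-1) + d(d+1)/2$ forces the missing element to be $n$, giving $\uj = \ui_{\max, n-1}$.

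For part (ii) I would first analyze the Bruhat interval $[\ui_{\max, n-1}, \ui_{\max}]$ directly. Any $\uj$ in it satisfies $j_h \in \{n-d+h-1, n-d+h\}$ for every $h$; strict monotonicity then forces the set $S = \{h : j_h = n-d+h\}$ to be an upper set of $\{1, \ldots, d\}$, so $S = \{k, k+1, \ldots, d\}$ for some $k \in \{1, \ldots, d+1\}$. This produces exactly $d+1$ elements $\uj^{(1)}, \ldots, \uj^{(d+1)}$; consecutive $\uj^{(k)}$ and $\uj^{(k+1)}$ differ only at position $k$ by a drop of $1$, i.e., a Bruhat cover, so the list is totally ordered in the Bruhat order. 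Matching this list with the display of $\mCleft$ identifies these rows with the elements printed on its first two display lines, running from $\ui_{\max}$ down to $\ui_{\max, n-1}$.

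To finish (ii), it remains to show that the lex interval $[\ui_{\max, n-1}, \ui_{\max}]_{\lex}$ coincides with this set. I would split on the value of $j_1$: the case $j_1 = n-d+1$ combined with $\uj \leqlex \ui_{\max}$ and strict increase of the entries forces $\uj = \ui_{\max}$; the case $j_1 = n-d$ makes $\uj \geqlex \ui_{\max, n-1}$ automatic and admits any $(d-1)$-subset of $\{n-d+1, \ldots, n\}$ as the remaining entries, giving exactly $d$ rows; other values of $j_1$ are excluded by the one-sided lex bounds. The main bookkeeping obstacle is making the two parametrizations visibly agree — that the missing element of $\{n-d+1, \ldots, n\}$ in the lex description is $n-d+k-1$ in the Bruhat parametrization — but this reduces to a short identification once the indexing is aligned, and a simple count then confirms the two intervals have the same $d+1$ elements.
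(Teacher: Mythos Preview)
Your proposal is correct. For part (i) you use essentially the same idea as the paper---the length constraint forces $\sum j_h = \sum i_h$, and combined with $j_h \ge i_h$ this gives equality---though you organize it as a case split on $j_1$ whereas the paper argues directly that $\uj \geqlex \ui_{\max,n-1}$ implies $j_h \ge n-d+h-1$ for all $h$ and then invokes the sum. For part (ii) the paper simply declares the claim ``evident by the definition of the lexicographic order on rows''; your explicit parametrization of the Bruhat interval by upper sets $S=\{k,\ldots,d\}$ and of the lex interval by the missing element $m\in\{n-d+1,\ldots,n\}$ (together with the single case $j_1=n-d+1$) is a genuine fleshing-out of what the paper leaves implicit, and the identification $m=n-d+k-1$ you flag is exactly the bijection needed. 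Once you have the containment (Bruhat interval $\subseteq$ lex interval, since each $\uj^{(k)}$ with $k\ge 2$ has $j_1=n-d$) and the matching cardinality $d+1$, the equality is immediate---so the ``bookkeeping obstacle'' you mention is in fact already resolved by your own setup.
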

\begin{proof} 
Let $\uj = j_1j_2\cdots j_d$ and $\ui_{\max, n-1} = i_1i_2\cdots i_d$.
If $\uj\geqlex\ui_{\max, n-1}$, then $j_1\geq n-d$, hence $j_2\geq n-d+1$, $j_3\geq n-d+2$, ... So 
we have $j_k \geq i_k$ for all $k$. But if $\uj$ has the same length as $\ui_{\max, n-1}$, then 
$\sum j_k = (n-d) + (n-d+1) + \cdots + (n-1)$. We get $j_k = n-d+k - 1$ for all $k$, hence $\uj = \ui_{\max, n-1}$. This proves (i).

The claim (ii) is evident by the definition of the lexicographic order on rows.
\end{proof}

\begin{proposition}\label{proposition_chainDecomposition}
Let $\mC:\ui_r > \ui_{r-1} > \cdots > \ui_0$ be a maximal chain in $I(d,n)$. Then there exists $0 \leq k < r$ 
such that $\ui_h \in I(d, n-1)$ for all $h \leq k$ and $\ui_h \in I(d-1, n-1)*n$ for all $h > k$. Moreover, if 
$\ui_k = i_{1,k}i_{2,k}\cdots i_{d-1,k}i_{d,k}$, then $i_{d,k} = n-1$ and $\ui_{k+1} = i_{1,k}i_{2,k}\cdots i_{d-1,k} * n$.
\end{proposition}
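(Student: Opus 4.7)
The plan is to exploit the decomposition $I(d,n) = I(d,n-1) \sqcup (I(d-1,n-1)*n)$ from Proposition~\ref{proposition_posetDecomposition} and track along the chain whether or not the row contains the entry $n$. The top element $\underline{i}_r = \underline{i}_{\max} = (n-d+1)\cdots n$ contains $n$, while the bottom element $\underline{i}_0 = 12\cdots d$ does not (since $d<n$). So the transition from ``contains $n$'' to ``does not contain $n$'' occurs at least once as we descend. The main point is to show it occurs exactly once, at a well-defined index $k$.

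The key observation is that the property of \emph{not} containing $n$ is preserved under going downward in a cover. Indeed, if $\underline{i}_{h} = j_1\cdots j_d$ does not contain $n$, then $j_d \leq n-1$; and if $\underline{i}_{h-1}$ is covered by $\underline{i}_h$, then by the description of covers recalled in Section~\ref{section:basic:facts} they differ in a single coordinate by $\pm 1$, so the entries of $\underline{i}_{h-1}$ are still bounded above by $j_d\leq n-1$. Iterating, I obtain that the subset $\{h\mid \underline{i}_h\in I(d,n-1)\}$ is a ``lower interval'' of the chain: there exists a largest such index $k$, and necessarily $\underline{i}_h\in I(d-1,n-1)*n$ for $h>k$. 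The inequalities $0\leq k<r$ follow from the two endpoints noted above.

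For the second assertion, I apply the cover description to the specific pair $\underline{i}_k<\underline{i}_{k+1}$. By definition of $k$, the row $\underline{i}_k$ does not contain $n$ while $\underline{i}_{k+1}$ does. Since they differ in exactly one position $h$ with $(\underline{i}_{k+1})_h = (\underline{i}_k)_h + 1$, the newly appearing entry $n$ in $\underline{i}_{k+1}$ must come from an entry equal to $n-1$ in $\underline{i}_k$ at the same position $h$. As the entries are strictly increasing and $n$ is the largest possible value, $h$ has to be the last position $d$. Hence $i_{d,k} = n-1$, the first $d-1$ entries of $\underline{i}_{k+1}$ agree with those of $\underline{i}_k$, and the last entry of $\underline{i}_{k+1}$ is $n$, which is precisely the claimed equality $\underline{i}_{k+1} = i_{1,k}i_{2,k}\cdots i_{d-1,k}*n$.

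There is no real obstacle; everything reduces to the cover description in the Bruhat order on $I(d,n)$. The only step requiring (trivial) care is verifying the monotonicity claim ``once $n$ is dropped it cannot reappear going further down'', which is immediate from the single-coordinate-change nature of a cover.
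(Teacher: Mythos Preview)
Your proof is correct and follows essentially the same approach as the paper's: both use the cover description to show that the set of indices $h$ with $\ui_h\in I(d,n-1)$ is a lower interval of $\{0,\ldots,r\}$, and then apply the cover description to the pair $\ui_k<\ui_{k+1}$ to obtain the second assertion. The only cosmetic difference is that the paper appeals directly to the entrywise inequality $\ui_h\leq\ui_k$ (resp.\ $\ui_{k+1}\leq\ui_h$) rather than iterating along covers, but this amounts to the same thing.
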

\begin{proof} Since $\ui_0 = \ui_{\min}\in I(d,n-1)$ and $i_{max}\in I(d-1,n-1)*n$, there exists a maximal $k$ such that 
$$\ui_k = i_{1,k}i_{2,k}\cdots i_{d-1,k}i_{d,k}\in I(d,n-1).$$ 
We have $i_{d,k} < n$ and clearly $k<r$, hence the last entry of each $\ui_h$, $h = 0,1,\ldots,k$, is less than $n$ since $\ui_h \leq \ui_k$ for all $h = 0,1,\ldots,k$. This shows that $\ui_h \in I(d, n-1)$ for all $h = 0,1,\ldots,k$. 

Now $\ui_{k+1} \in I(d-1, n-1)*n$, in particular its last entry is $n$. Since $\ui_{k+1}$ covers $\ui_k$, these two rows must differ by exactly one entry: the last one. This proves the last statement of the proposition.

Finally, since $\ui_{k+1}\leq\ui_h$ for all $h = k+1, \ldots, r$, it is clear that the last entry of each of these rows is $n$; hence $\ui_h\in I(d-1,n-1)*n$ for all $h = k+1, \ldots, r$.
\end{proof}

\subsection{Root operators on standard rows}
We  define a class of operators on 
the set $I(d,n)$ of standard rows. In analogy 
to the Kashiwara operators on the path model and on crystal graphs,
we call them \textit{root operators}. 
\begin{definition}\label{definition_rootOperator}
Let $1\leq s\leq n-1$, $1\leq h\leq d$ with $n-d+h>s\geq h$. For each such pair $(s,h)$ we define a 
map $f_{s,h} : I(d,n)\sqcup \{0\} \longmapsto I(d,n)\sqcup \{0\}$, called a \emph{lowering root operator}. 
Here we consider $0$ as a special symbol.
Set $f_{s,h}(0) = 0$ and for $\ui = i_1i_2\cdots i_d\in I(d,n)$
\[
f_{s,h} (\ui) = \left\{
\begin{array}{ll}
i_1i_2\cdots i_{h-1}\, (s + 1)\, i_{h+1}\ldots i_d, & \textrm{if }i_h = s\textrm{ and either }h = d \textrm{ or } i_{h+1} \geq s+2;\\
0, & \textrm{otherwise.}
\end{array}
\right.
\]
In the same way we define the \emph{rising root operator} $e_{s,h} : I(d,n)\sqcup \{0\} \longmapsto I(d,n)\sqcup \{0\}$
\[
e_{s,h} (\ui) = \left\{
\begin{array}{ll}
i_1i_2\cdots i_{h-1}\, s\, i_{h+1}\ldots i_d, & \textrm{if }i_h = s + 1\textrm{ and either }h = 1 \textrm{ or } i_{h-1} \leq s - 1;\\
0, & \textrm{otherwise.}
\end{array}
\right.
\]
\end{definition}
\begin{remark}\label{remark_loweringRisingRow}
It is clear by definition that if $f_{s,h}(\ui) \neq 0$ then $e_{s,h}f_{s,h}(\ui) = \ui$ and, analogously, 
if $e_{s,h}(\ui) \neq 0$ then $f_{s,h}e_{s,h}(\ui) = \ui$.
\end{remark}
\begin{remark}\label{number_remark_loweringRisingRow}
For $h=1$ we have $(n-d+1)-1$ operators $f_{s,1}$, for $h=2$ we  have $(n-d+2)-2$ operators $f_{s,2}$, and so on. It follows:
we have exactly $r=d(n-d)$ operators of the form $f_{s,h}$ and, similarly,  $r$ operators of the form $e_{s,h}$.
\end{remark}
\begin{remark}
The operators $f_{s,h}$ and $e_{s,h}$ are 
a ``refinement" of the usual crystal
graph operators on Young tableaux in the following sense: if we denote the lowering
crystal graph operators by $f_s$, $1\le s\le n-1$, then either $f_s(\underline{i}) = 
f_{s,h}(\underline{i})$ if $f_s$ replaces
$i_h= s$ by $s + 1$ in $\underline{i}$ (and this $h$ is the unique for which $f_{s,h}(\underline{i}) \not= 0$), or 
$f_s(\underline{i}) = 0$,
and $f_{s;h}(\underline{i}) = 0$ for all $h$. 
A similar formula holds also for the lowing crystal graph operators.
\end{remark}
We study the effect of products of the operators of degree 2:
\begin{proposition}\label{proposition_rootOperatorFirst} 
Let $\ui\in I(d,n)$.
\begin{itemize}
\item[(i)] If $\vert h-k\vert >1$, or $\vert h-k\vert = 1$ and $\vert s-t\vert>1$, then $f_{s,h}f_{t,k}(\ui) = f_{t,k}f_{s,h}(\ui)$.
\item[(ii)] If $|s - t| > 1$ or $s=t$, then $f_{s,h}f_{t,h}(\ui) = 0 = f_{t,h}f_{s,h}(\ui)$.
\item[(iii)] $f_{s,h}f_{s+1,h}(\ui) = 0$ and, for a row $\ui = i_1i_2\cdots i_d$, we have $f_{s+1,h}f_{s,h}(\ui) \neq 0$ 
if and only if $i_h = s$ and either $s=d$ or $i_{h+1}>s+2$.
\item[(iv)] If $\vert h-k\vert = 1$ and $\vert s-t\vert=1$, then 
$f_{s+1,h+1}f_{s,h} (\ui), f_{s,h+1}f_{s+1,h} (\ui),f_{s+1,h}f_{s,h+1}(\ui)=0$, and, for a row 
$\ui = i_1i_2\cdots i_d$, we have $f_{s,h}f_{s+1,h+1}(\ui) \neq 0$ if 
and only if $i_h = s$, $i_{h+1} = s+1$ and either $h+1=d$ or $i_{h+2}>s+2$.
\end{itemize}
\end{proposition}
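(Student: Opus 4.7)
The plan is a direct case analysis straight from Definition~\ref{definition_rootOperator}. Recall that $f_{s,h}(\ui)$ is nonzero precisely when $i_h = s$ and the side condition holds (either $h = d$ or $i_{h+1} \ge s+2$), and in that case only the entry $i_h$ changes, from $s$ to $s+1$. I would exploit the locality of this rule: $f_{s,h}$ reads only $i_h$ and $i_{h+1}$, and writes only to $i_h$.

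For part (i), when $|h-k|>1$ neither operator touches or reads an entry that the other modifies, so both compositions agree under identical pointwise conditions on $\ui$. When $\{h,k\} = \{h,h+1\}$ and $|s-t|>1$, the only possible interference is that $f_{t,h+1}$ overwrites $i_{h+1}$, which feeds the side condition of $f_{s,h}$; since $|s-t|>1$, the values $t$ and $t+1$ are either both $\ge s+2$ or both $<s+2$, so this side condition is unchanged. I would conclude that either both compositions vanish, or both produce the same row with $i_h$ and $i_{h+1}$ simultaneously raised.

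Parts (ii) and (iii) I would handle by tracking $i_h$. After $f_{t,h}$ the value is $i_h = t+1$, which equals $s$ only if $s = t+1$; this rules out both $s=t$ and $|s-t|>1$, giving (ii). For (iii), $f_{s,h}f_{s+1,h}$ vanishes since after the first step $i_h=s+2 \neq s$. Conversely, $f_{s+1,h}f_{s,h}(\ui)\neq 0$ requires $i_h = s$ together with the side condition for $f_{s,h}$ ($h=d$ or $i_{h+1}\ge s+2$) and, applied afterwards, the stricter side condition for $f_{s+1,h}$ ($h=d$ or $i_{h+1}\ge s+3$); these combine to exactly the stated criterion.

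For part (iv), each of the three vanishing compositions is blocked by a single value clash between what the second operator demands at $i_h$ or $i_{h+1}$ and what the first has forced: for example, $f_{s+1,h+1}f_{s,h}$ dies because $f_{s,h}$ needs $i_{h+1}\ge s+2$ whereas $f_{s+1,h+1}$ needs $i_{h+1}=s+1$, and the other two are dispatched by analogous clashes (for $f_{s,h+1}f_{s+1,h}$ the first forces $i_{h+1}\ge s+3$ against a demand $i_{h+1}=s$; for $f_{s+1,h}f_{s,h+1}$ the strict inequality $i_h < i_{h+1} = s$ forbids $i_h = s+1$). Finally, $f_{s,h}f_{s+1,h+1}(\ui)\neq 0$ demands $i_{h+1}=s+1$ with the side condition $h+1=d$ or $i_{h+2}\ge s+3$ for $f_{s+1,h+1}$; after that step $i_{h+1}=s+2$, so $f_{s,h}$ applies exactly when $i_h=s$, yielding the stated criterion. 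The whole argument is essentially bookkeeping --- tracking, for each composition, the coupled conditions on the quadruple $(i_{h-1},i_h,i_{h+1},i_{h+2})$ --- and I do not expect any substantive mathematical obstacle.
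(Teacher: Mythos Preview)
Your proposal is correct and follows essentially the same approach as the paper: a direct case analysis from the definition, exploiting that $f_{s,h}$ reads only $i_h$ and $i_{h+1}$ and writes only to $i_h$. The paper's own proof is considerably terser---it dispatches (iii) and (iv) as ``evident by definition''---so your explicit bookkeeping on the clashes in (iv) and the stacked side conditions in (iii) is, if anything, more careful than what appears there.
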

\begin{proof} We start by proving (i). Since $h\neq k$, the operators $f_{s,h}$ and $f_{t,k}$ change 
different entries of the row $\ui$ if they are not $0$. If $\vert h-k\vert>1$, then one has clearly
$f_{s,h}f_{t,k}(\ui) = f_{t,k}f_{s,h}(\ui)$. If $\vert h-k\vert=1$, then we know in addition: $\vert s-t\vert>1$. 
It follows applying one combination  of the operators to $\ui$ gives zero if and only if it is so for the other. 
And, if applying a combination to $\ui$ gives something nonzero, then $f_{s,h}(f_{t,k}(\ui)) = f_{t,k}(f_{s,h}(\ui))$.

If $s = t$ in (ii), then $f_{s,h}^2 = 0$ by  definition. If $|s - t| > 1$ then either $f_{t,h}(\ui) = 0$ or 
$f_{s,h}(f_{t,h}(\ui)) = 0$, so both, $f_{s,h}f_{t,h}(\ui)$ and $f_{t,h}f_{s,h}(\ui)$, are $0$.

The properties (iii) and (iv) are evident by definition.
\end{proof}

\begin{proposition}\label{proposition_rootOperatorSecond}
\begin{itemize}
\item[(i)] $\ui \leq \uj$ if and only if $\uj$ can be obtained from $\ui$ by applying a 
suitable monomial in the root operators  $f_{s,h}$ to $\ui$.
\item[(ii)] Let $\mC:\ui_{\max} = \ui_r > \ui_{r-1} > \cdots > \ui_1 > \ui_0 = \ui_{\min}$ be a 
maximal chain in $I(d,n)$. Then, for each $t = 1, \ldots, r$, there exists $s_t,h_t$ such that 
$\ui_t = f_{s_t,h_t}(\ui_{t-1})$. Moreover, each root operator $f_{s,h}$,
$1\le s\le n-1$, $1\le h\le d$ with $n-d+h>s\ge h$, appears in the list $(f_{s_t,h_t}\,|\,t = 1, \ldots, r)$, 
and it appears exactly once.
\item[(iii)] In particular, let $\mC:\ui_{\max} = \ui_r > \ui_{r-1} > \cdots > \ui_1 > \ui_0 = \ui_{\min}$ 
be a maximal chain and suppose that $\ui_h\in I(d,n-1)$ for $h = 1, 2, \ldots, t$ and 
$\ui_h\in I(d-1,n-1)*n$ for all $t < h \leq r$; then $\ui_{t+1} = f_{n-1,d}(\ui_t)$; 
in other words, $s_{t+1} = n-1$ and $h_{t+1} = d$.
\end{itemize}
\end{proposition}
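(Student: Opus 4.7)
My plan is to prove the three parts of the proposition in sequence. For (i), the first step is to observe that every application of a root operator produces a cover relation: if $f_{s,h}(\ui)\neq 0$, then $f_{s,h}(\ui)$ differs from $\ui$ in the single entry at position $h$, which is increased by exactly $1$, and by the description of covers recalled at the start of Section~\ref{section:basic:facts} this is precisely a cover $\ui\lessdot f_{s,h}(\ui)$. The ``if'' direction of (i) follows at once. For ``only if'', I would argue by induction on $\ell(\uj)-\ell(\ui)$. The induction step reduces to showing that any cover $\ui\lessdot\underline{k}$ is realized by some $f_{s,h}$: if $k_p=i_p+1$ and $k_q=i_q$ for $q\neq p$, set $s:=i_p$ and $h:=p$; the range condition $h\le s<n-d+h$ comes from $i_p\ge p$ and $k_p\le n-d+p$ (both standard-row inequalities), and the spacing condition for nonzero output (either $h=d$, or $i_{h+1}\ge s+2$) is forced by strict monotonicity of the rows, since $i_{p+1}=k_{p+1}>k_p=s+1$ when $h<d$.

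For (ii), the existence of $f_{s_t,h_t}$ at each step is immediate from (i), since each $\ui_{t-1}\lessdot\ui_t$ is a cover. To obtain the enumeration assertion, I would combine Remark~\ref{remark_gradedPoset} (every maximal chain has length $r$) with Remark~\ref{number_remark_loweringRisingRow} (there are exactly $r$ root operators of the form $f_{s,h}$). It then suffices to check that no operator is used twice in a maximal chain: if $f_{s,h}$ appeared at steps $t<t'$, then the $h$-th entry of $\ui_t$ would equal $s+1$ while the $h$-th entry of $\ui_{t'-1}$ would equal $s$, contradicting $\ui_t\le \ui_{t'-1}$, an inequality which is componentwise by the description of the Bruhat order on standard rows.

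For (iii), I would simply invoke Proposition~\ref{proposition_chainDecomposition} with $k=t$: it states that $i_{d,t}=n-1$ and that $\ui_{t+1}$ agrees with $\ui_t$ on the first $d-1$ entries and has $n$ in position $d$. By Definition~\ref{definition_rootOperator} this is exactly $f_{n-1,d}(\ui_t)$, the side condition being automatic (the case $h=d$ of the definition), and $s=n-1$ lies in the required range $d\le s<n-d+d=n$ since $d<n$.

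The only substantive obstacle lies inside (i): one has to verify with some care, from the arithmetic constraints defining a standard row, that the candidate operator $f_{s,h}$ associated to a cover is both admissible (the index inequality $h\le s<n-d+h$) and evaluates nonzero on $\ui$ (the spacing with the next entry). These checks are elementary but they are the only place where the combinatorial definitions do real work; once established, part (ii) follows by the counting argument above, and part (iii) is a direct translation of the chain decomposition already in hand.
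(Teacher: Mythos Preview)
Your proposal is correct and follows essentially the same approach as the paper: part (i) via the cover--operator correspondence, part (iii) via Proposition~\ref{proposition_chainDecomposition}, and part (ii) via (i) plus the observation that entries increase monotonically along the chain. The only cosmetic difference is in (ii): the paper argues directly that the $h$-th entry must climb from $h$ to $n-d+h$ in unit steps (forcing each $f_{s,h}$ to appear exactly once), whereas you reach the same conclusion by combining a no-repetition argument with the count from Remark~\ref{number_remark_loweringRisingRow}; both arguments rest on the same monotonicity.
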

\begin{proof} The property (i) follows immediately  
by the basic facts recalled in Section~\ref{section:basic:facts}. Reformulated in terms of operators one has: $\ui<\uj$ is a cover
if and only if $\ui=f_{s,h}(\uj)$ for some $1\le s\le n-1$, $1\le h\le d$, $n-d+h>s\ge h$.

The existence of the operators $f_{s_t,h_t}$ in (ii) follows by (i). Each root operator $f_{s,h}$ with $1\le s\le n-1$, $1\le h\le d$, $n-d+h>s\ge h$
must appear in the list $(f_{s_t,h_t}\,|\,t = 1, \ldots, r)$ once and only once since the $h$--th entry of $\ui_{\min}$, which is equal to $h$, 
must reach the $h$--th entry of $\ui_{\max}$, 
which is $n-d + h$, in increasing steps. This proves (ii).

Finally, (iii) is just a reformulation of the last statement of Proposition \ref{proposition_chainDecomposition}.
\end{proof}

\subsection{Maximal chains and the symmetric group on \textit{r} letters}
By part $(ii)$ of Proposition~\ref{proposition_rootOperatorSecond},
we can associate to a maximal chain 
$\mC:\ui_{\max} = \ui_r > \ui_{r-1} > \cdots > \ui_0 = \ui_{\min}$ in $I(d,n)$ an ordered list 
$(f_{s_t,h_t}\,|\,t = 1, \ldots, r)$, 
i.e. $\ui_t=f_{s_t,h_t}(\ui_{t-1})$. 
Each of the different $r$ operators $f_{s,h}$ with $1\le s\le n-1$, $1\le h\le d$, $n-d+h>s\ge h$, appears precisely once. Hence replacing 
one maximal chain by another corresponds in terms of these
ordered lists of operators to applying a permutation to the entries of this
ordered list. To make this more precise, we fix a kind of \textit{base point}, the chain $\mCright$. 

\begin{definition}\label{definition_rootOperatorIndexing}
Let $(f_{s_t,h_t}\,|\,t = 1, \ldots, r)$ be the ordered
list of operators (Proposition~\ref{proposition_rootOperatorSecond}) 
associated to the maximal chain $\mCright$.
For $t = 1, \ldots, r$, we write  
$\widetilde{f}_t $ for $f_{s_t,h_t}$ on $I(d,n)\sqcup\{0\}$.
We call $\widetilde{f}_1,\ldots,\widetilde{f}_{r}$ the \emph{indexed root operators}.
\end{definition}

Given another maximal chain 
$\mC: \ui_{r} > \cdots >\ui_{0}$, 
by part (ii) of Proposition \ref{proposition_rootOperatorSecond} we have a list 
$(f_{s_t,h_t}\,|\, t = 1,\ldots,r)$ of root operators such 
that $\ui_t = f_{s_t,h_t}(\ui_{t-1})$. This list is  a permutation of the list associated to $\mC_{\mathrm{right}}$.
So there exists a unique permutation $\sigma = 
\sigma_\mC \in \mathsf{S}_{r}$ (the symmetric group
acting on the set $\{1,2,\ldots, r\}$) 
such that: $f_{s_t,h_t} = \widetilde{f}_{\sigma(t)}$.

\begin{definition}\label{definition_chainPermutation}
We call $\sigma_\mC$ the \emph{permutation} associated to $\mC$.
\end{definition}
\begin{remark}\label{remark_bjornerWachs}
The permutation $\sigma_\mathfrak C$ is the labeling of $\mathfrak C$ as defined by
Bj\"orner and Wachs in \cite{BW} if we fix as reduced expression
$$
s_{n-d}s_{n-d-1}\cdots s_1 s_{n-d+1}s_{n-d}\cdots s_2\cdots s_{n-2}s_{n-3}\cdots s_{d-1} s_{n-1}s_{n-2} \cdots s_d
$$
for the longest element $\underline{i}_{\max}\in\mathsf S^d_n\subseteq \mathsf S_n$,
where $s_i := (i, i + 1)$  for $i = 1,\ldots,n-1.$
\end{remark}

\begin{lemma}\label{lemma_indexedOperators}
\begin{itemize}
\item[(i)] The permutation associated to $\mCright$ is the identity of $\mathsf S_{r}$.
\item[(ii)] The map  $C(d,n)\rightarrow \mathsf{S}_{r}$, 
$\mC\longmapsto\sigma_\mC$ is injective.
\item[(iii)] If $\widetilde{f}_t = f_{s_t,h_t}$, $t = 1, \ldots,r$, as in Definition \ref{definition_rootOperatorIndexing}, and $t_1\leq t_2$ then $h_{t_1} \geq h_{t_2}$.
\end{itemize}
\end{lemma}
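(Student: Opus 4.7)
The plan is to dispatch the three parts in order of increasing substance, each following fairly directly from the definitions once the conventions are aligned.

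Part (i) should be essentially a tautology. By Definition~\ref{definition_rootOperatorIndexing}, the indexed operators $\widetilde{f}_1,\ldots,\widetilde{f}_r$ are by construction precisely the entries of the ordered list $(f_{s_t,h_t})$ associated to $\mCright$. Substituting $\mC=\mCright$ into the defining identity $f_{s_t,h_t}=\widetilde{f}_{\sigma(t)}$ therefore yields $\widetilde{f}_t=\widetilde{f}_{\sigma_{\mCright}(t)}$ for every $t$, and since the $r$ operators $\widetilde{f}_t$ are pairwise distinct (as observed in Remark~\ref{number_remark_loweringRisingRow}) this forces $\sigma_{\mCright}=\id$.

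For (ii) the idea is that the permutation $\sigma\in\mathsf{S}_r$ carries all the information of the ordered operator list $(\widetilde{f}_{\sigma(1)},\widetilde{f}_{\sigma(2)},\ldots,\widetilde{f}_{\sigma(r)})$, and this list lets us reconstruct the chain by successive application: set $\ui_0=\ui_{\min}$ and define $\ui_t=\widetilde{f}_{\sigma(t)}(\ui_{t-1})$ for $t=1,\ldots,r$. By Proposition~\ref{proposition_rootOperatorSecond}(ii), the chain built this way is exactly $\mC$. Hence $\sigma_{\mC_1}=\sigma_{\mC_2}$ implies $\mC_1=\mC_2$, proving injectivity.

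For (iii) I would directly unpack the explicit description of $\mCright$ in Remark~\ref{remark_minimalMaximalChains} but read it bottom-up rather than top-down. According to that remark, scanning $\mCright$ from $\ui_{\max}$ down to $\ui_{\min}$ first lowers the first entry through $(n-d)$ cover relations, then the second, and so on, ending with $(n-d)$ cover relations that lower the $d$-th entry. Reversing this direction to match the indexing $\ui_0<\ui_1<\cdots<\ui_r$, the first $(n-d)$ steps act on position $h=d$ (each is an $f_{s,d}$), the next $(n-d)$ act on $h=d-1$, and so forth, until the final $(n-d)$ steps act on $h=1$. Thus the sequence $(h_t)_{t=1,\ldots,r}$ takes the block form $(d,\ldots,d,\,d-1,\ldots,d-1,\,\ldots,\,1,\ldots,1)$, which is evidently non-increasing, and the claim follows. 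The only piece of genuine care in the whole argument is this last alignment between the top-down narration of Remark~\ref{remark_minimalMaximalChains} and the bottom-up indexing of Definition~\ref{definition_rootOperatorIndexing}; once the block structure is visible, parts (i)--(iii) are formal.
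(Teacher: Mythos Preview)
Your proof is correct and follows essentially the same approach as the paper: part (i) is immediate from the definition, part (ii) is the observation that the permutation determines the ordered operator list which in turn reconstructs the chain, and part (iii) is read off directly from the explicit description of $\mCright$ in Remark~\ref{remark_minimalMaximalChains}. If anything, your write-up is more detailed than the paper's (which dispatches each part in a single sentence), and your care about the bottom-up versus top-down indexing in (iii) is well placed.
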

\begin{proof}
The property (i) is clear by the definition. The map $\mC\longmapsto \sigma_\mC$ is clearly injective because:  
from the permutation one can reconstruct the ordered 
list of root operators, which gives back the maximal chain $\mC$.

Finally, in order to prove (iii), just look at $\mCright:\ui_r > \cdots > \ui_0$ and note that, as we move from $\ui_0$ to $\ui_r$, we first increase the last $d$--th entry as much as possible, then we increase the $(d-1)$--entry and so.
\end{proof}

\subsection{Root operators on maximal chains}
By Lemma \ref{lemma_indexedOperators}, one can identify $C(d,n)$ with a subset of $S_{r}$, but we 
do not have an action of the symmetric group on the set of maximal chains $C(d,n)$. To overcome 
these limitations, in this section we define certain operators on the set $C(d,n)$. To define the 
operators, we need the following lemma.
\begin{lemma}\label{lemma_intervalLengthTwo} For an interval $\left[\,\ui,\,\uj\,\right]$ 
(with respect to the Bruhat order) of length $2$ (i.e. $\ell(\uj) = \ell(\ui)+2$) in $I(d,n)$, we have two possibilities:
\begin{itemize}
\item[(A)] either $\left[\,\ui,\, \uj\,\right] = \left\{\ui,\ui',\uj\right\}$ with $\ui < \ui' < \uj$,
\[
\xymatrix{
\uj\ar@{-}[dr]\\
& \ui'\ar@{-}[dr]\\
& & \ui.        
}
\]
\item[(B)] or $\left[\,\ui,\, \uj\,\right] = \left\{\ui,\ui', \uj',\uj\right\}$ with $\ui < \ui', \uj' < \uj$ and $\ui'\meet \uj' = \ui$, $\ui'\join \uj' = \uj$,
\[
\xymatrix{
& \uj\ar@{-}[dr]\ar@{-}[dl]\\
\uj'\ar@{-}[dr] & & \ui'\ar@{-}[dl]\\
& \ui.        
}
\]
\end{itemize}
\end{lemma}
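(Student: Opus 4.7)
My plan is to exploit the componentwise characterization of the Bruhat order on $I(d,n)$ together with the length condition $\ell(\uj)=\ell(\ui)+2$ to enumerate all intermediates and then verify the diamond structure by direct computation. First, since the poset is graded, there exists at least one intermediate $\uk$ with $\ell(\uk)=\ell(\ui)+1$, and by the cover description recalled in Section~\ref{section:basic:facts} each of the two covers $\ui<\uk<\uj$ modifies a single entry by~$+1$. This forces a dichotomy on the pair $(\ui,\uj)$: either they differ in a single position $h$ (with $j_h=i_h+2$ and $j_m=i_m$ elsewhere), or they differ in exactly two positions $h<h'$ (with $j_h=i_h+1$ and $j_{h'}=i_{h'}+1$).

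Next I would observe that any intermediate $\uk$ is constrained by $i_m\le k_m\le j_m$ for every $m$ together with $\sum_m(k_m-i_m)=1$. In the single-position case this pins $\uk$ down uniquely: $k_h=i_h+1$ and $k_m=i_m$ otherwise. The fact that $\uj$ is itself a standard row forces $i_h+2<i_{h+1}$, from which one immediately reads that $\uk$ is strictly increasing, so the interval has the three-element form~(A). In the two-position case the same constraint isolates precisely two candidate intermediates: $\uk$ with $k_h=i_h+1$, and $\uk'$ with $k'_{h'}=i_{h'}+1$, both agreeing with $\ui$ at every other position.

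I would then check when the two candidates are valid standard rows. The only place strict monotonicity can fail is at the incremented entry, and a short check shows $\uk$ lies in $I(d,n)$ iff $i_{h+1}>i_h+1$ and similarly for $\uk'$. At least one of them is valid by gradedness; if exactly one is valid we fall into case~(A), while if both are valid they are distinct (they differ at different positions) and their componentwise minimum is $\ui$ and their componentwise maximum is $\uj$, yielding case~(B) with $\uk\meet\uk'=\ui$ and $\uk\join\uk'=\uj$. The only non-bookkeeping step is this strict-monotonicity verification for the candidate rows, which I expect to be a one-liner from the fact that $\uj$ is itself a valid standard row; no structural input beyond the cover rule and the componentwise order is needed.
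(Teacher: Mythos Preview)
Your argument is correct and complete. The dichotomy on how $\ui$ and $\uj$ differ, the enumeration of candidate intermediates via $i_m\le k_m\le j_m$ and $\sum_m(k_m-i_m)=1$, and the strict-monotonicity check using that $\uj$ is a standard row all go through as you describe; in particular the only way case~(A) arises in the two-position situation is when $h'=h+1$ and $i_{h+1}=i_h+1$, exactly where your candidate $\uk$ fails to be strictly increasing while $\uk'$ survives.

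Your route differs from the paper's: the paper writes $\uj=f_{s,h}f_{t,k}(\ui)$ and invokes the case analysis of Proposition~\ref{proposition_rootOperatorFirst} on degree-two products of root operators, splitting according to whether $h=k$ or $h\neq k$ (and in the latter case whether $|h-k|=1$ with $|s-t|=1$). Your proof is more elementary and self-contained, needing only the componentwise order, the cover description, and the length function; it avoids the operator formalism entirely. The paper's approach, on the other hand, ties the lemma directly into the root-operator framework that drives the rest of the section (Definitions~\ref{definition_flatPeak} and~\ref{definition_rootOperatorMaxChain}, Proposition~\ref{proposition_operatorPermutation1}), so the operator language in its proof is reused downstream. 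Either argument is short; yours is the cleaner standalone proof, while the paper's is better integrated with the machinery that follows.
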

\begin{proof} Let $\ui = i_1i_2\cdots i_d$ and $\uj = j_1j_2\cdots j_d$. By (i) in 
Proposition \ref{proposition_rootOperatorSecond}, there exists $f_{s,h}$ and 
$f_{t,k}$ such that $\uj = f_{s,h}\,f_{t,k}\,(\ui)$. If $h = k$ then, by (ii) and (iii) in 
Proposition \ref{proposition_rootOperatorFirst}, $s = t + 1$ and
\[
\left[\,\ui,\, \uj\,\right] = \left\{\,\ui < \ui' = f_{t,h}\,(\ui) < \uj = f_{t+1, h}\, f_{t,h}\,(\ui\}))\,\right\}.
\]
If $h\neq k$, then we are either in the case (i) of Proposition \ref{proposition_rootOperatorFirst} and hence
\[
\left[\,\ui,\, \uj\,\right] = \left\{\,\ui < \ui' = f_{t,h}(\ui), \, \uj' = f_{s,k}(\ui) < \uj = f_{s,k}\, f_{t, h}(\ui))\,\right\}
\]
and the last statement in (B) is clear. Or we are  in the case (iv) of Proposition \ref{proposition_rootOperatorFirst} and hence
\[
\left[\,\ui,\, \uj\,\right] = \left\{\,\ui < \ui' = f_{s+1,h+1}(\ui) < \uj = f_{s,h}\, f_{s+1,h+1}\,(\ui)\,\right\}.
\]
\end{proof}

\begin{definition}\label{definition_flatPeak}
Let $\mC:\ui_{\max} = \ui_r > \ui_{r-1} > \cdots > \ui_1 > \ui_0 = \ui_{\min}$ be a maximal chain in $I(d,n)$. We say that $\ui_t$ is:
\begin{itemize}
\item[(F)] \emph{flat} in $\mC$ if $t=0$, or $t = r$, or $1\leq t\leq r-1$ and the interval $[\ui_{t-1},\ui_{t+1}]$ has cardinality $3$ (hence it is as in (A) of Lemma~\ref{lemma_intervalLengthTwo}),
\item[(P)] a \emph{peak} in $\mC$ if $1\leq t< r$, and the interval $[\ui_{t-1},\ui_{t+1}]= \{\ui_{t-1},\ui_t,\ui'_t, \ui_{t+1}\}$ 
has cardinality $4$ (hence it is as in (B) of Lemma~\ref{lemma_intervalLengthTwo}).

The peak is called a \emph{right peak} if $\ui'_t >_{\lex} \ui_t$, it is called a \emph{left peak} if $\ui_t >_{\lex} \ui'_t$.
In both cases we call $\ui'_t$ the \emph{opposite} peak of $\ui_t$ in $\mC$.
\end{itemize}
\end{definition}
\begin{definition}\label{definition_rootOperatorMaxChain}
For $1\leq t\leq r-1$, we define a map $F_t:C(d,n)\sqcup\{0\}\longrightarrow C(d,n)\sqcup\{0\}$, where we consider $0$ as a special symbol. We set $F_t(0) = 0$ for all $t=1,\ldots,r-1$.

Now let $1\leq t\leq r-1$ and let $\mC:\ui_r > \ui_{r-1} > \cdots > \ui_1 > \ui_0$ be a maximal chain in $I(d,n)$.  If $\ui_t$ is a right peak in $\mC$, the we set
\[
F_t(\mC) = \ui_r > \cdots > \ui_{t+1} > \ui'_t > \underline{i
}_{t-1} > \cdots > \ui_0,
\]
where $\ui'_t$ is its opposite peak. Otherwise, i.e. if $\ui_t$ is flat or a left peak, we set $F_t(\mC) = 0$. We call $F_t$ a \emph{lowering root operator}.

In the same way we define the \emph{rising root operator} $E_t:C(d,n)\sqcup\{0\}\longrightarrow C(d,n)\sqcup\{0\}$. Set $E_t(0) = 0$ for all $t=1,\ldots,r-1$. For $1\leq t \leq r-1$ 
and a maximal chain $\mC:\ui_r > \ui_{r-1} > \cdots > \ui_1 > \ui_0$ we set
\[
E_t(\mC) = \ui_r > \cdots > \ui_{t+1} > \ui'_t > \underline{i}_{t-1} > \cdots > \ui_0
\]
if $\ui_t$ is a left peak in $\mC$, where $\ui'_t$ is its opposite peak. Otherwise we set $E_t(\mC) = 0$.
\end{definition}

\begin{remark}
The standard rows $\ui_0$ and $\ui_r$ are never a peak.
\end{remark}

\begin{remark}\label{remark_loweringRisingChain}
Suppose $1\le t\le r-1$.
It is clear by definition and Lemma \ref{lemma_intervalLengthTwo} that if $F_t(\mC) \neq 0$, 
then $E_tF_t(\mC) = \mC$, and if $E_t(\mC) \neq 0$, then $F_tE_t(\mC) = \mC$.
\end{remark}
\begin{example}
Recall that (Example~\ref{example:I24}) there are two maximal chains in $I(2,4)$.
As simple example, we see the unique non zero lowering operator in case:
\[
\xymatrix{
34\ar@{-}[dr] & & & & 34\ar@{-}[dr]\\
& 24\ar@{.}[dl]\ar@{-}[dr] & & & & 24\ar@{-}[dl]\ar@{.}[dr]\\
23\ar@{.}[dr] & & 14\ar@{-}[dl] & \stackrel{F_2}{\longmapsto} & 23\ar@{-}[dr] & & 14\ar@{.}[dl]\\
& 13\ar@{-}[dl] & & & & 13\ar@{-}[dl]\\
12 & & & & 12\\
}
\]
\end{example}
We denote by $s_1, \ldots, s_{r-1}$ the simple reflection of the symmetric group $\mathsf{S}_{r}$ 
and by $\leq_R$ the right weak Bruhat order on $\mathsf{S}_{r}$. I.e. $u\le_R v$
if and only if some initial substring of some reduced word for $v$ is a (reduced) word for $u$.
The following simple lemma will be needed later.
\begin{lemma}\label{lemma_weakRightOrder}
Let $1\le t\le r-1$ and
let $\sigma\in \mathsf{S}_{r}$. Then $\sigma s_t >_R \sigma$ (resp. $\sigma s_t <_R \sigma$), if and only if $\sigma(t) < \sigma(t+1)$ (resp. $\sigma(t) > \sigma(t+1)$).
\end{lemma}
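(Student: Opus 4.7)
The plan is to reduce the claim to the standard characterization of the right weak Bruhat order via length, and then compute how the inversion count of $\sigma$ changes when we multiply on the right by $s_t$.

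First I would recall that, by definition, $\sigma \leq_R \tau$ iff $\tau = \sigma w$ with $\ell(\tau)=\ell(\sigma)+\ell(w)$. Taking $\tau = \sigma s_t$, this says $\sigma s_t >_R \sigma$ iff $\ell(\sigma s_t) = \ell(\sigma) + 1$. Since multiplication by a simple reflection changes length by exactly $\pm 1$, the condition $\sigma s_t >_R \sigma$ is equivalent to $\ell(\sigma s_t) > \ell(\sigma)$, and the complementary case $\sigma s_t <_R \sigma$ corresponds to $\ell(\sigma s_t) < \ell(\sigma)$.

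Next I would compute $\ell(\sigma s_t) - \ell(\sigma)$ directly using $\ell(\pi) = |\{(i,j) : i<j, \pi(i)>\pi(j)\}|$. Observe that $\sigma s_t$ agrees with $\sigma$ everywhere except at positions $t,t+1$, where the two values $\sigma(t)$ and $\sigma(t+1)$ are interchanged. Hence, for any pair $\{i,j\}$ disjoint from $\{t,t+1\}$, the inversion status is unchanged. For a position $i \notin \{t,t+1\}$, the pair of positions $\{i,t\}$ and $\{i,t+1\}$ together contribute the same total number of inversions to $\sigma$ and to $\sigma s_t$, because passing from $\sigma$ to $\sigma s_t$ just swaps the roles of $\sigma(t)$ and $\sigma(t+1)$ against the fixed value $\sigma(i)$. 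Therefore the only pair whose inversion status genuinely changes is $(t,t+1)$ itself.

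Finally I would conclude: the pair $(t,t+1)$ is an inversion of $\sigma$ iff $\sigma(t)>\sigma(t+1)$, and an inversion of $\sigma s_t$ iff $\sigma(t+1)>\sigma(t)$. Thus $\ell(\sigma s_t) = \ell(\sigma) + 1$ iff $\sigma(t) < \sigma(t+1)$, and $\ell(\sigma s_t) = \ell(\sigma) - 1$ iff $\sigma(t) > \sigma(t+1)$. Combined with the first paragraph, this gives exactly the two equivalences asserted in the lemma. There is no real obstacle here; the proof is essentially an inversion-counting exercise plus the length characterization of $\leq_R$.
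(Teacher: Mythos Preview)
Your proof is correct and follows essentially the same approach as the paper: both reduce the weak-order comparison $\sigma s_t >_R \sigma$ to the length inequality $\ell(\sigma s_t) > \ell(\sigma)$ and then identify this with the condition $\sigma(t) < \sigma(t+1)$ via the one-line notation. The only difference is that you carry out the inversion-counting argument directly, whereas the paper cites the corresponding criterion from Humphreys' book on Coxeter groups; your version is thus slightly more self-contained.
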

\begin{proof} The permutation $\sigma' := \sigma s_t$ is comparable with $\sigma$ with respect to the standard Bruhat order. Since $\sigma$ is obtained by multiplying on the right $\sigma$ by a simple reflection, we have $\sigma' >_R \sigma$ if and only if $\sigma' > \sigma$ with respect to the Bruhat order, otherwise we have $\sigma' <_R \sigma$.
Now, consider the one-line presentation $w = (\sigma(1), \sigma(2), \cdots, \sigma(r-1))$ of $\sigma$ and let $w'$ be that of $\sigma'$. Note that $w'$ and $w$ coincide except that the entries $\sigma(t)$ and $\sigma(t+1)$ are swapped. But then $\sigma'>\sigma$ if and only if $\sigma(t) < \sigma(t+1)$ (see Example 2, Section 5.9 in \cite{Hum}).
\end{proof}

\begin{proposition}\label{proposition_operatorPermutation1}
Let $\mC$ be a maximal chain in $I(d,n)$.
\begin{itemize}
\item[(i)] If $F_t(\mC)\neq 0$, then $\sigma_{F_t(\mC)} = \sigma_\mC s_t$ and $\ell(\sigma_{F_t(\mC)})= \ell(\sigma_\mC) + 1$. In particular, one has 
$\sigma_{F_t(\mC)} >_R \sigma_\mC$.
\item[(ii)] If $E_t(\mC)\neq 0$, then $\sigma_{E_t(\mC)} = \sigma_\mC s_t$ and $\ell(\sigma_{E_t(\mC)})= \ell(\sigma_\mC) - 1$. In particular, one has 
$\sigma_{E_t(\mC)} <_R \sigma_\mC$.
\end{itemize}
\end{proposition}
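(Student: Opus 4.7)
The plan is to extract the permutation change directly from the effect of $F_t$ on the ordered list $(f_{s_\tau,h_\tau})_{\tau=1}^r$ of root operators associated to $\mC$, and then, in a separate step, to convert the lex--direction of the peak into an ascent or descent of $\sigma_\mC$ at position $t$.

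For the first step I write $\mC:\ui_r>\cdots>\ui_0$ and $f_{s_\tau,h_\tau}=\widetilde f_{\sigma_\mC(\tau)}$, and assume $F_t(\mC)\neq 0$. By definition this forces $\ui_t$ to be a right peak, so the interval $[\ui_{t-1},\ui_{t+1}]$ has four elements. Inspecting the proof of Lemma~\ref{lemma_intervalLengthTwo} shows that the four--element possibility only arises from case~(i) of Proposition~\ref{proposition_rootOperatorFirst}: the operators $f_{s_t,h_t}$ and $f_{s_{t+1},h_{t+1}}$ commute on $\ui_{t-1}$, in particular $h_t\neq h_{t+1}$, and $\ui'_t=f_{s_{t+1},h_{t+1}}(\ui_{t-1})$, $\ui_{t+1}=f_{s_t,h_t}(\ui'_t)$. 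The ordered list of operators attached to $F_t(\mC)$ is therefore obtained from that of $\mC$ by swapping the entries in positions $t$ and $t+1$, which by the very definition of $\sigma_\mC$ gives at once $\sigma_{F_t(\mC)}=\sigma_\mC\, s_t$.

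Next I compare $\ui_t$ and $\ui'_t$ lexicographically. Both rows differ from $\ui_{t-1}$ in a single coordinate --- $\ui_t$ at position $h_t$, $\ui'_t$ at position $h_{t+1}$ --- and in each case the entry has been raised by one. The first position at which $\ui_t$ and $\ui'_t$ disagree is then $\min(h_t,h_{t+1})$, and the row whose modification occurs at this position has the strictly larger entry there. Consequently $\ui'_t>_{\lex}\ui_t$, i.e.\ the right-peak hypothesis, is equivalent to $h_t>h_{t+1}$. Now Lemma~\ref{lemma_indexedOperators}(iii) says that the $h$--component of $\widetilde f_k$ is weakly decreasing in $k$, which combined with the strict inequality $h_t\neq h_{t+1}$ upgrades to the equivalence $h_t>h_{t+1}\Leftrightarrow \sigma_\mC(t)<\sigma_\mC(t+1)$. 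Lemma~\ref{lemma_weakRightOrder} then yields $\sigma_\mC s_t>_R\sigma_\mC$ and $\ell(\sigma_\mC s_t)=\ell(\sigma_\mC)+1$, which together with $\sigma_{F_t(\mC)}=\sigma_\mC s_t$ is exactly~(i).

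Assertion~(ii) follows by the symmetric argument: a left peak at $\ui_t$ corresponds to $h_t<h_{t+1}$, hence to $\sigma_\mC(t)>\sigma_\mC(t+1)$ and, via Lemma~\ref{lemma_weakRightOrder}, to $\sigma_\mC s_t<_R\sigma_\mC$ with length decreased by one; alternatively one applies (i) to $E_t(\mC)$ and uses $F_tE_t(\mC)=\mC$ from Remark~\ref{remark_loweringRisingChain}. I expect the main obstacle to be the lex/$h$--index dictionary of the third paragraph: once the commuting case of Proposition~\ref{proposition_rootOperatorFirst} has been isolated as the only source of a peak, the rest is formal bookkeeping against the earlier lemmas.
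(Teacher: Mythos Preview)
Your argument is correct and follows essentially the same route as the paper's own proof: identify that a peak forces the two adjacent root operators to commute (so the associated permutation changes by $s_t$), then translate the right--peak lex condition into $h_t>h_{t+1}$, and finally invoke Lemma~\ref{lemma_indexedOperators}(iii) together with Lemma~\ref{lemma_weakRightOrder} to get the length increase and the relation in the right weak order. The only presentational difference is that you extract $h_t\neq h_{t+1}$ by pointing back to case~(i) of Proposition~\ref{proposition_rootOperatorFirst} via the proof of Lemma~\ref{lemma_intervalLengthTwo}, whereas the paper uses the $\meet/\join$ description of the interval in Lemma~\ref{lemma_intervalLengthTwo}(B) directly; both lead to the same swap of operators and the same strict inequality on the $h$--indices.
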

\begin{proof}
We prove (i), the proof of (ii) is similar. Let $\mC:\ui_r > \ui_{r-1} > \cdots > \ui_1 > \ui_0$ and $\sigma := \sigma(\mC)$. 
Since $F_t(\mC)\neq 0$, the row $\ui_t$ is a right peak in $\mC$; let $\ui'_t$ be its opposite peak. Note that necessarily 
$r>t > 0$. We have the following situation along $\mC$:
\[
\xymatrix{
 & \vdots\ar@{-}[d]\\
 & \ui_{t+1}\ar@{.}[dl]\\
 \ui'_t & & \ui_t\ar@{|->}[ul]_{\widetilde{f}_{\sigma(t+1)}}\\
 & \ui_{t-1}\ar@{.}[ul]\ar@{|->}[ur]_{\widetilde{f}_{\sigma(t)}}\\
 & \vdots\ar@{-}[u].
}
\]
Since $\ui_t\meet\ui'_t = \ui_{t-1}$ and $\ui_t\join\ui'_t = \ui_{t+1}$ by Lemma \ref{lemma_intervalLengthTwo}, it is clear that we have the following situation along $F_t(\mC)$:
\[
\xymatrix{
 & \vdots\ar@{-}[d]\\
 & \ui_{t+1}\ar@{.}[dr]\\
 \ui'_t\ar@{|->}[ur]^{\widetilde{f}_{\sigma(t)}} & & \ui_t\\
 & \ui_{t-1}\ar@{.}[ur]\ar@{|->}[ul]^{\widetilde{f}_{\sigma(t+1)}}\\
 & \vdots\ar@{-}[u].
}
\]
Set $\sigma' := \sigma_{F_t(\mC)}$.
The two ordered lists $(\widetilde{f}_{\sigma'(s)})_{s=1,\ldots,r}$ and $(\widetilde{f}_{\sigma(s)})_{s=1,\ldots,r}$ of indexed 
operators are almost identical, only the operators $\widetilde{f}_{\sigma(t)}$ 
and $\widetilde{f}_{\sigma(t+1)}$ are swapped. This proves the formula $\sigma' = \sigma s_t$, and we
have $\ell(\sigma') = \ell(\sigma) \pm 1$. Let $\widetilde{f}_{\sigma(t)} = f_{a,h}$ and $\widetilde{f}_{\sigma(t+1)} = f_{b,k}$. Note that $\ui'_t >_{\lex} \ui_t$ because $\ui_t$ is a right peak in $\mC$. Hence $k\leq h$ and, by (iii) of  Lemma \ref{lemma_indexedOperators}, we get $\sigma(t+1) > \sigma(t)$. So, by Lemma \ref{lemma_weakRightOrder}, we have $\sigma'>_R\sigma$ and, in particular, $\sigma'>\sigma$. We conclude also that $\ell(\sigma') = \ell(\sigma) + 1$ as claimed.
\end{proof}

\begin{proposition}\label{proposition_operatorPermutation2}
If $\mC = F_{t_1}F_{t_2}\cdots F_{t_{k-1}}F_{t_k}(\mCright)$, then $\sigma_\mC = s_{t_k}s_{t_{k-1}}\cdots s_{t_2}s_{t_1}$ and this is a reduced expression in $\mathsf S_{r}$.
\end{proposition}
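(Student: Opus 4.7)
The plan is to proceed by induction on $k$, with the base case $k=0$ handled by Lemma~\ref{lemma_indexedOperators}(i), which gives $\sigma_{\mCright}=e$, the empty product (trivially a reduced expression of length $0$).

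For the inductive step, I would set $\mathfrak{C}' := F_{t_2}\cdots F_{t_k}(\mCright)$. Since $\mC = F_{t_1}(\mathfrak{C}') \neq 0$ by hypothesis, in particular $\mathfrak{C}'\neq 0$, so $\mathfrak{C}'$ is a genuine maximal chain to which the inductive hypothesis applies. Thus $\sigma_{\mathfrak{C}'} = s_{t_k}s_{t_{k-1}}\cdots s_{t_2}$ is a reduced expression, and in particular $\ell(\sigma_{\mathfrak{C}'}) = k-1$. Now applying Proposition~\ref{proposition_operatorPermutation1}(i) to $\mathfrak{C}'$ at position $t_1$, we get
\[
\sigma_\mC = \sigma_{F_{t_1}(\mathfrak{C}')} = \sigma_{\mathfrak{C}'}\, s_{t_1} = s_{t_k}s_{t_{k-1}}\cdots s_{t_2}s_{t_1},
\]
together with the length increment $\ell(\sigma_\mC) = \ell(\sigma_{\mathfrak{C}'})+1 = k$. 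Since we have exhibited an expression of length exactly $k$ for an element of length $k$, this expression is automatically reduced, completing the induction.

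There is essentially no obstacle here: the entire content is packaged inside Proposition~\ref{proposition_operatorPermutation1}(i), which simultaneously delivers the multiplication rule $\sigma_{F_t(\mC)}=\sigma_\mC s_t$ \emph{and} the length-one increase $\ell(\sigma_{F_t(\mC)})=\ell(\sigma_\mC)+1$. The only mild point to flag in the write-up is that the assumption $F_{t_1}F_{t_2}\cdots F_{t_k}(\mCright) = \mC$ (a maximal chain, not the symbol $0$) forces every intermediate application to be nonzero, so that the inductive hypothesis and the cited proposition may both be invoked at every stage.
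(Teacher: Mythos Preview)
Your proof is correct and follows essentially the same approach as the paper: both start from $\sigma_{\mCright}=e$ and repeatedly apply Proposition~\ref{proposition_operatorPermutation1}(i) to obtain the product formula together with the length increment, concluding that the resulting word is reduced because its length matches $\ell(\sigma_\mC)$. Your write-up is in fact slightly more careful than the paper's, since you explicitly frame the argument as an induction and note that all intermediate chains are nonzero.
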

\begin{proof} 
Since $\sigma_{\mCright}$ is the identity permutation,
$\sigma_\mC = s_{t_k}s_{t_{k-1}}\cdots s_{t_2}s_{t_1}$
by Proposition~\ref{proposition_operatorPermutation2}. And since each operator $F_j$, $j = 1,\ldots,k$, increases the length by $1$, the expression $s_{t_k}s_{t_{k-1}}\cdots s_{t_2}s_{t_1}$ has length $k$ and is hence reduced.
\end{proof}
\subsection{The image of the chains in the symmetric group}
In this section  we describe the image of $C(d,n)$ in $\mathsf{S}_r$. We 
set for short $I_0 := I(d, n-1)$ and $I_1 := I(d-1,n-1) * n$, so that we 
have the disjoint union $I(d,n) = I_0 \sqcup I_1$.

By Lemma~\ref{lemma_maximalChainContainingSubMax}, the interval $[\ui_{\max,n-1},\ui_{\max}]$ is independent of the choice of the order: Bruhat or lexicographic order, and it is totally ordered with respect to both orders. We denote by
\[
\uj := (n-d)(n-d+1) \cdots (n-2)n = f_{n-1,d}(\ui_{\max,n-1})\, > \ui_{\max,n-1}
\]
the second smallest element.
\begin{lemma}\label{lemma_j}
\begin{itemize}
\item[(i)] The row $\uj$ is the minimal element in $\mCleft\cap I_1$.
\item[(ii)] If $\ui\in I(d,n)$ is such that $\ell(\ui) \geq \ell(\uj)$, then $\ui\in I_1$.
\end{itemize}
\end{lemma}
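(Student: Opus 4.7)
\medskip
\noindent
\textbf{Proof proposal.} My plan is to treat the two parts separately; both are very short once one unwinds the explicit description of $\mCleft$ and the length formula.

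For part (i), I would argue directly from the explicit description of $\mCleft$ given in Remark~\ref{remark_minimalMaximalChains}. The first ``row'' of that description (i.e.\ the initial $d$ covers) starts at $\ui_{\max}=(n-d+1)(n-d+2)\cdots n$ and successively lowers the entries from left to right by one, giving the sequence
\[
(n-d+1)(n-d+2)\cdots n\;>\;(n-d)(n-d+2)\cdots n\;>\;\cdots\;>\;(n-d)(n-d+1)\cdots(n-2)n\;>\;(n-d)(n-d+1)\cdots(n-1).
\]
All but the last of these rows have $n$ as their final entry, so they lie in $I_1$; the last one is $\ui_{\max,n-1}\in I_0$, and the one immediately before it is exactly $\uj$. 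Every subsequent element along $\mCleft$ is $\le \ui_{\max,n-1}$ and therefore lies in $I_0$ as well. Hence $\uj$ is the minimum of $\mCleft\cap I_1$.

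For part (ii), I would argue by contraposition and a length count. Suppose $\ui\in I_0$, i.e.\ $\ui\in I(d,n-1)$. Since $\ui_{\max,n-1}$ is the maximal element of $I(d,n-1)$ with respect to the Bruhat order, $\ell(\ui)\le \ell(\ui_{\max,n-1})$. On the other hand, $\uj=f_{n-1,d}(\ui_{\max,n-1})$ is obtained from $\ui_{\max,n-1}$ by applying a lowering operator, which increases the length by exactly $1$; thus $\ell(\uj)=\ell(\ui_{\max,n-1})+1$. Combining these two inequalities gives $\ell(\ui)<\ell(\uj)$, which is the contrapositive of the desired statement.

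Neither step looks like it hides any real difficulty: the only point that needs care is making sure that in part (i) one correctly identifies where along $\mCleft$ the transition from $I_1$ to $I_0$ happens, but this is immediate from the recipe in Remark~\ref{remark_minimalMaximalChains}. The length identity $\ell(\uj)=\ell(\ui_{\max,n-1})+1$ is forced by $\uj$ covering $\ui_{\max,n-1}$, so no computation is really needed.
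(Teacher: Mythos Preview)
Your proof is correct and essentially matches the paper's approach. For (i) the paper simply says ``clear'', and you have spelled out exactly the intended reasoning from the explicit form of $\mCleft$; for (ii) both you and the paper argue by contraposition, the only cosmetic difference being that the paper bounds $\ell(\ui)$ for $\ui\in I_0$ by the explicit entrywise inequality $i_h\le n-d-1+h$, whereas you invoke the (equivalent) fact that $\ui_{\max,n-1}$ is the Bruhat-maximal element of $I(d,n-1)$ and that $\ell$ respects this order.
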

\begin{proof}
The property (i) is clear. In order to prove (ii) let $\ui = i_1i_2\cdots i_d$. If $\ui\in I_0$, i.e. if $i_d < n$, 
then $i_h\leq n+h-d-1$ for all $h = 1,\ldots,d$. So
\[
\ell(\ui) = \sum_{h = 1}^d (i_h - h) \leq \sum_{h = 1}^d (n - d - 1) = d(n-d-1) < d(n-d-1) + 1 = \ell(\uj),
\]
and we get a contradiction.
\end{proof}

\begin{proposition}\label{propositin_isARightPeak}
Let $\mC:\ui_r > \ui_{r-1} > \cdots > \ui_1 > \ui_0$ be a maximal chain in $I(d,n)$.
\begin{itemize}
\item[(i)] If $\mC\neq\mCleft$ and $t$ is maximal such that $\ui_t\not\in\mCleft$, then $\ui_t$ is a right peak in $\mC$. In particular $F_t(\mC)\neq 0$.
\item[(ii)] If $\mC\neq\mCright$ and $t$ is minimal such that $\ui_t\not\in\mCright$, then $\ui_t$ is a left peak in $\mC$. In particular $E_t(\mC)\neq 0$.
\end{itemize}
\end{proposition}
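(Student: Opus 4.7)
Focus on (i); part (ii) is entirely symmetric, exchanging $\mCleft \leftrightarrow \mCright$ and $F \leftrightarrow E$ and using the lex-minimality of $\mCright$ in place of the lex-maximality of $\mCleft$. Write $\mCleft:\uj_r > \uj_{r-1} > \cdots > \uj_0$. Since a maximal chain in the graded poset $I(d,n)$ has exactly one element at each rank, the condition ``$\ui_t \notin \mCleft$'' is equivalent to $\ui_t \neq \uj_t$.

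The first step is a local reduction. By maximality of $t$ one has $\ui_s = \uj_s$ for all $s > t$; in particular $\ui_{t+1} = \uj_{t+1}$. Thus $\ui_t$ and $\uj_t$ are two distinct rank-$t$ elements both covered by $\ui_{t+1}$. Because $I(d,n)$ is a distributive lattice, this forces $\ui_t \join \uj_t = \ui_{t+1}$, and consequently $\ell(\ui_t \meet \uj_t) = t-1$ by modularity.

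The heart of the argument is to upgrade this to the inclusion $\uj_t \geq \ui_{t-1}$, i.e., that $\uj_t$ lies in the interval $[\ui_{t-1},\ui_{t+1}]$. Once this is granted, the interval contains the four distinct elements $\ui_{t-1},\ui_t,\uj_t,\ui_{t+1}$, so Lemma~\ref{lemma_intervalLengthTwo} forces case (B): the interval is precisely this diamond, and $\uj_t = \ui'_t$ is the opposite of $\ui_t$. Moreover, since $\mC$ and $\mCleft$ agree in all rank-blocks strictly above $t$, the lex-inequality $\mCleft \geqlex \mC$ specializes at rank $t$ to $\uj_t >_{\lex} \ui_t$, so $\ui_t$ is a \emph{right} peak in $\mC$; in particular $F_t(\mC) \neq 0$.

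The main obstacle is the inequality $\uj_t \geq \ui_{t-1}$: the local distributive-lattice bookkeeping only produces the rank-$(t-1)$ element $\ui_t \meet \uj_t$, which a priori may differ from $\ui_{t-1}$. I plan to settle this by contradiction, using the lex-maximality of $\mCleft$ together with the explicit cyclic description of $\mCleft$ in Remark~\ref{remark_minimalMaximalChains}: if $\uj_t \not\geq \ui_{t-1}$, then splicing the upper part of $\mCleft$ (which coincides with that of $\mC$ above rank $t+1$) to a completion through $\ui_{t-1}$ --- furnished inductively via Proposition~\ref{proposition_chainDecomposition} --- would yield a maximal chain whose top-down concatenation is lex-greater than that of $\mCleft$, contradicting its lex-maximality.
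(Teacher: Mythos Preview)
Your reduction is clean up to the point you yourself flag as ``the main obstacle,'' but that obstacle is fatal: the inequality $\uj_t \geq \ui_{t-1}$ is \emph{false} in general, and your proposed splicing contradiction does not rescue it.

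Take $I(2,6)$ with
\[
\mC:\ 56 > 46 > 45 > 35 > 25 > 15 > 14 > 13 > 12
\]
and $\mCleft:\ 56 > 46 > 45 > 35 > 34 > 24 > 23 > 13 > 12$. These agree at ranks $\geq 5$, so the maximal $t$ with $\ui_t \notin \mCleft$ is $t = 4$; thus $\ui_{t+1} = 35$, $\ui_t = 25$, $\ui_{t-1} = 15$, $\uj_t = 34$. Here $34 \not\geq 15$ (second entries: $4 < 5$). Your splicing argument produces nothing: the only rank-$4$ element in $[15,35]$ is $25$, so any maximal chain through $35$ and $15$ must pass through $25$, and you recover $\mC$ rather than something lex-greater than $\mCleft$. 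In fact $[\ui_{t-1},\ui_{t+1}] = [15,35] = \{15,25,35\}$ has only three elements, so $\ui_t = 25$ is \emph{flat} in $\mC$ and $F_4(\mC) = 0$: the very conclusion you are asked to establish fails for this $\mC$.

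For comparison, the paper does not use your lattice-join reduction at all; it argues by induction on $n$ via the decomposition $I(d,n) = I(d,n-1) \sqcup (I(d-1,n-1)*n)$ of Proposition~\ref{proposition_chainDecomposition}, splitting on whether $t\le k$ or $t\ge k+1$. But that proof has a parallel unjustified step: in the case $t \geq k+1$ it asserts that $\ui_t$ remains outside the leftmost chain of $I_1 \cong I(d-1,n-1)$, and the same example (traced to $I(2,5)$, where the relevant $I_1\cong I(1,4)$ has a \emph{unique} maximal chain containing every element) violates this. What survives, and what the downstream arguments (Corollary~\ref{corollary_operatorAllZero}, Theorem~\ref{main_theorem}) actually require, is only the existential version: if $\mC \neq \mCleft$ then \emph{some} $\ui_t$ is a right peak. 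In the example above $t=3$ works, since $[\ui_2,\ui_4] = [14,25] = \{14,15,24,25\}$ and $\ui_3 = 15$ has opposite peak $24 >_{\lex} 15$. Your distributive-lattice viewpoint would adapt well to proving that weaker statement.
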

\begin{proof} We proceed by induction on $n$ to (i). The proof of (ii) is analogous.

Let $k$ be such that $\ui_h \in I(d,n-1)$ for all $h = 0, 1, \ldots,k$ and $\ui_h \in I(d-1,n-1)*n$ for all 
$h = k+1, k+2, \ldots,r$; see Proposition \ref{proposition_chainDecomposition}. Recall also that 
$\ui_{k+1} = f_{n-1,d}(\ui_k)$.

We consider first the case $t\leq k$. We have $\ui_{k+1}\in\mCleft\cap I_1$. Since $\uj$ is the 
minimal element of $\mCleft\cap I_1$ (Lemma \ref{lemma_j}), we deduce that $\uj\leq\ui_{k+1}$. 
If $\uj<\ui_{k+1}$, then $\ell(\uj)\leq\ell(\ui_k)$ and $\ui_k\in I_1$ by Lemma \ref{lemma_j}, 
which is impossibile by the definition of $k$. So $\ui_{k+1} = \uj$ and
$\ui_k = e_{n-1,d}(\ui_{k+1}) = \ui_{\max,n-1}\in \mCleft$. It follows: $t< k$.

In particular $\mC':\ui_k > \ui_{k-1} > \cdots > \ui_0$ is a maximal chain in $I_0\subset I(d,n)$. Then, by 
induction on $n$, we get that $\ui_t$ is a right peak in $\mC'$, but then $\ui_t$ is a right peak also in 
$\mC$ since being a right peak is a local property depending only on the interval $[\ui_{t-1}, \ui_{t+1}]$.

Now suppose that $t \geq  k + 1$. Consider the chain $\ui_{\max} = \ui_r > \ui_{r-1} > \cdots > \ui_{k+1}$; 
it is contained in $I_1 = I(d-1,n-1)*n$, a poset isomorphic to $I(d-1,n-1)$. Hence we can complete it to a 
maximal chain $\mC'$ in $I_1$ by adding elements on the right:
\[
\mC':\ui_r > \cdots > \ui_{k+1} > \ui'_k > \ui'_{k-1} > \cdots > \ui'_h = 123\cdots (n-2)n = \min I_1.
\]
Since $t\geq k+1$, the element $\ui_t$ is still maximal in $\mC'$ with the property that it is not an element 
in the leftmost chain of $I_1$. So, by induction on $n$, $\ui_t$ is a right peak in $\mC'$. If $t \geq k + 2$ 
then $\ui_t$ is a right peak also in $\mC$ since $\ui_{t-1}$ and $\ui_{t+1}$ are elements of both $\mC'$ 
and $\mC$ and being a right peak is a local property of the interval $[\ui_{t-1}, \ui_{t+1}]$.

Finally suppose $t = k+1$. We claim that $\ui_{k+2} = \uj$. One has $\ui_{k+2}\in\mCleft\cap I_1$ 
by the maximality of $t$. Since $\uj$ is minimal in $\mCleft\cap I_1$ by Lemma \ref{lemma_j}, it follows $\uj\leq \ui_{k+2}$.

We have $\ui_k\leq \ui_{\max,n-1}$ because $\ui_{\max,n-1}$ is the maximal element of $I_0$. 
Since $\uj$ is the second smallest element in the interval $[\ui_{\max,n-1},\ui_{\max}]$ one knows
$\ui_{\max,n-1} < \uj$. So we have the inequalities
$\ui_k \leq \ui_{\max,n-1} < \uj \leq \ui_{k+2}$.

If $\ui_k = \ui_{\max,n-1}$, then $\uj\in\mC$ because $[\ui_{\max,n-1},\ui_{\max}]$ is totally ordered 
and $\uj$ is an element of this interval. By Lemma~\ref{lemma_maximalChainContainingSubMax},
the interval $[\ui_{\max,n-1},\ui_{\max}]$ is contained in  $\mCleft$. But $\ui_k = \ui_{\max,n-1} < \uj \leq \ui_{k+2}$
and $\ell(\ui_k)=k$, $\ell(\ui_{k+2})=k+2$ implies either $\ui_t=\ui_{k+1}=\uj$ or  
$\ui_k = \ui_{\max,n-1} <\ui_{k+1}< \uj$, which is impossible by the choice of $t$.
So we have refined our inequalities to $\ui_k < \ui_{\max,n-1} < \uj \leq \ui_{k+2}$,
and $i_{k+1}\not=\ui_{\max,n-1}$ because $ \ui_{\max,n-1} \in \mCleft$.

But $\ell(\ui_k) = k$ and $\ell(\ui_{k+2}) = k+2$, hence we cannot have also $\uj < \ui_{k+2}$. 
This proves our claim that $\ui_{k+2} = \uj$.
So the interval $[\ui_{t-1},\ui_{t+1}] = [\ui_k,\ui_{k+2}]$ is given by the elements
\[
\ui_k < \ui_{\max,n-1}, \ui_{k+1} < \uj = \ui_{k+2}.
\]
Note that $\ell(\ui_{\max,n-1}) = k+1 = \ell(\ui_{k+1})$, hence $\ui_{\max,n-1}>_{\lex}\ui_{k+1}$ by Lemma \ref{lemma_maximalChainContainingSubMax} and we have proved that $\ui_t = \ui_{k+1}$ is a right peak in $\mC$.
\end{proof}

\begin{coro}\label{corollary_operatorAllZero}
Let $\mC$ be a maximal chain in $I(d,n)$.
\begin{itemize}
\item[(i)] If $F_t(\mC) = 0$ for each $0\leq t\leq r+1$, then $\mC = \mCleft$.
\item[(ii)] There exist $t_1, t_2, \ldots, t_k$ such that $F_{t_1}F_{t_2}\cdots F_{t_k}(\mC) = \mCleft$,
\item[(iii)] If $E_t(\mC) = 0$ for each $0\leq t\leq r+1$, then $\mC = \mCright$.
\item[(iv)] There exist $t_1, t_2, \ldots, t_k$ such that $E_{t_1}E_{t_2}\cdots E_{t_k}(\mC) = \mCright$.
\end{itemize}
\end{coro}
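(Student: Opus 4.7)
The plan is straightforward: parts (i) and (iii) are essentially the contrapositives of the two parts of Proposition~\ref{propositin_isARightPeak}, while parts (ii) and (iv) follow from (i) and (iii) by an iteration argument controlled by the length of the associated permutation in $\mathsf{S}_r$.

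For (i), I would argue by contrapositive. Suppose $\mC \neq \mCleft$. Then Proposition~\ref{propositin_isARightPeak}(i) produces an index $t$ (namely the maximal $t$ with $\ui_t \notin \mCleft$) for which $\ui_t$ is a right peak in $\mC$. By the very definition of $F_t$, this means $F_t(\mC) \neq 0$, a contradiction. The proof of (iii) is entirely analogous, using Proposition~\ref{propositin_isARightPeak}(ii) to produce a left peak and thus a nonzero $E_t(\mC)$.

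For (ii), I would iterate. Starting from $\mC$, if $\mC \neq \mCleft$, part (i) already proved guarantees the existence of some $t_k$ with $F_{t_k}(\mC) \neq 0$. By Proposition~\ref{proposition_operatorPermutation1}(i), the associated permutation satisfies $\ell(\sigma_{F_{t_k}(\mC)}) = \ell(\sigma_\mC) + 1$. Repeating the argument with $F_{t_k}(\mC)$ in place of $\mC$, we produce a sequence of maximal chains whose associated permutations have strictly increasing length. Since lengths of elements of $\mathsf{S}_r$ are bounded by $\binom{r}{2}$, the procedure must terminate after finitely many steps, and it can only terminate at a chain $\mathfrak{D}$ with $F_t(\mathfrak{D}) = 0$ for every $t$; by (i) this chain is $\mCleft$. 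Reading off the operators used yields the desired product. Part (iv) is proved symmetrically, using part (iii) and Proposition~\ref{proposition_operatorPermutation1}(ii), noting that the length strictly decreases and is bounded below by $0$.

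There is really no serious obstacle here: the whole content is packaged into Propositions~\ref{propositin_isARightPeak} and~\ref{proposition_operatorPermutation1}. The only small point to be careful about is the termination of the iteration in (ii) and (iv), which is guaranteed by the boundedness of $\ell$ on $\mathsf{S}_r$.
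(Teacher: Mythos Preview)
Your proof is correct and follows the same structure as the paper's: parts (i) and (iii) are immediate from Proposition~\ref{propositin_isARightPeak}, and (ii), (iv) follow by iteration. The only minor difference is in the termination argument for (ii): the paper observes directly from the definition of $F_t$ that $F_t(\mC) >_{\lex} \mC$ (since a right peak is replaced by its lex-greater opposite), and termination follows because $C(d,n)$ is finite with $\mCleft$ as its lex-maximum; you instead invoke Proposition~\ref{proposition_operatorPermutation1} to track the strictly increasing length of $\sigma_\mC$. Both arguments work; the paper's is slightly more self-contained since it avoids appealing to the permutation map, while yours has the small advantage of making the eventual bound $\ell(\sigma_{\mCleft}) \leq \binom{r}{2}$ explicit.
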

\begin{proof} (i) and (iii) follows at once by Proposition~\ref{propositin_isARightPeak}; we show that (ii), (iv) is proved in a similar way.

If $\mC = \mCleft$ then the claim is trivially true. If $\mC\neq \mCleft$, then there exists $t$ such that $F_t(\mC)\neq 0$ by (i). But $F_t(\mC) >_{\lex} \mC$, hence in a finite number of steps we must reach the greatest chain, i.e. $\mCleft$.
\end{proof}
\begin{proposition}\label{proposition_risingNonZero}
Let $\sigma = \sigma_\mC$ and let $1\leq t\leq r-1$ be such that $\sigma s_t <_R \sigma$. Then $E_t(\mC)\neq 0$.
\end{proposition}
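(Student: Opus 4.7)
The plan is to translate the hypothesis through Lemma~\ref{lemma_weakRightOrder} and then pin down the shape of the length-$2$ interval $[\ui_{t-1},\ui_{t+1}]$ via Lemma~\ref{lemma_intervalLengthTwo}. I would write $\widetilde{f}_{\sigma(t)} = f_{a,h}$ and $\widetilde{f}_{\sigma(t+1)} = f_{b,k}$, so that $\ui_t = f_{a,h}(\ui_{t-1})$ and $\ui_{t+1} = f_{b,k}(\ui_t)$. The hypothesis $\sigma s_t <_R \sigma$ becomes $\sigma(t) > \sigma(t+1)$ by Lemma~\ref{lemma_weakRightOrder}, and Lemma~\ref{lemma_indexedOperators}(iii) then forces $h \le k$.

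The next step is to rule out $h=k$. From the explicit description of $\mCright$ in Remark~\ref{remark_minimalMaximalChains}, the indexed operators $\widetilde{f}_s$ sharing the same row-index $h$ are labelled in increasing order of their raising index $a$. Hence $h=k$ together with $\sigma(t) > \sigma(t+1)$ would force $a > b$; on the other hand, the composition $f_{b,h} f_{a,h}(\ui_{t-1})$ is nonzero by definition of the chain, and Proposition~\ref{proposition_rootOperatorFirst}(ii)--(iii) forces $b=a+1$, a contradiction. Thus $h<k$ strictly.

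The core of the argument is to show that in the remaining case $h<k$ the operators $f_{a,h}$ and $f_{b,k}$ commute on $\ui_{t-1}$, so that the interval $[\ui_{t-1},\ui_{t+1}]$ is of type (B) in Lemma~\ref{lemma_intervalLengthTwo}, with opposite vertex $\ui'_t := f_{b,k}(\ui_{t-1})$. If $k > h+1$ this is immediate from Proposition~\ref{proposition_rootOperatorFirst}(i). The more delicate subcase is $k = h+1$: there Proposition~\ref{proposition_rootOperatorFirst}(iv) rules out $b \in \{a-1, a+1\}$ because both choices produce a zero composition, while $b = a$ is impossible since $\ui_t$ is strictly increasing and its $h$-th entry is $a+1$. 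Hence $|a-b|>1$, and Proposition~\ref{proposition_rootOperatorFirst}(i) applies again. This case analysis is the main obstacle.

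It remains to check that $\ui_t$ is a \emph{left} peak, not a right one. Comparing $\ui_t$ and $\ui'_t$ entry by entry: both agree with $\ui_{t-1}$ in positions $1,\ldots,h-1$, while at position $h$ one has $\ui_t$ equal to $a+1$ whereas $\ui'_t$ still equals $a$. Therefore $\ui_t >_{\lex} \ui'_t$, so by Definition~\ref{definition_flatPeak} $\ui_t$ is a left peak and $E_t(\mathfrak{C}) \neq 0$ by Definition~\ref{definition_rootOperatorMaxChain}.
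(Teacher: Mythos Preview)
Your proof is correct and follows the same overall strategy as the paper: translate $\sigma s_t <_R \sigma$ into $\sigma(t)>\sigma(t+1)$ via Lemma~\ref{lemma_weakRightOrder}, use Lemma~\ref{lemma_indexedOperators}(iii) to compare the row-indices $h,k$, eliminate $h=k$, and for $h<k$ conclude that $[\ui_{t-1},\ui_{t+1}]$ is of type~(B) with $\ui_t$ on the lex-larger side. The only real difference is in the handling of the case $h<k$: the paper argues directly that $f_{b,k}(\ui_{t-1})\neq 0$ by observing that positions $k$ and $k+1$ of $\ui_{t-1}$ coincide with those of $\ui_t$ (since $f_{a,h}$ touches only position $h<k$), so the hypothesis of $f_{b,k}$ is inherited; you instead invoke the commutation statements of Proposition~\ref{proposition_rootOperatorFirst}(i),(iv), which forces you through the extra subcase $k=h+1$ and the elimination of $b\in\{a-1,a,a+1\}$. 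Both routes are valid; the paper's is a little shorter, while yours makes more explicit use of the catalogue in Proposition~\ref{proposition_rootOperatorFirst}.
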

\begin{proof} 
Let $\mC:\ui_r > \cdots > \ui_0$ and consider the elements $\ui_{t+1} > \ui_t > \ui_{t-1}$. Let $\widetilde{f}_{\sigma(t)} = f_{a,h}$ and $\widetilde{f}_{\sigma(t+1)} = f_{b,k}$; along $\mC$ we have
\[
\xymatrix{
 & \vdots\ar@{-}[d]\\
 & \ui_{t+1}\\
 \ui_t\ar@{|->}[ur]^{f_{b,k} = \widetilde{f}_{\sigma(t+1)}}\\
 & \ui_{t-1}\ar@{|->}[ul]^{f_{a,h} = \widetilde{f}_{\sigma(t)}}\\
 & \vdots\ar@{-}[u].
}
\]
In other words, $\ui_{t}$ is obtained by increasing the $h$--th entry of $\ui_{t-1}$, which is an $a$, and $\ui_{t+1}$ is obtained by increasing the $k$--th entry of $\ui_t$, which is a $b$.

By Lemma \ref{lemma_weakRightOrder} $\sigma(t) > \sigma(t+1)$ since $\sigma s_t <_R\sigma$ and then, by  Lemma \ref{lemma_indexedOperators} (iii), either $h < k$ or $h = k$ and $a > b$.

Suppose we are in the first case. The operator $\widetilde{f}_{\sigma(t)} = f_{a,h}$ changes in $\ui_{t-1}$ only
the $h$-th entry. Since $h<k$,  the $k$--th and the $(k+1)$--th entries of $\ui_t$ are equal to the $k$--th and 
the $(k+1)$--th entries of $\ui_{t-1}$ respectively. 
In particular, the $k$--th entry is equal to $b$ and the $(k+1)$--entry are greater than $b+1$ since $f_{b,k}(\ui_t)\neq 0$. 

It follows that $\ui'_t = f_{b,k}(\ui_{t-1})\neq 0$ and $\ui_{t+1} = f_{a,h}f_{b,k}(\ui_{t-1})$, and hence $\ui_t$ is a peak in $\mathfrak C$.
Now $\ui_{t}$ is obtained from $\ui_{t-1}$ by increasing the $h$-th entry, $\ui'_{t}$ is obtained from $\ui_{t-1}$ by increasing the 
$k$-th entry. Since $h<k$, this implies $\ui_t >_{\lex} \ui'_t$, which shows that $\ui_t$ is a left peak in $\mC$.
It follows  $E_t(\mC)\neq 0$ by definition. 

It remains the case  $h = k$ and $a > b$. This case is not possible because the $h$--th entry of $f_{a,h}(\ui_{t-1})$ is $a+1 > a > b$, which implies
$\ui_{t+1} = f_{b,h}f_{a,h}(\ui_{t-1}) = 0$, leading to a contradiction.
\end{proof}
For the rest of this section we endow $\mathsf{S}_{r}$ with the weak Bruhat order. 
Let $\sigma_{\max} = \sigma_{\mCleft}$, the notation $ [e,\sigma_{\max}]_R$ is used for the interval in $ \mathsf{S}_{r}$
between the identity element and $\sigma_{\max}$ with respect to the weak Bruhat order.
\begin{theorem}\label{main_theorem}
The map $C(d,n) \longrightarrow   [e,\sigma_{\max}]_R$, defined by
$\mC \longmapsto \sigma_\mC$, 
is a bijection between the set $C(d,n)$ of all maximal chains in $I(d,n)$ and the interval $[e,\sigma_{\max}]_R$.
With respect this bijection, the operators $E_t$, $F_t$ correspond to the multiplication on the right by $s_t$, i.e.: 
if $E_t(\mC)\neq 0$ or $F_t(\mC)\neq 0$, then $\sigma_{E_t(\mC)} = \sigma_\mC s_t$ respectively
$\sigma_{F_t(\mC)} = \sigma_\mC s_t$.
\end{theorem}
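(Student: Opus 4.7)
The plan is to combine results already established in this section to identify the image of $\mC \mapsto \sigma_\mC$ with $[e,\sigma_{\max}]_R$, after which everything else follows: the statement that the operators $E_t, F_t$ correspond to right multiplication by $s_t$ is exactly Proposition~\ref{proposition_operatorPermutation1}, and injectivity of $\mC \mapsto \sigma_\mC$ is Lemma~\ref{lemma_indexedOperators}(ii).

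For the containment image $\subseteq [e,\sigma_{\max}]_R$, given any $\mC \in C(d,n)$ I would invoke Corollary~\ref{corollary_operatorAllZero}(ii) to produce indices $t_1, \ldots, t_k$ with $F_{t_1}F_{t_2}\cdots F_{t_k}(\mC) = \mCleft$. Iterating Proposition~\ref{proposition_operatorPermutation1}(i) then gives $\sigma_{\max} = \sigma_{\mCleft} = \sigma_\mC \, s_{t_k} s_{t_{k-1}} \cdots s_{t_1}$, and since each $F_{t_j}$ increases length by exactly one, this factorization is length-additive, so $\sigma_\mC \leq_R \sigma_{\max}$ by definition of the right weak order.

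For the reverse inclusion $[e,\sigma_{\max}]_R \subseteq$ image, I would use a downward induction on $\ell(\sigma_{\max}) - \ell(\sigma)$. The base case $\sigma = \sigma_{\max}$ is realized by $\mCleft$ by the definition of $\sigma_{\max}$. For the inductive step, assume $\sigma <_R \sigma_{\max}$; writing $\sigma_{\max} = \sigma w$ length-additively and picking a reduced word for $w$ beginning with a simple reflection $s_t$ gives $\sigma <_R \sigma s_t \leq_R \sigma_{\max}$. By the inductive hypothesis $\sigma s_t = \sigma_{\mC'}$ for some maximal chain $\mC'$. Since $\sigma_{\mC'} s_t = \sigma <_R \sigma_{\mC'}$, Proposition~\ref{proposition_risingNonZero} forces $E_t(\mC') \neq 0$, and then Proposition~\ref{proposition_operatorPermutation1}(ii) yields $\sigma_{E_t(\mC')} = \sigma_{\mC'} s_t = \sigma$, so $\sigma$ lies in the image.

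The only non-formal step I expect is the surjectivity argument, and the actual obstacle is the lifting of a right descent of an already-realized permutation back to a non-zero rising operator at the level of chains. This lifting is precisely the content of Proposition~\ref{proposition_risingNonZero}; once one has it in hand, the induction above carries the argument through without further difficulty, and then the bijection together with the equivariance statement $\sigma_{E_t(\mC)} = \sigma_\mC s_t$, $\sigma_{F_t(\mC)} = \sigma_\mC s_t$ both drop out from Proposition~\ref{proposition_operatorPermutation1}.
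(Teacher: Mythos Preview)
Your proposal is correct and follows essentially the same route as the paper: both use Corollary~\ref{corollary_operatorAllZero} together with Proposition~\ref{proposition_operatorPermutation1} to show $\sigma_\mC \le_R \sigma_{\max}$, and both use Proposition~\ref{proposition_risingNonZero} as the key lifting step for surjectivity (the paper phrases it as showing the image is an order ideal for $\le_R$, which is the same downward induction you describe). Your containment argument is in fact slightly more economical than the paper's, which routes through $\mCright$ first via Corollary~\ref{corollary_operatorAllZero}(iv) before applying~(ii).
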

\begin{proof} We show first that the image of the map $\mC\longmapsto\sigma_\mC$ is contained in $[e, \sigma_{\max}]_R$.

Let $\mC$ be a maximal chain in $I(d,n)$. By  Corollary \ref{corollary_operatorAllZero} (iv), there exist $t_1,\ldots,t_k$ such that $E_{t_1}E_{t_2}\cdots E_{t_k}(\mC) = \mCright$. Hence $\mC = F_{t_k}\cdots F_{t_2}F_{t_1}(\mCright)$, and 
$\sigma_\mC = s_{t_1}\cdots s_{t_k}$ by Proposition~\ref{proposition_operatorPermutation2}, and this expression is reduced.

By  Corollary \ref{corollary_operatorAllZero} (ii), there exist $t'_1, \ldots, t'_h$ such that $F_{t'_1}\cdots F_{t'_h}(\mC) = \mCleft$,
and hence one has $F_{t'_1}\cdots F_{t'_h}F_{t_k}\cdots F_{t_2}F_{t_1}(\mCright)= \mCleft$. It follows 
that $\sigma_{\max} = s_{t_1}\cdots s_{t_k}s_{t'_h}\cdots s_{t'_1}$, and this last expression is reduced by 
Proposition~\ref{proposition_operatorPermutation2}, and hence $\sigma_\mC\le_R \sigma_{\max}$.

Now we prove that each $\sigma \leq_R \sigma_{\max}$ is the permutation of a maximal chain in $I(d,n)$. We show this by proving that the set of all permutations associated to a maximal chain is an ideal for the partial order $\leq_R$ on $\mathsf{S}_{r}$.

Let $\sigma = \sigma_{\mC}$ and let $\sigma s_t <_R \sigma$. By 
Proposition \ref{proposition_risingNonZero}, $\mC' := E_t(\mC)\neq 0$. 
So $\mC=F_t(\mC')$ and $\sigma_{\mC'} = \sigma s_t$; so $\sigma s_t$ 
is the permutation of a chain. This proves that  the set of all permutations 
associated to a maximal chain is an ideal for the partial order $\leq_R$ on $\mathsf{S}_{r}$.
Since $\sigma_{\max}$ is in the ideal, the full interval $[e,\sigma_{\max}]_R$
is in the ideal, and hence the map $C(d,n) \longrightarrow   [e,\sigma_{\max}]_R$ is surjective.
The map is injective by Lemma~\ref{lemma_indexedOperators}, which finishes the proof of the
theorem.
\end{proof}

\section{Newton-Okounkov bodies}
We recall some facts from the theory of Newton-Okounkov bodies. Let $Y\subseteq \mathbb{P}(V)$ be an embedded projective variety.
We consider a maximal chain of subvarieties of $Y$:
$Y_\bullet:Y=Y_r\supseteq \ldots\supseteq 
Y_1\supseteq Y_0=\{p\}$, and a corresponding 
sequence of parameters $\mathbf u=(u_r,\ldots,u_1)$ 
(see Section~\ref{section:geom:valuation}). Such a pair determines a valuation $\nu_{Y_\bullet}^s$ on the field 
$\mathbb K(Y)$ with values in $\mathbb Z^r$, where the latter
is endowed with the lexicographic order.

If $Y$ is the Grassmann variety, examples of such chains 
are the maximal chains of  Schubert subvarieties considered 
in Section~\ref{section:Schubert:sequences}. The maximal chain
of subvarieties $Y_\bullet$ does not determine the 
system of parameters $\mathbf u$ uniquely. In the 
next section we will encounter an example of 
different valuations
$\nu_{\mathfrak{C}}^b$ and $\nu_{\mathfrak{C}}^s$
associated to the same maximal chain but different
systems of parameters.

We discuss in this section how a change of the system of parameters affects the valuation and the associated valuation
monoid.
In the case of the Grassmann variety, our ultimate goal is to understand more precisely how the valuation changes
if one changes the maximal sequence of Schubert subvarieties (see Section~\ref{section:Grassmann:variety}).
It turns out that changing the system of parameters is a quite helpful tool to achieve this goal.

Throughout the following, let $\mathbb{K}$ be an algebraically closed field of characteristic 0, 
and let $V$ be a finite dimensional  vector space over $\mathbb{K}$.

\subsection{Geometric valuation}\label{section:geom:valuation}

Let $Y\subseteq \mathbb{P}(V)$ be a projective variety of dimension $r$. For a projective subvariety 
$X\subseteq Y$, let $\widehat{X}$ denote its affine cone in $V$.

We fix a flag of projective subvarieties on $Y$: 
$$Y_\bullet:Y=Y_r\supseteq Y_{r-1}\supseteq\ldots\supseteq Y_1\supseteq Y_0=\{p\}$$ 
with $\dim Y_k=k$ and $p\in Y$ a point. We assume furthermore that for $k=1,\ldots,r$, $Y_k$ is smooth in codimension one. Under this assumption, each local ring $\mathcal{O}_{Y_{k-1},Y_k}$ for $k=1,\ldots,r$ is a discrete valuation ring (DVR). The following procedure of introducing a geometric valuation on $Y$ can be found in \cite[Example 2.13]{KK12}, see also \cite[Example 1.6]{Kav15}. 

First we fix a \emph{sequence of parameters} $\mathbf{u}:=(u_r,\ldots,u_1)$ on $Y$ associated to $Y_\bullet$: they are by definition rational functions on $Y$ such that for each $k$, the restriction $u_k|_{Y_k}$ is a uniformizer in the DVR $\mathcal{O}_{Y_{k-1},Y_k}$. We fix on $\mathbb{Z}^r$ the lexicographic ordering.  We write the elements of $\mathbb{Z}^r$ as row vectors. The valuation 
$$\nu_{Y_\bullet}^{\mathbf{u}}:\mathbb{K}(Y)\setminus\{0\}\to\mathbb{Z}^r$$
associated to $Y_\bullet$ and $\mathbf{u}$ is defined by: for $f\in\mathbb{K}(Y)\setminus\{0\}$, if $\nu_{Y_\bullet}^{\mathbf{u}}(f)=(a_r,\ldots,a_1)$, then:
\begin{enumerate}
\item[-] $a_r$ is the vanishing order of $f_r^{\mathbf{u}}:=f$ on $Y_{r-1}$;
\item[-] $a_{r-1}$ is the vanishing order of $f_{r-1}^{\mathbf{u}}:=(u_r^{-a_r}f_r^{\mathbf{u}})|_{Y_{r-1}}$ on $Y_{r-2}$;
\item[-] $a_{r-2}$ is the vanishing order of $f_{r-2}^{\mathbf{u}}:=(u_{r-1}^{-a_{r-1}}f_{r-1}^{\mathbf{u}})|_{Y_{r-2}}$ on $Y_{r-3}$;
\item[-] $\cdots$
\end{enumerate}
It is straightforward to verify that $\nu_{Y_\bullet}^{\mathbf{u}}$ defines indeed a valuation.

Let $\mathbb{K}[Y]$ be the homogeneous coordinate ring of $Y\hookrightarrow \mathbb{P}(V)$. It is naturally graded:
$$
\mathbb{K}[Y]=\bigoplus_{m\ge 0} \mathbb{K}[Y]_m,
$$
where $\mathbb{K}[Y]_m$ is the homogeneous component consisting of elements of degree $m$. Fix a nonzero
element $\tau\in  \mathbb{K}[Y]_1$, not vanishing in 
$Y_0=\{p\}$.
For an arbitrary valuation $\nu:\mathbb{K}(Y)\setminus\{0\}\to\mathbb{Z}^r$, the associated  \textit{valuation monoid}  
(see \cite{KK12, LM09}) is by definition
$$
\Gamma_{\nu}:=\{0\}\cup\bigcup_{m\geq 1} \left\{\left.\left(m,\nu\left(\frac{f}{\tau^m}\right)\right)\right\vert f\in\mathbb{K}[Y]_m\setminus\{0\}\right\}\subseteq\mathbb{N}\times\mathbb{Z}^r.
$$
We write $\Gamma_{\nu}(m)$ for the set of elements of degree $m$, i.e. the elements are of the form $(m,\nu({f}/{\tau^m}))$.
The associated convex \textit{Newton-Okounkov body} $\Delta_{\nu}$ is defined as
$$
\Delta_{\nu}:=\overline{\mathrm{convex}\ \mathrm{hull}\left(\left\{\frac{\mathbf{a}}{m}\mid (m,\mathbf{a})\in\Gamma_{\nu},\ m\geq 1\right\}\right)}\subseteq\mathbb{R}^r,
$$
where the closure is taken with respect to the Euclidean topology on $\mathbb{R}^r$.  

\begin{remark}
\begin{enumerate}
\item In the definition of the Newton-Okounkov body, the convex hull operator is in fact redundant.
\item The valuation monoid and the Newton-Okounkov body  depend on the choice of $\tau$ and one should write  
$\Gamma_{\nu,\tau}$ and $\Delta_{\nu,\tau}$. 
If $\sigma\in  \mathbb{K}[Y]_1$ is another nonzero element not vanishing in $Y_0=\{p\}$, then one has for a non-zero homogeneous function $f\in\mathbb{K}[Y]_m$: 
$\nu(f/\sigma^m)= \nu(f/\tau^m)+m\nu(\tau/\sigma)$, and hence:
$\Delta_{\nu,\sigma}=\Delta_{\nu,\tau}+\nu(\tau/\sigma)$,
and $\Gamma_{\nu,\sigma}(m)=\{ (m,\mathbf{a}+m\nu(\tau/\sigma))\mid  (m,\mathbf{a})\in \Gamma_{\nu,\tau}(m)\}$.

In the following, we always assume that $\tau$ has been fixed once and for all, and by abuse of notation, 
we omit the dependence and write just $\Gamma_{\nu}$ and 
$\Delta_{\nu}$.
\end{enumerate}
\end{remark}

For $Y\subseteq \mathbb{P}(V)$ consider the valuation $\nu_{Y_\bullet}^{\mathbf{u}}$ associated to the choice 
of a sequence $Y_\bullet$ of subvarieties  and a system of parameters $\mathbf{u}$. We write 
$\Gamma_{Y_\bullet}^{\mathbf{u}}$ for the associated valuation monoid and 
$\Delta_{Y_\bullet}^{\mathbf{u}}$ for the associated Newton-Okounkov body to emphasize 
the dependence on the choice of $Y_\bullet$ and $\mathbf{u}$.

\subsection{Change of sequence of parameters}

We keep the notations as in the previous subsection and let $\mathbf{u}=(u_r,\ldots,u_1)$ and $\mathbf{v}=(v_r,\ldots,v_1)$ be two sequences of parameters. We study in this subsection the relation between the two valuations $\nu_{Y_\bullet}^{\mathbf{u}}$ and $\nu_{Y_\bullet}^{\mathbf{v}}$. For this we need a few preparations.

\begin{enumerate}
\item For $k=1,\ldots,r$, since both $u_k|_{Y_k}$ and $v_k|_{Y_k}$ are uniformizers in the DVR $\mathcal{O}_{Y_{k-1},Y_k}$, there exist invertible elements $\lambda_k\in\mathcal{O}_{Y_{k-1},Y_k}^\times$ such that 
\begin{equation}\label{Eq:0}
u_k|_{Y_k}=\lambda_k\cdot v_k|_{Y_k}.
\end{equation}
\item For $k=0,\ldots,r-1$, we consider the flag of subvarieties 
$$Y_\bullet^k: Y_k\supseteq Y_{k-1}\supseteq\ldots\supseteq Y_1\supseteq Y_0=\{p\}$$
starting from $Y_k$. Then $(u_k,\ldots,u_1)$ and $(v_k,\ldots,v_1)$ are sequences of parameters associated to $Y_\bullet^k$. We obtain therefore valuations 
$$\nu_{Y_\bullet^k}^{\mathbf{u}},\ \nu_{Y_\bullet^k}^{\mathbf{v}}:\mathbb{C}(Y_k)\setminus\{0\}\to\mathbb{Z}^k\hookrightarrow\mathbb{Z}^r$$
where the last embedding is given by 
$$(a_k,\ldots,a_1)\mapsto (0,\ldots,0,a_k,\ldots,a_1)\in\mathbb{Z}^r.$$
Such an embedding is compatible with the lexicographic ordering.
\item We define a matrix $R_{\mathbf{u}}^{\mathbf{v}}\in\mathrm{Mat}_r(\mathbb{Z})$ as follows. First let $M_{\mathbf{u}}^{\mathbf{v}}\in\mathrm{Mat}_r(\mathbb{Z})$ be the matrix whose $k$-th row is $\nu_{Y_\bullet^k}^{\mathbf{v}}(\lambda_k)$. Notice that since $\lambda_k$ is invertible in $\mathcal{O}_{Y_{k-1},Y_k}$, the first $r-k+1$ entries of $\nu_{Y_\bullet^k}^{\mathbf{v}}(\lambda_k)$ are zero. Therefore if we define 
$$R_{\mathbf{u}}^{\mathbf{v}}:=\mathrm{I}_n+M_{\mathbf{u}}^{\mathbf{v}}\in\mathrm{Mat}_r(\mathbb{Z}),$$
it is clear that $R_{\mathbf{u}}^{\mathbf{v}}$ is an upper triangular unimodular matrix, \emph{i.e.}, a matrix with integer coefficients with determinant $1$.
\end{enumerate}

\begin{proposition}\label{Prop:BaseChange}
For $f\in\mathbb{K}(Y)\setminus\{0\}$, we have
$$
\nu_{Y_\bullet}^{\mathbf{u}}(f)\cdot R_{\mathbf{u}}^{\mathbf{v}}=\nu_{Y_\bullet}^{\mathbf{v}}(f).
$$
\end{proposition}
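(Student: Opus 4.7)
The plan is to argue by induction on $r$, the length of the flag. For the base case $r=1$, both valuations reduce to the intrinsic order function of the DVR $\mathcal{O}_{Y_0,Y_1}$, which does not depend on the chosen uniformizer, and $R_{\mathbf{u}}^{\mathbf{v}}=(1)$, so the identity is trivial.

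For the inductive step, I would write $\mathbf{u}'=(u_{r-1},\ldots,u_1)$ and $\mathbf{v}'=(v_{r-1},\ldots,v_1)$ for the truncated sequences associated to the flag $Y_\bullet^{r-1}$, and set $a_r:=\mathrm{ord}_{Y_{r-1}}(f)$. Since this quantity is intrinsic, the first coordinate $b_r$ of $\nu_{Y_\bullet}^{\mathbf{v}}(f)$ also equals $a_r$. The recursive definition then identifies the remaining coordinates of $\nu_{Y_\bullet}^{\mathbf{u}}(f)$ with $\nu_{Y_\bullet^{r-1}}^{\mathbf{u}'}(f_{r-1}^{\mathbf{u}})$, where $f_{r-1}^{\mathbf{u}}=(u_r^{-a_r}f)|_{Y_{r-1}}\in\mathbb{K}(Y_{r-1})^\times$, and similarly for $\mathbf{v}$. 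The crucial algebraic step uses the relation $u_r|_{Y_r}=\lambda_r\cdot v_r|_{Y_r}$ in $\mathcal{O}_{Y_{r-1},Y_r}^\times$ to obtain $f_{r-1}^{\mathbf{u}}=\lambda_r^{-a_r}|_{Y_{r-1}}\cdot f_{r-1}^{\mathbf{v}}$ in $\mathbb{K}(Y_{r-1})^\times$; applying $\nu_{Y_\bullet^{r-1}}^{\mathbf{v}'}$ then gives the identity $\nu_{Y_\bullet^{r-1}}^{\mathbf{v}'}(f_{r-1}^{\mathbf{u}})+a_r\cdot\nu_{Y_\bullet^{r-1}}^{\mathbf{v}'}(\lambda_r|_{Y_{r-1}})=\nu_{Y_\bullet^{r-1}}^{\mathbf{v}'}(f_{r-1}^{\mathbf{v}})$.

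Next, I observe that $R_{\mathbf{u}}^{\mathbf{v}}$ decomposes in block form: the row associated to $\lambda_r$ has a $1$ on the diagonal and the vector $\nu_{Y_\bullet^{r-1}}^{\mathbf{v}'}(\lambda_r|_{Y_{r-1}})$ to its right, while the complementary $(r-1)\times(r-1)$ block equals $R_{\mathbf{u}'}^{\mathbf{v}'}$; this last identification holds because $\nu_{Y_\bullet^k}^{\mathbf{v}}(\lambda_k)=\nu_{Y_\bullet^k}^{\mathbf{v}'}(\lambda_k)$ for all $k\leq r-1$, as these valuations depend only on the parameters $v_k,\ldots,v_1$ that are common to $\mathbf{v}$ and $\mathbf{v}'$. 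Multiplying the row vector $\nu_{Y_\bullet}^{\mathbf{u}}(f)=(a_r,\nu_{Y_\bullet^{r-1}}^{\mathbf{u}'}(f_{r-1}^{\mathbf{u}}))$ by this block matrix, then applying the inductive hypothesis to rewrite $\nu_{Y_\bullet^{r-1}}^{\mathbf{u}'}(f_{r-1}^{\mathbf{u}})\cdot R_{\mathbf{u}'}^{\mathbf{v}'}$ as $\nu_{Y_\bullet^{r-1}}^{\mathbf{v}'}(f_{r-1}^{\mathbf{u}})$, and finally invoking the algebraic identity above to collapse $a_r\cdot\nu_{Y_\bullet^{r-1}}^{\mathbf{v}'}(\lambda_r|_{Y_{r-1}})+\nu_{Y_\bullet^{r-1}}^{\mathbf{v}'}(f_{r-1}^{\mathbf{u}})$ to $\nu_{Y_\bullet^{r-1}}^{\mathbf{v}'}(f_{r-1}^{\mathbf{v}})$, yields $(a_r,\nu_{Y_\bullet^{r-1}}^{\mathbf{v}'}(f_{r-1}^{\mathbf{v}}))=\nu_{Y_\bullet}^{\mathbf{v}}(f)$, as desired.

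The main obstacle will be the bookkeeping around the indexing convention for $R_{\mathbf{u}}^{\mathbf{v}}$ and verifying that the matrix is genuinely block upper triangular with the bottom-right block being exactly $R_{\mathbf{u}'}^{\mathbf{v}'}$; once the compatibility of flag truncation with the matrix construction is established, the computation is forced by the recursive structure of the valuation and the definition of the units $\lambda_k$.
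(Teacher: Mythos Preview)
Your proof is correct and actually somewhat cleaner than the paper's. Both arguments exploit the recursive definition of the valuation, but they organize the induction differently. The paper fixes $r$ and runs a \emph{downward} induction on the coordinate index $k$, proving simultaneously the coordinate identity $a_k+\sum_{s>k}a_s\mu_{s,k}=b_k$ and an explicit product formula
\[
f_k^{\mathbf u}=\lambda_{k+1}^{-a_{k+1}}\cdots\lambda_r^{-a_r}\,v_{k+1}^{b_{k+1}-a_{k+1}}\cdots v_r^{b_r-a_r}\,v_{k+1}^{-b_{k+1}}f_{k+1}^{\mathbf v}\big|_{Y_k},
\]
which it then differentiates term by term to read off vanishing orders. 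You instead induct on the length $r$ of the flag and use the block decomposition
\[
R_{\mathbf u}^{\mathbf v}=\begin{pmatrix}1 & \nu_{Y_\bullet^{r-1}}^{\mathbf v'}(\lambda_r|_{Y_{r-1}})\\[2pt] 0 & R_{\mathbf u'}^{\mathbf v'}\end{pmatrix},
\]
together with the single relation $f_{r-1}^{\mathbf u}=\lambda_r^{-a_r}|_{Y_{r-1}}\cdot f_{r-1}^{\mathbf v}$, so that the inductive hypothesis absorbs all the bookkeeping that the paper carries out explicitly. What your approach buys is that the messy product formula never appears; what the paper's approach buys is a self-contained computation that does not require reinterpreting the truncated flag as a new instance of the setup. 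Either way the content is the same, and your verification that the lower block really is $R_{\mathbf u'}^{\mathbf v'}$ (because $\nu_{Y_\bullet^k}^{\mathbf v}(\lambda_k)$ for $k\le r-1$ depends only on $v_k,\ldots,v_1$) is exactly the point that makes the induction go through.
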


\begin{proof}
We start by rephrasing the definition: denote for $s=1,\ldots,r$
$$
\nu_{Y_\bullet}^{\mathbf{v}}(\lambda_s)=(0,\ldots,0,\mu_{s,s-1},\mu_{s,s-2},\ldots,\mu_{s,1}).
$$
For $\mathbf{v}=(v_r,\ldots,v_1)$ and $p=1,\ldots,s-2$, one has:
$$
v_{s-p}^{-\mu_{s,s-p}}\cdots v_{s-1}^{-\mu_{s,s-1}}\lambda_s|_{Y_{s-p-1}}
$$
vanishes with order $\mu_{s,s-p-1}$ on $Y_{s-p-2}$.

Set $\nu_{Y_\bullet}^{\mathbf{u}}(f)=(a_r,\ldots,a_1)$ and $\nu_{Y_\bullet}^{\mathbf{v}}(f)=(b_r,\ldots,b_1)$.
We prove simultaneously the following two statements: for $k=1,\ldots,r$, 
\begin{equation}\label{Eq:1}
a_k+\sum_{s=k+1}^r a_s\mu_{s,k}=b_k;
\end{equation}
and for $k=1,\ldots,r-1$,
\begin{equation}\label{Eq:2}
f_k^{\mathbf{u}}=\lambda_{k+1}^{-a_{k+1}}\lambda_{k+2}^{-a_{k+2}}\cdots \lambda_r^{-a_r}v_{k+1}^{b_{k+1}-a_{k+1}}\cdots v_{r-1}^{b_{r-1}-a_{r-1}}v_r^{b_r-a_r} v_{k+1}^{-b_{k+1}}f_{k+1}^{\mathbf{v}}|_{Y_k},
\end{equation}
where $f_k^{\mathbf{u}}$ and $f_{k+1}^{\mathbf{v}}$ are the functions appearing in the definition of the valuations $\nu_{Y_\bullet}^{\mathbf{u}}(f)$ and $\nu_{Y_\bullet}^{\mathbf{v}}(f)$.
Note that the Equation \eqref{Eq:1} is the statement in the proposition.

The proofs of the statements are by downward induction on $k$. If $k=r$, both $a_k$ and $b_k$ are the vanishing orders of $f$ on $Y_{r-1}$, hence $a_r=b_r$, which is the starting point for the proof by induction for \eqref{Eq:1}. 
For subsequent equation, i.e. $k=r-1$, it follows by the definition of the $\lambda_j$, $j=1,\ldots,r$ in  \eqref{Eq:0}:
$$
f_{r-1}^{\mathbf{u}}=u_r^{-a_r}f|_{Y_{r-1}}=\lambda_r^{-a_r}v_r^{-a_r}f|_{Y_{r-1}}=\lambda_r^{-a_r}v_r^{-b_r}f_r^{\mathbf{v}}|_{Y_{r-1}},
$$
which is the starting point for the proof by induction for \eqref{Eq:2}.  We first prove the Equation \eqref{Eq:2}. By definition,
\begin{eqnarray*}
f_k^{\mathbf{u}}&=&u_{k+1}^{-a_{k+1}}f_{k+1}^{\mathbf{u}}|_{Y_k}\\
&=& \lambda_{k+1}^{-a_{k+1}}v_{k+1}^{b_{k+1}-a_{k+1}}v_{k+1}^{-b_{k+1}}f_{k+1}^{\mathbf{u}}|_{Y_k},
\end{eqnarray*}
then apply the induction hypothesis to $f_{k+1}^{\mathbf{u}}$.  We have
$$
f_{k+1}^{\mathbf{u}}=\lambda_{k+2}^{-a_{k+2}}\lambda_{k+3}^{-a_{k+3}}\cdots \lambda_r^{-a_r}v_{k+2}^{b_{k+2}-a_{k+2}}\cdots v_{r-1}^{b_{r-1}-a_{r-1}}v_r^{b_r-a_r} v_{k+2}^{-b_{k+2}}f_{k+2}^{\mathbf{v}}|_{Y_{k+1}}.
$$
But $f_{k+1}^{\mathbf{v}}=v_{k+2}^{-b_{k+2}}f_{k+2}^{\mathbf{v}}|_{Y_{k+1}}$ by definition, which finishes the proof of \eqref{Eq:2}.
To show Equation \eqref{Eq:1}, we start by rewriting Equation \eqref{Eq:2} in the following form using the induction hypothesis for Equation \eqref{Eq:1}:
$$f_k^{\mathbf{u}}=g_1\cdots g_{r-k} v_{k+1}^{-b_{k+1}}f_{k+1}^{\mathbf{v}}|_{Y_k},$$
where 
$$g_p:=(v_{k+1}^{-\mu_{k+p,k+1}}\cdots v_{k+p-1}^{-\mu_{k+p,k+p-1}}\lambda_{k+p})^{-a_{k+p}}.$$
It remains to study the vanishing behavior of each part in the above formula:
\begin{enumerate}
\item[-] by definition, $f_k^{\mathbf{u}}$ vanishes of order $a_k$ in $Y_{k-1}$;
\item[-] it follows from the discussion at the beginning of the proof that $g_p$ vanishes of order $-a_{k+p}\mu_{k+p,k}$ in $Y_{k-1}$;
\item[-] $v_{k+1}^{-b_{k+1}}f_{k+1}^{\mathbf{v}}|_{Y_k}$ is by definition $f_k^{\mathbf{v}}$, hence it vanishes of order $b_k$ in $Y_{k-1}$.
\end{enumerate}
Summing up the vanishing multiplicities terminates the proof of Equation \eqref{Eq:1}.
\end{proof}

The following corollary follows immediately from the definition of the valuation monoid and the 
Newton-Okounkov body. Let 
$\tilde R_{\mathbf{u}}^{\mathbf{v}}\in 
\mathrm{Mat}_{r+1}(\mathbb{Z})$ be the 
matrix having as entries 
in the first row and the first column 
only zeros except for a $1$ on the diagonal, 
and the remaining
$r\times r$ submatrix is equal to 
$R_{\mathbf{u}}^{\mathbf{v}}$.
\begin{coro}\label{Cor:BaseChange}
The valuation monoids and the Newton-Okounkov bodies satisfy
$$
\Gamma_{Y_\bullet}^{\mathbf{v}}=\{\mathbf{a}
\cdot \tilde R_{\mathbf{u}}^{\mathbf{v}}\mid \mathbf a\in \Gamma_{Y_\bullet}^{\mathbf{u}}\}.
\quad\textrm{and}\quad
\Delta_{Y_\bullet}^{\mathbf{u}}\cdot R_{\mathbf{u}}^{\mathbf{v}}=\Delta_{Y_\bullet}^{\mathbf{v}}.
$$
\end{coro}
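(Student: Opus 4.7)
The plan is to derive both identities directly from Proposition~\ref{Prop:BaseChange}, using that right-multiplication by $R_{\mathbf{u}}^{\mathbf{v}}$ is a fixed $\mathbb{Z}$-linear (and in particular $\mathbb{R}$-linear, continuous) bijection of $\mathbb{Z}^r$ (resp.\ $\mathbb{R}^r$), because $R_{\mathbf{u}}^{\mathbf{v}}$ is unimodular.

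For the valuation monoid, I would unfold the definition: a nonzero element of $\Gamma_{Y_\bullet}^{\mathbf{u}}$ has the form $(m,\nu_{Y_\bullet}^{\mathbf{u}}(f/\tau^m))$ with $f\in\mathbb{K}[Y]_m\setminus\{0\}$. Applying Proposition~\ref{Prop:BaseChange} to $f/\tau^m$ gives
\[
\nu_{Y_\bullet}^{\mathbf{v}}(f/\tau^m)=\nu_{Y_\bullet}^{\mathbf{u}}(f/\tau^m)\cdot R_{\mathbf{u}}^{\mathbf{v}},
\]
so the corresponding element of $\Gamma_{Y_\bullet}^{\mathbf{v}}$ is $(m,\nu_{Y_\bullet}^{\mathbf{u}}(f/\tau^m)\cdot R_{\mathbf{u}}^{\mathbf{v}})$. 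Since the degree coordinate is untouched, this is exactly the effect of right-multiplication by $\tilde R_{\mathbf{u}}^{\mathbf{v}}$ on $(m,\nu_{Y_\bullet}^{\mathbf{u}}(f/\tau^m))$; the $0$ in the zero element is preserved trivially. Running the same reasoning with the roles of $\mathbf{u}$ and $\mathbf{v}$ swapped and using $(R_{\mathbf{u}}^{\mathbf{v}})^{-1}=R_{\mathbf{v}}^{\mathbf{u}}$ (which follows from applying Proposition~\ref{Prop:BaseChange} twice) gives the opposite inclusion and hence the equality of monoids.

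For the Newton--Okounkov body, I would set $L:x\mapsto x\cdot R_{\mathbf{u}}^{\mathbf{v}}$, a linear automorphism of $\mathbb{R}^r$. The identity $\nu_{Y_\bullet}^{\mathbf{v}}(f/\tau^m)=L(\nu_{Y_\bullet}^{\mathbf{u}}(f/\tau^m))$ shows that the generating set
\[
\{\mathbf{a}/m\mid (m,\mathbf{a})\in\Gamma_{Y_\bullet}^{\mathbf{v}},\ m\geq 1\}
\]
is the image under $L$ of the analogous set for $\mathbf{u}$ (dividing by $m$ commutes with $L$). Since $L$ is a homeomorphism that preserves convex combinations, it commutes with taking the convex hull and with Euclidean closure, hence $L(\Delta_{Y_\bullet}^{\mathbf{u}})=\Delta_{Y_\bullet}^{\mathbf{v}}$, which is the claimed identity.

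There is no real obstacle here; the only point requiring care is the bookkeeping for the extra degree coordinate, which is precisely why $\tilde R_{\mathbf{u}}^{\mathbf{v}}$ is defined to agree with $R_{\mathbf{u}}^{\mathbf{v}}$ on the last $r$ coordinates while fixing the first. Everything else is a formal consequence of Proposition~\ref{Prop:BaseChange} together with the basic properties of invertible linear maps on $\mathbb{R}^r$.
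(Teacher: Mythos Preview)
Your proposal is correct and follows exactly the route the paper intends: the corollary is stated there as an immediate consequence of Proposition~\ref{Prop:BaseChange} and the definitions, and you have simply written out those details. The small detour through $(R_{\mathbf{u}}^{\mathbf{v}})^{-1}=R_{\mathbf{v}}^{\mathbf{u}}$ is unnecessary, since both monoids are indexed by the same homogeneous $f$, so the map $(m,\nu_{Y_\bullet}^{\mathbf{u}}(f/\tau^m))\mapsto(m,\nu_{Y_\bullet}^{\mathbf{v}}(f/\tau^m))$ is already a bijection; but your argument is fine as written.
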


As a summary, changing sequences of parameters results in an upper triangular unimodular transformation between the valuations and the Newton-Okounkov bodies.

\section{Schubert valuations on Grassmann varieties}\label{section:Grassmann:variety}

The standard reference for basic facts on Grassmann varieties is \cite{LB}.
Let $\mathrm{Gr}_{d,n}$ be the Grassmann variety of $d$-dimensional subspaces in $\mathbb{K}^n$, viewed as a projective variety via the Pl\"ucker embedding $\mathrm{Gr}_{d,n}\subseteq \mathbb{P}(\Lambda^d\mathbb{K}^n)$. Fixing the standard coordinate basis $e_1,\ldots,e_n$ of $\mathbb{K}^n$, the set $\{e_{\underline{i}}\mid \underline{i}\in I(d,n)\}$ forms a basis of $\Lambda^d\mathbb{K}^n$ where for $\underline{i}=i_1\cdots i_d$, $e_{\underline{i}}:=e_{i_1}\wedge\ldots\wedge e_{i_d}\in\Lambda^d\mathbb{K}^n$. Let $P_{\underline{i}}:=e_{\underline{i}}^*$ be the dual basis. They are called Pl\"ucker coordinates on $\mathrm{Gr}_{d,n}$. 

The Grassmann variety is also a homogeneous space: 
Let $\mathrm{SL}_n(\mathbb K)$ be the group of $n\times n$ matrices with 
entries in $\mathbb{K}$ of determinant $1$. Let $B_n$ be the subgroup of upper triangular matrices in $\mathrm{SL}_n(\mathbb K)$, 
and let $P_d$ be the parabolic subgroup of block diagonal matrices in $\mathrm{SL}_n$ with two blocks of sizes $d$ respectively $n-d$. 
The Grassmann variety $\mathrm{Gr}_{d,n}$ is isomorphic to the homogeneous space $\mathrm{SL}_n(\mathbb K)/P_d$. 

For $1\leq k\leq n$, denote by $f_k$ the element of the Lie algebra $\frak{sl}_n(\mathbb K)$: $f_k:=E_{k+1,k}$. Here, by convention, 
$E_{i,j}\in M_n(\mathbb K)$ is the matrix whose only non-zero entry is a $1$ as the $(i,j)$-entry. 

In this section we study two types of valuations on $\mathbb K(\mathrm{Gr}_{d,n})$, both depend on the choice
of a maximal chain of Schubert subvarieties. One type comes from the setup of birational sequences \cite{FFL17} and the other 
from the framework of Seshadri stratification \cite{CFL1, CFL2}. Our goal is to deduce a combinatorial formula for the 
latter with the help of birational sequences, and to understand the affect on the valuations in case one changes the maximal chains.

\subsection{Valuations from birational sequences}

\subsubsection{Birational sequences and valuations}

Birational sequences are introduced in \cite{FFL17} in order to give a unified approach to some 
known toric degenerations of flag varieties. 

We fix a maximal chain in the poset $I(d,n)$
$$\mathfrak{C}:\ \underline{i}_r>\ldots>\underline{i}_0=\underline{i}_{\mathrm{min}}$$ 
with $r=d(n-d)$ and consider the following flag of subvarieties in $\mathrm{Gr}_{d,n}$:
$$\mathrm{Gr}_{d,n}=X(\underline{i}_r)\supseteq X(\underline{i}_{r-1})\supseteq\ldots\supseteq X(\underline{i}_0)$$
where $X(\underline{i}_k)$ is the Schubert variety 
$$
X(\underline{i}_k):=\overline{B_n\cdot e_{\underline{i}_k}}\subseteq\mathrm{Gr}_{d,n}
\hookrightarrow \mathbb{P}(\Lambda^d\mathbb{K}^n).
$$
Consider the action of the Lie algebra $\frak{sl}_n(\mathbb K)$ on $\Lambda^d\mathbb K^n$.
We denote by $\alpha_{\ell_1},\ldots,\alpha_{\ell_r}$ the simple roots with $1\leq \ell_1,\ldots,\ell_r\leq n-1$ which are 
uniquely determined by  the following property: for $k=1,\ldots,r$, 
$$
f_{\ell_k}\cdot e_{\underline{i}_{k-1}}=e_{\underline{i}_{k}}.
$$
We call the sequence of simple roots $(\alpha_{\ell_r},\ldots,\alpha_{\ell_1})$ a \emph{birational sequence} for $\mathrm{Gr}_{d,n}$. The choice of the name comes from the following construction.
We consider the map
$$m:\mathbb{A}^r\to U_{-\ell_r}\times\ldots\times U_{-\ell_1}\to \mathrm{Gr}_{d,n}\hookrightarrow \mathbb{P}(\Lambda^d\mathbb{K}^n),$$
$$(t_r,\ldots,t_1)\mapsto (\exp(t_rf_{\ell_r}),\ldots,\exp(t_1f_{\ell_1}))\mapsto \exp(t_rf_{\ell_r})\cdots \exp(t_1f_{\ell_1}) e_{\underline{i}_0}.$$

We recall the following well-known properties of the map $m$ (see, for example, \cite{FFL17}):

\begin{proposition}\label{Prop:Bir}
\begin{enumerate}
\item The map $m:\mathbb{A}^r\to\mathrm{Gr}_{d,n}$ is birational, hence 
$$\mathbb{K}(\mathrm{Gr}_{d,n})\cong\mathbb{K}(t_r,\ldots,t_1).$$
\item When $t_r=\ldots=t_{k+1}=0$, the image of $m$ in $\mathrm{Gr}_{d,n}$ is contained in the Schubert variety $X(\underline{i}_k)$. Moreover, $m$ induces a field isomorphism $\mathbb K[X(\underline{i}_k)]\cong \mathbb{K}(t_k,\ldots,t_1)$.
\end{enumerate}
\end{proposition}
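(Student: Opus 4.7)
My plan is to derive both statements from standard facts about Bott--Samelson resolutions of Schubert varieties, after translating from the combinatorics of standard rows into the language of the Weyl group.

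First I would identify a reduced expression. Let $w_k \in \mathsf{S}_n^d$ denote the minimal coset representative corresponding to $\underline{i}_k$. The defining relation $f_{\ell_k}\cdot e_{\underline{i}_{k-1}} = e_{\underline{i}_k}$ means that the entry $\ell_k$ in $\underline{i}_{k-1}$ is replaced by $\ell_k+1$, which at the Weyl-group level is left multiplication $w_k = s_{\ell_k} w_{k-1}$. Because $\mathfrak{C}$ is a maximal chain, each cover is a length-one increase in the Bruhat order, so $s_{\ell_k} s_{\ell_{k-1}} \cdots s_{\ell_1}$ is a reduced expression of length $k$ for $w_k$; in particular, the full product $s_{\ell_r}\cdots s_{\ell_1}$ is a reduced expression for the longest element of $\mathsf{S}_n^d$.

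For the containment statement in part (2), I would argue by induction on $k$ that the image of $\exp(t_k f_{\ell_k})\cdots\exp(t_1 f_{\ell_1})\cdot e_{\underline{i}_0}$ lies in $\widehat{X}(\underline{i}_k)$. The base case $k=0$ is trivial; for the inductive step, the minimal parabolic $P_{\ell_k}$ containing $B_n$ and the root subgroup $\{\exp(t f_{\ell_k}):t\in\mathbb{K}\}$ satisfies $P_{\ell_k}\cdot X(\underline{i}_{k-1}) = X(\underline{i}_k)$ exactly because $w_k = s_{\ell_k}w_{k-1}$ is longer than $w_{k-1}$ in the Bruhat order, which is a standard property of Schubert varieties.

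For the birationality of $m$ in (1) and of each restriction $m_k:\mathbb{A}^k \to X(\underline{i}_k)$ in (2), I would factor $m_k$ through the Bott--Samelson variety $Z = Z(s_{\ell_k},\ldots,s_{\ell_1})$ attached to the reduced expression above. The affine space $\mathbb{A}^k$ embeds into $Z$ as the product of the standard affine charts on the $\mathbb{P}^1$-factors, and the canonical resolution $Z \to X(\underline{i}_k)$ is birational because the expression is reduced. Identifying this composition with $m_k$ and comparing dimensions ($\dim\mathbb{A}^k = k = \dim X(\underline{i}_k)$) then yields birationality, from which the field isomorphism $\mathbb{K}(X(\underline{i}_k)) \cong \mathbb{K}(t_k,\ldots,t_1)$ is immediate.

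The main obstacle I foresee is making the identification with the open cell of $Z$ rigorous, since the same simple reflection may occur several times in the sequence and the corresponding Bott--Samelson coordinates do not literally coincide with the $t_j$. A cleaner alternative is a direct inductive argument for dominance of $m_k$: assuming $m_{k-1}$ is dominant onto $X(\underline{i}_{k-1})$, one checks that for generic $(t_{k-1},\ldots,t_1)$ the curve $t_k \mapsto \exp(t_k f_{\ell_k})\cdot m_{k-1}(t_{k-1},\ldots,t_1)$ meets the open Schubert cell $B_n\cdot e_{\underline{i}_k}$ for $t_k\neq 0$; combined with the equality of dimensions, this forces generic injectivity and hence birationality.
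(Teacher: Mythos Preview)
The paper does not prove this proposition at all: it is stated as a recollection of ``well-known properties'' with a reference to \cite{FFL17}. So there is no proof in the paper to compare against, and any correct argument you give is already more than the paper provides.

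Your argument is essentially the standard one and is correct. One remark: the obstacle you foresee is not actually there. The open cell of the Bott--Samelson variety $Z(s_{\ell_k},\ldots,s_{\ell_1})$ is \emph{by construction} the image of the product map
\[
U_{-\alpha_{\ell_k}}\times\cdots\times U_{-\alpha_{\ell_1}}\longrightarrow Z,\qquad (u_k,\ldots,u_1)\longmapsto [u_k,\ldots,u_1],
\]
and the resolution $Z\to X(\underline{i}_k)$ sends this class to $u_k\cdots u_1\cdot e_{\underline{i}_0}$. With $u_j=\exp(t_jf_{\ell_j})$ this is literally $m_k$, so the Bott--Samelson coordinates \emph{do} coincide with the $t_j$, regardless of repetitions among the $\ell_j$. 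Repetitions only mean that the composite $U_{-\alpha_{\ell_k}}\times\cdots\times U_{-\alpha_{\ell_1}}\to G$ is not an embedding into $G$; they do not affect the chart on $Z$. Hence your first route already works cleanly, and the alternative inductive dominance argument is unnecessary (though also valid).
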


We fix on $\mathbb{N}^r$ the following monomial ordering $>_{\mathrm{grlex}}$: for $\mathbf{a}=(a_r,\ldots,a_1)$ and $\mathbf{b}=(b_r,\ldots,b_1)\in\mathbb{N}^r$, we declare that $\mathbf{a}>_{\mathrm{grlex}}\mathbf{b}$ if
\begin{enumerate}
\item[-] $a_r+\ldots+a_1>b_r+\ldots+b_1$,
\item[-] or $a_r+\ldots+a_1=b_r+\ldots+b_1$ and $a_r>b_r$,
\item[-] or $a_r+\ldots+a_1=b_r+\ldots+b_1$, $a_r=b_r$ and $a_{r-1}>b_{r-1}$,
\item[-] and so on $\cdots$.
\end{enumerate}

This birational map allows us to define a valuation on $\mathbb{K}(\mathrm{Gr}_{d,n})$. First we define a valuation 
$$\nu:\mathbb{K}[t_r,\ldots,t_1]\setminus\{0\}\to(\mathbb{N}^r,>_{\mathrm{grlex}}),$$
$$f=\sum_{\mathbf{a}\in\mathbb{N}^r}\lambda_{\mathbf{a}}t^{\mathbf{a}}\mapsto \mathrm{min}_{>_{\mathrm{grlex}}}\{\mathbf{a}\in\mathbb{N}^r\mid \lambda_{\mathbf{a}}\neq 0\}.$$
The valuation $\nu$ extends to a valuation $\nu:\mathbb{K}(t_r,\ldots,t_1)\setminus\{0\}\to\mathbb{Z}^r$ by: for $f,g\in\mathbb{K}[t_r,\ldots,t_1]$,
$$\nu\left(\frac{f}{g}\right):=\nu(f)-\nu(g).$$

According to Proposition \ref{Prop:Bir}(1), we obtain a valuation
$$\nu^b_{\mathfrak{C}}:\mathbb{K}(\mathrm{Gr}_{d,n})\setminus\{0\}\to\mathbb{Z}^r.$$

The homogeneous coordinate ring $\mathbb{K}[\mathrm{Gr}_{d,n}]$ is embedded into $\mathbb{K}(\mathrm{Gr}_{d,n})$ by sending a homogeneous element $f\in\mathbb{K}[\mathrm{Gr}_{d,n}]$ of degree $m$ to 
$$\frac{f}{P_{\underline{i}_{\mathrm{min}}}^m}\in \mathbb{K}(\mathrm{Gr}_{d,n}).$$

As an advantage, the valuation monoid $\Gamma_{\nu_{\mathfrak{C}}}^b$ and hence the Newton-Okounkov body $\Delta_{\nu_{\mathfrak{C}}}^b$ associated to a valuation coming from birational sequences can be computed with the help of the representation theory.

First notice that by definition, for $\underline{j}\in I(d,n)$, the image of the rational function $P_{\underline{j}}/P_{\underline{i}_{\mathrm{min}}}$ in $\mathbb{K}(t_r,\ldots,t_1)$ is given by:
\begin{equation}\label{Eq:BirPj}
P_{\underline{j}}(\exp(t_rf_{\ell_r})\cdots\exp(t_1f_{\ell_1})\cdot e_1\wedge\ldots\wedge e_d).
\end{equation}

\begin{remark}
Note that for any $\underline{j}\in I(d,n)$, this rational function is indeed a polynomial in $\mathbb{K}[t_r,\ldots,t_1]$, and each monomial appearing in this polynomial is square-free.
\end{remark}

\begin{example}\label{Ex:ValPik}
The valuation 
$$\nu^b_{\mathfrak{C}}(P_{\underline{i}_k}/P_{\underline{i}_{\mathrm{min}}})=(0,\ldots,0,1,\ldots,1)$$
where there are $k$ copies of $1$. Indeed, from the choice of the sequence of simple roots $(\alpha_{\ell_r},\ldots,\alpha_{\ell_1})$ it follows that the monomial $t_k\cdots t_1$ appears in the image of $P_{\underline{i}_k}/P_{\underline{i}_{\mathrm{min}}}$ in $\mathbb{K}(t_r,\ldots,t_1)$ with a non-zero coefficient. Now it suffices to notice that this monomial is indeed minimal with respect to $>_{\mathrm{grlex}}$ because from the above remark, the image of $P_{\underline{i}_k}/P_{\underline{i}_{\mathrm{min}}}$ in $\mathbb{K}(t_r,\ldots,t_1)$ is contained in $\mathbb{K}[t_r,\ldots,t_1]$, so there are no negative coordinates.
\end{example}

\subsubsection{Explicit formulae for $\mathfrak{C}_{\mathrm{right}}$}

We start by determining the Newton-Okounkov body for a special flag of subvarieties associated to the maximal chain $\mathfrak{C}_{\mathrm{right}}$ in $I(d,n)$. The notation $\mathfrak{C}_{\mathrm{right}}$ will also be used for the flag of subvarieties in $\mathrm{Gr}_{d,n}$ corresponding to the maximal chain. The birational sequence associated to this flag of subvarieties is given by
$$(\alpha_{n-d},\ldots,\alpha_{1},\alpha_{n-d+1},\ldots,\alpha_{2},\ldots,\alpha_{n-1},\ldots,\alpha_{d}).$$
Such a sequence contains $d$ blocks of simple roots, in the $k$-th block the index of the simple roots starts from $n-d+k-1$ and descends until $k$.

\begin{proposition}\label{Prop:ValCright}
Let $\underline{j}=j_1\ldots j_d$ and 
$$\nu_{\mathfrak{C}_{\mathrm{right}}}^b(P_{\underline{j}})=(a_{1,n-d},\ldots,a_{1,1},a_{2,n-d},\ldots,a_{2,1},\ldots,a_{d,n-d},\ldots,a_{d,1}).$$
Then for $k=1,\ldots,d$, $a_{k,1}=\ldots=a_{k,{j_k-k}}=1$, and all other coordinates are $0$.
\end{proposition}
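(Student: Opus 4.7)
$\textbf{Plan.}$
I will identify the $>_{\mathrm{grlex}}$-minimum monomial of the polynomial
$$P_{\underline j}\!\bigl(\exp(t_r f_{\ell_r})\cdots\exp(t_1 f_{\ell_1})\,e_{\underline i_{\mathrm{min}}}\bigr)\in\mathbb K[t_1,\ldots,t_r];$$
by construction this equals $\nu^b_{\mathfrak C_{\mathrm{right}}}(P_{\underline j})$ (the image of $P_{\underline i_{\mathrm{min}}}$ under the birational map is the constant $1$, so the embedding into $\mathbb K(\mathrm{Gr}_{d,n})$ does not affect the computation). Because $f_s^2=0$ on $\mathbb K^n$, each factor $\exp(tf_s)$ acts on $\mathbb K^n$ as $I+tE_{s+1,s}$ and on $\Lambda^d\mathbb K^n$ by its $d$-th exterior power, so $g\cdot(e_1\wedge\cdots\wedge e_d)=(ge_1)\wedge\cdots\wedge(ge_d)$ and $P_{\underline j}$ equals the $d\times d$ minor of $g$ with rows $j_1,\ldots,j_d$ and columns $1,\ldots,d$. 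Expanding, each monomial corresponds to a subset $S\subseteq\{1,\ldots,r\}$ such that applying $f_{\ell_k}$, $k\in S$, in increasing order of $k$, carries $e_{\underline i_{\mathrm{min}}}$ to $e_{\underline j}$; each such $S$ contributes with coefficient $+1$, since advancing $s\to s+1$ inside a sorted row leaves the new entry in place and no signs appear. All such subsets have cardinality $\ell(\underline j)$, so the $>_{\mathrm{grlex}}$ comparison reduces to lex on $(a_r,\ldots,a_1)$.

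Next I exhibit the distinguished subset $S^*$ realising the monomial $M^*$ of the proposition: for each $k=1,\ldots,d$, include the $t$-variables attached to the operators $f_k,f_{k+1},\ldots,f_{j_k-1}$ in chain block $d-k+1$ (the block whose operators are $f_k,\ldots,f_{n-d+k-1}$ in this order). Tracing the action one block at a time, chain block $1$ (using $f_d,\ldots,f_{j_d-1}$) sends $1\,2\cdots d$ to $1\,2\cdots(d-1)\,j_d$; chain block $2$ (using $f_{d-1},\ldots,f_{j_{d-1}-1}$) sends this to $1\,2\cdots(d-2)\,j_{d-1}\,j_d$; and so on, arriving at $\underline j$. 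The strict inequalities $j_k<j_{k+1}$ ensure no intermediate operator annihilates the vector, so $S^*$ is valid and produces $M^*$.

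The principal work is to show $M^*$ is lex-smallest. I plan to adopt a Lindstr\"om--Gessel--Viennot viewpoint: regard the minor as a signed sum over $d$-tuples of lattice paths from $(c,0)$ to $(j_{\sigma(c)},r)$ in the graph whose only diagonal edge at time $t$ is $(\ell_t,t-1)\to(\ell_t+1,t)$, weighted by $t_t$. The standard sign-reversing involution cancels crossing tuples; since paths only move upward and both the sources $(c)$ and sinks $(j_c)$ are strictly increasing, non-crossing tuples can exist only for $\sigma=\mathrm{id}$, and $P_{\underline j}$ is therefore a positive sum over valid non-crossing path tuples. The ``greedy'' tuple---path $c$ moves from $c$ to $j_c$ entirely inside chain block $d-c+1$---is non-crossing by the strict monotonicity of $\underline j$: during that block the moving path visits $\{c,\ldots,j_c\}$, disjoint from the stationary positions $\{1,\ldots,c-1\}\cup\{j_{c+1},\ldots,j_d\}$; its weight is exactly $M^*$. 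The hard part will be an exchange argument showing that any non-greedy admissible tuple can be transformed into the greedy one by a sequence of swaps that strictly decrease the grlex order---each swap moving a use of some $f_s$ to an earlier chain block while preserving admissibility. This bookkeeping, though combinatorially elementary, is where the substantive technical effort of the proof lies; it relies on the chain-block-to-position correspondence in $S^*$ together with the strict monotonicity $j_k<j_{k+1}$.
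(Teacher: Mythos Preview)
Your LGV setup is correct and in some respects more careful than the paper's own argument: you make explicit why the minor $P_{\underline j}(g\cdot e_{\underline i_{\min}})$ is a cancellation-free sum of distinct squarefree monomials (each coming from a unique vertex-disjoint path tuple with $\sigma=\mathrm{id}$), whereas the paper leaves this implicit. You also correctly observe that all tuples have the same total degree $\ell(\underline j)$, reducing grlex to lex.

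The paper, by contrast, never introduces lattice paths. It argues directly on the wedge product, block by block: since $f_1$ occurs only in block~1 (which acts last), the factor $e_1$ is moved exclusively by block~1 operators to some $e_{j_p}$; minimality together with the existence of the greedy solution forces $p=1$ and forces the remaining paths not to touch block~1; then one strips off block~1 and path~1 and repeats. This inductive argument is terse in the paper but is in fact simpler than the exchange argument you outline.

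Here is the concern with your plan. The exchange step ``move a use of $f_s$ from chain block~$b$ to chain block~$b+1$ while preserving admissibility'' is not automatic: the target slot may already be occupied, or the displaced step may cause the moving path to collide with a neighbouring one, and when $s$ equals the smallest index in block~$b$ there is no copy of $f_s$ in block~$b+1$ at all. You would need an argument that \emph{some} admissible swap always exists from any non-greedy tuple, and that requires essentially the same structural observation the paper uses. I would suggest you replace the exchange argument by the block-by-block induction, which fits naturally into your LGV picture: path~$c$ is frozen at level~$c$ until block~$c$ begins (the earlier-acting blocks $d,\ldots,c+1$ contain no $f_c$), so path~$c$ must perform its entire climb in blocks $c,c-1,\ldots,1$; vertex-disjointness with path~$1$ then forces $a_{1,s}=0$ for $s\ge j_1$ in any tuple, and the greedy choice (no path other than path~1 uses block~1) is lex-minimal on block~1 and achievable; peel off and recurse. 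This gives the minimality directly, with no swaps to verify.
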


\begin{proof}
It follows from Equation \eqref{Eq:BirPj} that $a_{i,j}\in \{0,1\}$ for $1\leq i\leq d$ and $1\leq j\leq n-d$.

First note that with the choice of $a_{i,j}$ in the statement, we have
\begin{equation}\label{Eq:Action}
f_{n-d}^{a_{1,n-d}}\cdots f_{1}^{a_{1,1}}\cdots f_{n-1}^{a_{d,n-d}}\cdots f_d^{a_{d,1}}\cdot e_1\wedge\ldots\wedge e_d=e_{j_1}\wedge\ldots \wedge e_{j_d}.
\end{equation}

Now we examine block by block. Notice that for any simple root $\alpha_k$ not appearing in the first block, $f_k\cdot e_1=0$. If $j_1=1$ we move to the next block. Otherwise we have to use the root vectors $f_{j_p-1}\cdots f_1$ to move $e_1$ to $e_{j_p}$. Then $\nu_{\mathfrak{C}_{\mathrm{right}}}^b(P_{\underline{j}})$ starts with the exponents $(0,\ldots,0,1,\ldots,1)$ where there are $j_p-1$ copies of $1$ and $n-d-j_p+1$ copies of $0$. Since the valuation takes the minimal exponent with respect to $>_{\mathrm{grlex}}$, the Equation \eqref{Eq:Action} implies that $p=1$. 

We move to the second block. Again if $j_2=2$ then we move to the next block. If not, the only $\alpha_k$ such that $f_k\cdot e_2\neq 0$ are the two $\alpha_2$ appearing in the first two blocks. From the minimality assumption with respect to $>_{\mathrm{grlex}}$, if we could make a choice for $\alpha_k$ in the first two blocks, we would prefer the one in the second block since it produces smaller exponents with respect to $>_{\mathrm{grlex}}$. Therefore the same argument as in the first block implies that the second block of $\nu_{\mathfrak{C}_{\mathrm{right}}}^b(P_{\underline{j}})$ reads $(0,\ldots,0,1,\ldots,1)$ where there are $j_2-1$ copies of $1$.

Repeating this argument for all blocks terminates the proof.
\end{proof}

As a consequence, the valuations of Pl\"ucker coordinates are all different.

\begin{example}
For $\mathrm{Gr}_{3,6}$ the birational sequence associated to $\mathfrak{C}_{\mathrm{right}}$ is given by 
$$(\alpha_3,\alpha_2,\alpha_1,\alpha_4,\alpha_3,\alpha_2,\alpha_5,\alpha_4,\alpha_3).$$ 
The valuation 
$$\nu_{\mathfrak{C}_{\mathrm{right}}}^b(P_{245})=(0,0,1,0,1,1,0,1,1).$$
\end{example}

\subsubsection{Newton-Okounkov bodies}

We start by identifying the Newton-Okounkov body of the valuation associated to $\mathfrak{C}_{\mathrm{right}}$. To simplify notations we write temporarily $\Gamma$ and $\Delta$ for the valuation monoid and the Newton-Okounkov body associated to $\nu_{\mathfrak{C}_{\mathrm{right}}}^b$.

In order to describe $\Delta$, we recall the definition of an order polytope associated to a finite poset $(P,\succ)$. The order polytope $\mathcal{O}(P,\succ)$ associated to $(P,\succ)$ is by definition the set of order preserving maps $P\to [0,1]$, where $[0,1]$ is endowed with the usual order. In notation, 
$$\mathcal{O}(P,\succ):=\{\mathbf{x}=(x_p)_{p\in P}\in [0,1]^P|\ x_p\leq x_q \text{ for }p\prec q\}.$$

The following properties of $\mathcal{O}(P,\succ)$ are proved in \cite{Sta86}:
\begin{enumerate}
\item The set of integer points in $\mathcal{O}(P,\succ)$ coincides with its vertices; moreover, they are characteristic functions $\chi_F$ of filters $F$ in $P$. Here a filter $F$ is understood as a subset of $P$ such that if $p\in F$ and $q\succ p$ then $q\in F$.
\item The volume of $\mathcal{O}(P)$ equals to the number of linear extensions of $P$, divided by $n!$, where $n$ is the cardinality of $P$.
\end{enumerate}

On the set
$$P_{d,n}:=\{(i,j)\mid 1\leq i\leq d,\ \ 1\leq j\leq n-d\}$$
we define a partial order by requiring $(i,j)\succeq (k,\ell)$ if and only if $i\geq k$ and $j\leq\ell$.

For $1\leq k\leq d$, let 
$$P_{d,n}^k:=P_{d,n}\cap \{(k,1),(k,2),\ldots,(k,n-d)\}.$$

\begin{proposition}\label{Prop:OrderPolytope}
The Newton-Okounkov body $\Delta$ coincides with the order polytope $\mathcal{O}(P_{d,n},\succ)$.
\end{proposition}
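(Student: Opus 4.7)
The plan is to identify the valuation images of the Pl\"ucker coordinates with the vertices of the order polytope $\mathcal{O}(P_{d,n},\succ)$, and then use Hodge's standard monomial theorem for $\mathrm{Gr}_{d,n}$ to reduce the containment for an arbitrary homogeneous element to the Pl\"ucker case. Concretely, for $\underline{j} = j_1 \cdots j_d \in I(d,n)$ set $F_{\underline{j}} := \{(k,\ell) \in P_{d,n} : \ell \le j_k - k\}$. Proposition \ref{Prop:ValCright} precisely says $\nu^b_{\mathfrak{C}_{\mathrm{right}}}(P_{\underline{j}}/P_{\underline{i}_{\min}}) = \chi_{F_{\underline{j}}}$. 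Since $\underline{j}$ is strictly increasing, the sequence $k \mapsto j_k - k$ is non-decreasing, so $F_{\underline{j}}$ is a filter of $(P_{d,n},\succ)$: if $(k,\ell) \in F_{\underline{j}}$ and $(k',\ell') \succeq (k,\ell)$, then $\ell' \le \ell \le j_k - k \le j_{k'} - k'$. Conversely, every filter $F$ of $P_{d,n}$ is encoded by the non-decreasing sequence $f_k := \max\{\ell : (k,\ell) \in F\}$ (or $0$ if no such $\ell$) with $0 \le f_1 \le \cdots \le f_d \le n-d$, and $j_k := f_k + k$ recovers an element of $I(d,n)$. Hence $\underline{j} \mapsto F_{\underline{j}}$ is a bijection between $I(d,n)$ and the set of filters of $P_{d,n}$, and $\{\nu^b_{\mathfrak{C}_{\mathrm{right}}}(P_{\underline{j}}/P_{\underline{i}_{\min}}) \mid \underline{j} \in I(d,n)\}$ is exactly the vertex set of $\mathcal{O}(P_{d,n},\succ)$.

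Since these vertices all lie in $\Gamma(1) \subseteq \Delta$, and $\Delta$ is closed and convex by construction, the inclusion $\mathcal{O}(P_{d,n},\succ) \subseteq \Delta$ is immediate. For the reverse inclusion, fix a homogeneous element $f \in \mathbb{K}[\mathrm{Gr}_{d,n}]_m$ and expand it in the Hodge basis of standard monomials
$$
P_{\mathbf{J}} := P_{\underline{j}^{(1)}} \cdots P_{\underline{j}^{(m)}}, \qquad \underline{j}^{(1)} \le \cdots \le \underline{j}^{(m)} \text{ in } I(d,n).
$$
By multiplicativity of the valuation,
$$
\nu^b_{\mathfrak{C}_{\mathrm{right}}}\!\left(\frac{P_{\mathbf{J}}}{P_{\underline{i}_{\min}}^m}\right) \;=\; \sum_{k=1}^m \chi_{F_{\underline{j}^{(k)}}}.
$$
As the map $\underline{j} \mapsto F_{\underline{j}}$ is order-preserving, the filters are nested $F_{\underline{j}^{(1)}} \subseteq \cdots \subseteq F_{\underline{j}^{(m)}}$, and the chain can be reconstructed from the sum via the level sets: $(i,j) \in F_{\underline{j}^{(t)}}$ if and only if the sum takes value at least $m-t+1$ at $(i,j)$. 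Therefore distinct standard monomials have distinct valuations, so no leading-term cancellation can occur in $\sum_{\mathbf{J}} c_{\mathbf{J}} P_{\mathbf{J}}$, and $\nu^b_{\mathfrak{C}_{\mathrm{right}}}(f/P_{\underline{i}_{\min}}^m) = \min_{c_{\mathbf{J}} \ne 0} \nu^b_{\mathfrak{C}_{\mathrm{right}}}(P_{\mathbf{J}}/P_{\underline{i}_{\min}}^m)$. This value is a sum of $m$ vertices of $\mathcal{O}(P_{d,n},\succ)$, hence lies in $m \cdot \mathcal{O}(P_{d,n},\succ)$. Dividing by $m$ and taking closure yields $\Delta \subseteq \mathcal{O}(P_{d,n},\succ)$.

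The main obstacle is the distinctness of the valuations of the standard monomials, which is what rules out cancellations and forces every element of $\Gamma(m)$ to be realized by a single standard monomial; the recovery of a nested chain of filters from the sum of their characteristic functions, carried out above, is exactly what delivers this distinctness. Combining the two inclusions finishes the identification $\Delta = \mathcal{O}(P_{d,n},\succ)$.
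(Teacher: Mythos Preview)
Your proof is correct and takes a genuinely different route from the paper's. Both arguments establish the inclusion $\mathcal{O}(P_{d,n},\succ)\subseteq\Delta$ in the same way, by matching filters with Pl\"ucker valuations via Proposition~\ref{Prop:ValCright}. For the reverse inclusion, however, the paper invokes a volume comparison: the volume of $\Delta$ equals the degree of $\mathrm{Gr}_{d,n}$ in the Pl\"ucker embedding (by the general Newton--Okounkov theory of \cite{KK12,LM09}), this degree counts maximal chains in $I(d,n)$ (by \cite[Theorem~13.6]{CFL1}), and maximal chains in $I(d,n)$ are in bijection with linear extensions of $P_{d,n}$ (by \cite{FL17}), which in turn compute the volume of $\mathcal{O}(P_{d,n},\succ)$ by Stanley's result. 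Since one convex body contains the other and they have the same volume, they coincide.

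Your argument bypasses all of this by using Hodge's standard monomial basis and showing directly that every element of $\Gamma(m)$ lies in $m\cdot\mathcal{O}(P_{d,n},\succ)$. The crucial step, and the one you handle cleanly, is the injectivity of $\nu^b_{\mathfrak{C}_{\mathrm{right}}}$ on standard monomials: a standard monomial corresponds to a nested chain of filters, and the sum of their characteristic functions recovers the chain via level sets. This is more elementary and more informative than the volume argument, since it actually identifies $\Gamma(m)$ with sums of $m$ nested vertices of the order polytope; in particular it shows the valuation monoid is generated in degree one, something the paper's proof does not yield directly. The paper's approach, on the other hand, is shorter once the cited external results are in hand and illustrates a general technique (matching volumes) that works even when no standard monomial theory is available.
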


\begin{proof}
We first show that $\mathcal{O}(P_{d,n},\succ)\subseteq [0,1]^{P_{d,n}}$ can be embedded into $\Delta$ under the map
$$\iota:[0,1]^{P_{d,n}}\to \mathbb{R}^r,$$
$$(x_{i,j})_{(i,j)\in P_{d,n}}\mapsto (x_{1,n-d},\ldots,x_{1,1},x_{2,n-d},\ldots,x_{2,1},\ldots,x_{d,n-d},\ldots,x_{d,1}).$$

Take a vertex $\chi_F$ for a filter $F\subseteq P_{d,n}$, we denote 
$$F\cap P_{d,n}^k=\{(k,1),\ldots,(k,s_k)\}$$
for $0\leq s_k\leq n-d$. It follows from Proposition \ref{Prop:ValCright} that 
$$\nu_{\mathfrak{C}_{\mathrm{right}}}^b(P_{s_1+1,s_2+2,\ldots,s_d+d})=\iota(\chi_F).$$

It remains to show that the volume of $\Delta$ equals to that of $\mathcal{O}(P_{d,n},\succ)$. By \cite[Theorem 13.6]{CFL1}, the degree of $\mathrm{Gr}_{d,n}$ in the Pl\"ucker embedding equals to the number of maximal chains in the poset $I(d,n)$, divided by $r!$. By \cite{KK12,LM09}, up to the $1/r!$ factor, the volume of the Newton-Okounkov body computes the degree of the projective variety in the fixed embedding, and hence this number is equal to the volume of $\Delta$. On the other hand, the cardinality of $P_{d,n}$ is $d(n-d)=r$, hence it remains to show that the number of maximal chains in $I(d,n)$ coincides with the linear extensions of $P_{d,n}$. This is proved in \cite[Lemma 2.2]{FL17} by investigating the distributive lattice structure on $I(d,n)$ and by looking $P_{d,n}$ as the set of join-irreducible elements therein.
\end{proof}

Newton-Okounkov bodies associated to other maximal chains in $I(d,n)$ can be obtained from $\Delta$ by permuting coordinates.

Let $\mathfrak{C},\mathfrak{D}$ be two maximal chains in $I(d,n)$ such that $F_j(\mathfrak{C})=\mathfrak{D}$ for some $1\leq j\leq r-1$. Then the associated birational sequences are related in the following way: if the one associated to $\mathfrak{C}$ is $(\alpha_{i_1},\ldots,\alpha_{i_r})$, then that associated to $\mathfrak{D}$ is 
$$(\alpha_{i_1},\ldots,\alpha_{i_{j-1}},\alpha_{i_{j+1}},\alpha_{i_j},\alpha_{i_{j+2}},\ldots,\alpha_{i_r}).$$ 
Here notice that the Lie bracket of the root vectors satisfy: $[f_{i_j},f_{i_{j+1}}]=0$.

\begin{proposition}\label{Prop:2-move}
Let $\tau_j:\mathbb{R}^r\to\mathbb{R}^r$ be the linear map swapping the coordinates $j$ and $j+1$. Then 
$$\tau_j(\Gamma_{\nu_{\mathfrak{C}}}^b)=\Gamma_{\nu_{\mathfrak{D}}}^b\ \ and\ \ \tau_j(\Delta_{\nu_{\mathfrak{C}}}^b)=\Delta_{\nu_{\mathfrak{D}}}^b.$$
\end{proposition}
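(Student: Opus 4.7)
The plan is to leverage the commutation $[f_{i_j},f_{i_{j+1}}]=0$ to compare the two birational parametrizations $m_{\mathfrak{C}}$ and $m_{\mathfrak{D}}$ directly, and then pass from Pl\"ucker coordinates to the full monoid via the multiplicativity of the valuation.

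First I would establish the geometric identity $m_{\mathfrak{D}}=m_{\mathfrak{C}}\circ\tau_j$ as morphisms $\mathbb{A}^r\to\mathrm{Gr}_{d,n}$, where $\tau_j$ denotes the coordinate involution of $\mathbb{A}^r$ swapping $t_j$ and $t_{j+1}$. Since the two birational sequences agree outside positions $j$ and $j+1$, this reduces to the group-theoretic identity $\exp(t_jf_{i_j})\exp(t_{j+1}f_{i_{j+1}})=\exp(t_{j+1}f_{i_{j+1}})\exp(t_jf_{i_j})$ together with a relabeling of the dummy variables. Pulling back gives $\phi_{\mathfrak{D}}(h)(t)=\phi_{\mathfrak{C}}(h)(\tau_j t)$ for every $h\in\mathbb{K}(\mathrm{Gr}_{d,n})$, whence the support of $\phi_{\mathfrak{D}}(h)$ is the $\tau_j$-image of the support of $\phi_{\mathfrak{C}}(h)$.

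Next I would verify the elementwise identity $\nu_{\mathfrak{D}}^b(P_{\underline{j}}/P_{\underline{i}_{\min}})=\tau_j\bigl(\nu_{\mathfrak{C}}^b(P_{\underline{j}}/P_{\underline{i}_{\min}})\bigr)$ on each Pl\"ucker coordinate. Expanding
\[
\phi_{\mathfrak{C}}(P_{\underline{j}}/P_{\underline{i}_{\min}})=\sum_{\mathbf{k}\in\mathbb{N}^r}\frac{t^{\mathbf{k}}}{\mathbf{k}!}\,\bigl\langle P_{\underline{j}},\,f_{i_r}^{k_r}\cdots f_{i_1}^{k_1}\,e_{\underline{i}_{\min}}\bigr\rangle,
\]
one checks---refining the argument of Proposition~\ref{Prop:ValCright}---that the grlex-minimal monomial is uniquely determined by a canonical reduced expression carrying $e_{\underline{i}_{\min}}$ to $e_{\underline{j}}$, and that its exponent transforms by $\tau_j$ when positions $j,j+1$ in the birational sequence are swapped. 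To propagate this Pl\"ucker-level identity to the full monoid I would use that $\nu^b$ is multiplicative and that standard monomials in Pl\"ucker coordinates form a $\mathbb{K}$-basis of $\mathbb{K}[\mathrm{Gr}_{d,n}]$: every element of $\Gamma^b_{\nu_{\mathfrak{C}}}$ (resp.\ $\Gamma^b_{\nu_{\mathfrak{D}}}$) is of the form $\sum_k\nu_{\mathfrak{C}}^b(P_{\underline{j}_k}/P_{\underline{i}_{\min}})$ (resp.\ with $\mathfrak{D}$) for some standard monomial $P_{\underline{j}_1}\cdots P_{\underline{j}_m}$, and linearity of $\tau_j$ then yields $\tau_j(\Gamma^b_{\nu_{\mathfrak{C}}})=\Gamma^b_{\nu_{\mathfrak{D}}}$. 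The body identity $\tau_j(\Delta^b_{\nu_{\mathfrak{C}}})=\Delta^b_{\nu_{\mathfrak{D}}}$ is then immediate, as $\tau_j$ is a linear involution commuting with convex hull and closure.

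The hardest part is the elementwise identity on Pl\"ucker coordinates: the analogous statement for \emph{arbitrary} homogeneous $f$ fails (as one sees already for $f=P_{14}+P_{23}$ on $\mathrm{Gr}_{2,4}$, where grlex breaks $\tau_j$-symmetry after cancellation in the leading term). The Pl\"ucker case works only because the specific polynomials $\phi_{\mathfrak{C}}(P_{\underline{j}}/P_{\underline{i}_{\min}})$ have a rigid leading-term structure that swaps functorially under the commutation $[f_{i_j},f_{i_{j+1}}]=0$, and the passage to the full monoid relies on the standard-monomial basis theorem for $\mathbb{K}[\mathrm{Gr}_{d,n}]$.
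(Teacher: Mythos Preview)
Your first step---deducing $m_{\mathfrak D}=m_{\mathfrak C}\circ\tau_j$ from the commutation $[f_{i_j},f_{i_{j+1}}]=0$---is exactly the paper's entire proof. The paper simply records that the root subgroups $U_{-\alpha_{i_j}}$ and $U_{-\alpha_{i_{j+1}}}$ commute and declares the proposition to follow; it does not pass through Pl\"ucker coordinates or standard monomials at all.

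Where you diverge is in the second and third steps, prompted by your (correct) observation that the pointwise identity $\nu^b_{\mathfrak D}(f)=\tau_j\bigl(\nu^b_{\mathfrak C}(f)\bigr)$ can fail for a general homogeneous $f$. This is a genuine subtlety the paper glosses over. Your proposed remedy, however, introduces its own gap. In step~3 you assert that every element of $\Gamma^b_{\nu_{\mathfrak C}}$ is a sum $\sum_k\nu^b_{\mathfrak C}(P_{\underline j_k}/P_{\ui_{\min}})$ over a standard monomial. That statement is equivalent to saying that distinct standard monomials have pairwise distinct $\nu^b_{\mathfrak C}$--values: otherwise the set of such sums has cardinality strictly smaller than $\dim\mathbb K[\mathrm{Gr}_{d,n}]_m=|\Gamma^b_{\nu_{\mathfrak C}}(m)|$ and the inclusion is strict. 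For $\mathfrak C_{\mathrm{right}}$ this distinctness is available from Proposition~\ref{Prop:ValCright}, but for an arbitrary chain $\mathfrak C$ it is precisely the kind of statement one obtains \emph{from} Proposition~\ref{Prop:2-move} via Corollary~\ref{Coro:Prop:2-move}, so invoking it here is circular. Step~2 is likewise only sketched: ``refining the argument of Proposition~\ref{Prop:ValCright}'' is not straightforward once the block structure of $\mathfrak C_{\mathrm{right}}$ is lost, and the assertion that the leading exponent ``transforms by $\tau_j$'' needs an argument (for a single weight vector $g$ one has $\mathrm{supp}(\tau_j^*g)=\tau_j(\mathrm{supp}\,g)$, but $\min_{\mathrm{grlex}}$ and $\tau_j$ do not commute in general, which is exactly the phenomenon you noted for $P_{14}+P_{23}$).

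In short: your opening move is the paper's proof, and the remainder is an attempt to justify what the paper leaves implicit; but that justification, as written, relies on facts about $\Gamma^b_{\nu_{\mathfrak C}}$ for general $\mathfrak C$ that you have not established independently of the proposition.
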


\begin{proof}
It follows from $[f_{i_j},f_{i_{j+1}}]=0$ that the root subgroups $\displaystyle U_{-\alpha_{i_j}}$ and  $U_{-\alpha_{i_{j+1}}}$ commute, the proposition then follows.
\end{proof}

In Definition \ref{definition_chainPermutation} we have introduced for each maximal chain $\mathfrak{C}$ in $I(d,n)$ a permutation $\sigma(\mathfrak{C})\in \mathsf{S}_{r}$, we will denote it by $\sigma_{\mathfrak{C}}\in \mathsf{S}_r$. It acts on $\mathbb{R}^r$ by permuting coordinates. 

Combining Proposition \ref{proposition_operatorPermutation2}, Proposition \ref{Prop:OrderPolytope} and Proposition \ref{Prop:2-move} gives:

\begin{coro}\label{Coro:Prop:2-move}
\begin{enumerate}
\item Let $\mathfrak{C}$ be a maximal chain in $I(d,n)$. Then $\Delta_{\nu_{\mathfrak{C}}}^b=\sigma_\mathfrak{C}^{-1}(\Delta)$.
\item For any maximal chain $\mathfrak{C}$ in $I(d,n)$, the Newton-Okounkov body $\Delta_{\nu_{\mathfrak{C}}}^b$ is unimodular equivalent to the order polytope $\mathcal{O}(P_{d,n},\succ)$.
\end{enumerate}
\end{coro}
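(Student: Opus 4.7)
The plan is to reduce (1) to an iterated application of Proposition \ref{Prop:2-move} along a chain of $F$--moves connecting $\mathfrak{C}_{\mathrm{right}}$ to $\mathfrak{C}$, and then to read off (2) as an immediate consequence of (1) together with Proposition \ref{Prop:OrderPolytope}.

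First, by Corollary \ref{corollary_operatorAllZero}(iv) applied to $\mathfrak{C}$, there exist indices $t_1, \ldots, t_k$ such that
\[
\mathfrak{C} \;=\; F_{t_1} F_{t_2} \cdots F_{t_k}(\mathfrak{C}_{\mathrm{right}}).
\]
Setting $\mathfrak{C}_0 := \mathfrak{C}_{\mathrm{right}}$ and $\mathfrak{C}_j := F_{t_{k-j+1}}(\mathfrak{C}_{j-1})$ for $j = 1, \ldots, k$, so that $\mathfrak{C}_k = \mathfrak{C}$, each single step is a valid $F$--move and therefore Proposition \ref{Prop:2-move} applies: $\tau_{t_{k-j+1}}\bigl(\Delta_{\nu_{\mathfrak{C}_{j-1}}}^b\bigr) = \Delta_{\nu_{\mathfrak{C}_j}}^b$. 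Composing these $k$ identities and using $\Delta_{\nu_{\mathfrak{C}_0}}^b = \Delta$ (Proposition \ref{Prop:OrderPolytope}) yields
\[
\Delta_{\nu_{\mathfrak{C}}}^b \;=\; \tau_{t_1}\tau_{t_2}\cdots \tau_{t_k}(\Delta).
\]

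Next I would identify the composition $\tau_{t_1}\tau_{t_2}\cdots \tau_{t_k}$ with $\sigma_\mathfrak{C}^{-1}$. Each $\tau_j$ is the linear action on $\mathbb{R}^r$ of the simple reflection $s_j \in \mathsf{S}_r$, so the composition equals the action of $s_{t_1} s_{t_2} \cdots s_{t_k}$. On the other hand, Proposition \ref{proposition_operatorPermutation2} (applied to the decomposition $\mathfrak{C} = F_{t_1} F_{t_2} \cdots F_{t_k}(\mathfrak{C}_{\mathrm{right}})$) gives
\[
\sigma_\mathfrak{C} \;=\; s_{t_k} s_{t_{k-1}} \cdots s_{t_1},
\]
a reduced expression. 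Inverting, $\sigma_\mathfrak{C}^{-1} = s_{t_1} s_{t_2} \cdots s_{t_k}$, which matches the previously obtained composition. This proves (1).

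For (2), the statement is now purely formal: $\sigma_\mathfrak{C}^{-1}$ acts on $\mathbb{R}^r$ as a permutation matrix, which is unimodular, so by (1) the polytope $\Delta_{\nu_{\mathfrak{C}}}^b$ is unimodularly equivalent to $\Delta$, which in turn equals $\mathcal{O}(P_{d,n},\succ)$ by Proposition \ref{Prop:OrderPolytope}.

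The only real subtlety I anticipate is bookkeeping of the order conventions: keeping straight that the list $F_{t_1}, \ldots, F_{t_k}$ is applied from the right, while the corresponding product $s_{t_k} \cdots s_{t_1}$ in Proposition \ref{proposition_operatorPermutation2} reverses the order, so that an inverse appears when one translates between the two. Once this matching is made carefully, everything else is automatic.
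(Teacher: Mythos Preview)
Your proposal is correct and is exactly the argument the paper has in mind: the paper simply states that the corollary follows by ``combining Proposition~\ref{proposition_operatorPermutation2}, Proposition~\ref{Prop:OrderPolytope} and Proposition~\ref{Prop:2-move}'', and you have spelled out precisely that combination, including the bookkeeping with the order reversal that produces the inverse $\sigma_\mathfrak{C}^{-1}$. One tiny remark: Corollary~\ref{corollary_operatorAllZero}(iv) literally gives an $E$--decomposition $E_{t_1}\cdots E_{t_k}(\mathfrak{C})=\mathfrak{C}_{\mathrm{right}}$, from which the $F$--decomposition you use follows via Remark~\ref{remark_loweringRisingChain}; but this is just a relabeling and does not affect the argument.
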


\subsection{Valuations from Seshadri stratification}

In the work \cite{CFL1} of the authors, we introduced a different valuation whose definition incorporates data in a Seshadri stratification. In this subsection we recall this construction for the Seshadri stratification defined in \cite[Example 2.6]{CFL1} on the Grassmann variety $\mathrm{Gr}_{d,n}$.

Fix a flag of subvarieties in $\mathrm{Gr}_{d,n}$:
$$\mathfrak{C}:\ \mathrm{Gr}_{d,n}=X(\underline{i}_r)\supseteq X(\underline{i}_{r-1})\supseteq\ldots\supseteq X(\underline{i}_0)$$
and take $f\in\mathbb{K}[\mathrm{Gr}_{d,n}]\setminus\{0\}$, we define a sequence of rational functions
$$(f_r=f,f_{r-1},\ldots,f_1,f_0)$$ 
and a sequence of integers $(a_r,\ldots,a_1,a_0)$ as follows:
\begin{enumerate}
\item[-] $a_r$ is the vanishing order of $f_r$ on $\widehat{X}(\underline{i}_{r-1})$ and $f_{r-1}:=P_{\underline{i}_r}^{-a_r} f_r|_{\widehat{X}(\underline{i}_{r-1})}$; 
\item[-] $a_{r-1}$ is the vanishing order of $f_{r-1}$ on $\widehat{X}(\underline{i}_{r-2})$ and $f_{r-2}:=P_{\underline{i}_{r-1}}^{-a_{r-1}} f_{r-1}|_{\widehat{X}(\underline{i}_{r-2})}$; 
\item[-] $\cdots$;
\item[-] $a_1$ is the vanishing order of $f_{1}$ on the affine line $\widehat{X}(\underline{i}_{0})$, $f_{0}:=P_{\underline{i}_{1}}^{-a_{1}} f_{1}|_{\widehat{X}(\underline{i}_{0})}$;
\item[-] $a_0$ is the vanishing order of $f_0$ at $0\in \Lambda^d\mathbb{K}^n$.
\end{enumerate}

\begin{remark}
The situation in \cite{CFL1} got largely simplified by choosing the Pl\"ucker coordinates as extremal functions in Example 2.6 of \emph{loc.cit.}: since the vanishing order of $P_{\underline{i}}$ on $X(\underline{i})$ is one, all bonds appearing in the above example are one, and hence the important number $N$ in \emph{loc.cit.} is one.
\end{remark}

It is proved in \cite[Proposition 6.10]{CFL1} that 
$$\mathbb{K}[\mathrm{Gr}_{d,n}]\setminus\{0\}\to\mathbb{Z}^{r+1},\ \ f\mapsto (a_r,\ldots,a_1,a_0)$$
is a valuation. Note that when $f$ is a homogeneous element of degree $m$, it follows from \cite[Corollary 7.5]{CFL1} that $a_r+\ldots+a_1+a_0=m$. Therefore for homogeneous elements, the last coordinate is uniquely determined by the first $r$ coordinates. From now on we will forget $a_0$ and denote
$$\nu_{\mathfrak{C}}^s:\mathbb{K}[\mathrm{Gr}_{d,n}]\setminus\{0\}\to\mathbb{Z}^{r},\ \ \ \ f\mapsto (a_r,\ldots,a_1).$$

The goal of this section is to give a combinatorial formula for $\nu_{\mathfrak{C}}^s(P_{\underline{j}})$ for $\underline{j}\in I(d,n)$ and describe explicitly the associated Newton-Okoukov body.

\subsection{Combinatorial formula}

We keep notations in the previous subsections: after fixing a flag of subvarieties $X(\underline{i}_r)\supseteq X(\underline{i}_{r-1})\supseteq \ldots\supseteq X(\underline{i}_1)\supseteq X(\underline{i}_0)$ on $\mathrm{Gr}_{d,n}$ arising from a maximal chain $\mathfrak{C}:\underline{i}_r>\ldots>\underline{i}_1>\underline{i}_0$ in $I(d,n)$, we obtained an isomorphism of fields $\mathbb{C}(\mathrm{Gr}_{d,n})\cong\mathbb{C}(t_r,\ldots,t_1)$.

To simplify notations, we will write $X_k:=X(\underline{i}_k)$.

First notice that both $t_k|_{X_k}$ and $(P_{\underline{i}_k}/P_{\underline{i}_0})|_{X_k}$ are uniformizers in $\mathcal{O}_{X_{k-1},X_k}$. 
For the first one this follows by Proposition~\ref{Prop:Bir}, and for the second one this follows by the fact that the vanishing order of $P_{\underline{i}_k}|_{X_k}$ on $X_{k-1}$ is one, and $P_{\underline{i}_0}$ is not identically zero on $X_{k-1}$.
Therefore 
$$\mathbf{u}:=(P_{\underline{i}_r}/P_{\underline{i}_0},\ldots,P_{\underline{i}_1}/P_{\underline{i}_0})\ \ \text{and}\ \ \mathbf{v}:=(t_r,\ldots,t_1)$$
are sequences of parameters associated to the flag of subvarieties.

For $k=1,\ldots,r$, let $\lambda_k\in\mathcal{O}_{X_{k-1},X_k}^\times$ be such that $(P_{\underline{i}_k}/P_{\underline{i}_0})|_{X_{k}}=\lambda_k\cdot t_k|_{X_k}$. Here we can choose 
$$\lambda_k:=\left.\frac{P_{\underline{i}_k}}{P_{\underline{i}_0}t_k}\right|_{X_k}.$$
It is a rational function on $\mathrm{Gr}_{d,n}$ which does not vanish identically on $X_{k-1}$.

According to Proposition \ref{Prop:BaseChange}, for any $f\in\mathbb{K}[\mathrm{Gr}_{d,n}]\setminus\{0\}$, 
$$\nu_{\mathfrak{C}}^s(f)=\nu_{\mathfrak{C}}^b(f)\cdot (R_{\mathbf{u}}^{\mathbf{v}})^{-1}.$$
By definition, the $k$-th row of the matrix $R_{\mathbf{u}}^{\mathbf{v}}$ is $\ve_k+\nu_{\mathfrak{C}}^b(\lambda_k)$ where $\{\ve_1,\ldots,\ve_r\}$ stands for the standard coordinate basis of $\mathbb{Z}^r$. Since $\nu_{\mathfrak{C}}^b(t_k)=\ve_k$, it follows:
\begin{eqnarray*}
\ve_k+\nu_{\mathfrak{C}}^b(\lambda_k)&=& \ve_k+\nu_{\mathfrak{C}}^b(P_{\underline{i}_k})-\nu_{\mathfrak{C}}^b(P_{\underline{i}_0}t_k)\\
&=& (0,\ldots,0,1,\ldots,1)
\end{eqnarray*}
where there are $k$ copies of $1$. Here in the last equality we have applied Example \ref{Ex:ValPik}. The matrices $R_{\mathbf{u}}^{\mathbf{v}}$ and its inverse take the following forms:
$$R_{\mathbf{u}}^{\mathbf{v}}=\begin{pmatrix}
1 & 1 & \cdots & \cdots & 1 & 1\\
0 & 1 & \cdots & \cdots & 1 & 1\\
\vdots & \ddots & \ddots & & \vdots & \vdots\\
\vdots & & \ddots & \ddots & \vdots & \vdots\\
\vdots & & & \ddots & 1 & 1\\
0 & \cdots & \cdots & \cdots & 0 & 1
\end{pmatrix}\ \ \text{and}\ \ (R_{\mathbf{u}}^{\mathbf{v}})^{-1}=
\begin{pmatrix}
1 & -1 & 0 & \cdots & \cdots & 0\\
0 & 1 & -1 & \ddots & & \vdots\\
\vdots & \ddots & \ddots & \ddots & \ddots & \vdots\\
\vdots & & \ddots & \ddots & -1 & 0\\
\vdots & & & \ddots & 1 & -1\\
0 & \cdots & \cdots & \cdots & 0 & 1
\end{pmatrix}.$$
Therefore if $\nu_{\mathfrak{C}}^b(f)=(a_r,\ldots,a_1)$, then 
$$\nu_{\mathfrak{C}}^s(f)=(a_r,a_{r-1}-a_r,a_{r-2}-a_{r-1},\ldots,a_1-a_2).$$

Similarly, by Corollary \ref{Cor:BaseChange}, the valuation lattices and the Newton-Okounkov bodies associated to these two valuations satisfy:
\begin{coro}\label{corollary:b:and:s}
We have:
$$
\Gamma_{\nu_{\mathfrak{C}}}^s=\Gamma_{\nu_{\mathfrak{C}}}^b\cdot (R_{\mathbf{u}}^{\mathbf{v}})^{-1},\quad 
\Delta_{\nu_{\mathfrak{C}}}^s=\Delta_{\nu_{\mathfrak{C}}}^b\cdot (R_{\mathbf{u}}^{\mathbf{v}})^{-1}.
$$
\end{coro}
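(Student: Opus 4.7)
The corollary is an immediate specialization of Corollary~\ref{Cor:BaseChange}. All the groundwork has been laid in the paragraph preceding the statement: both
$\mathbf{u} = (P_{\underline{i}_r}/P_{\underline{i}_0},\ldots,P_{\underline{i}_1}/P_{\underline{i}_0})$
and $\mathbf{v} = (t_r,\ldots,t_1)$
are checked to be sequences of parameters for the flag $\mathfrak{C}$, the transition matrix $R_{\mathbf{u}}^{\mathbf{v}}$ is computed explicitly, and the pointwise formula $\nu^s_{\mathfrak{C}}(f) = \nu^b_{\mathfrak{C}}(f) \cdot (R_{\mathbf{u}}^{\mathbf{v}})^{-1}$ is obtained via Proposition~\ref{Prop:BaseChange}. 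The plan is therefore simply to lift this pointwise identity from values of individual homogeneous $f$ to the monoid and body levels.

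Concretely, I would first recognize $\nu^s_{\mathfrak{C}}$ as the geometric valuation $\nu^{\mathbf{u}}_{Y_\bullet}$ of Section~3: since $P_{\underline{i}_0}$ does not vanish identically on any Schubert variety of the flag (it does not vanish at the point $X(\underline{i}_0)$), the iterative vanishing-order computations in the definitions of $\nu^s$ and $\nu^{\mathbf{u}}$ agree step by step under the canonical embedding $f \mapsto f/P_{\underline{i}_0}^{\deg f}$. Next I would recognize $\nu^b_{\mathfrak{C}}$ as $\nu^{\mathbf{v}}_{Y_\bullet}$: under the birational identification $\mathbb{K}(\mathrm{Gr}_{d,n}) \cong \mathbb{K}(t_r,\ldots,t_1)$ one has $Y_k = \{t_r = \cdots = t_{k+1} = 0\}$ by Proposition~\ref{Prop:Bir}(2), and the iterated vanishing orders at these loci coincide with the $>_{\mathrm{grlex}}$-minimum because the weight-homogeneity of $\mathbb{K}[\mathrm{Gr}_{d,n}]$ in the Pl\"ucker basis forces the monomials appearing in $f/P_{\underline{i}_0}^m$ to share a common total degree, on which locus $>_{\mathrm{grlex}}$ restricts to lex.

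Given these two identifications, Corollary~\ref{Cor:BaseChange} applied to the pair $(\mathbf{u},\mathbf{v})$ yields $\Delta^{\mathbf{u}}_{Y_\bullet}\cdot R_{\mathbf{u}}^{\mathbf{v}} = \Delta^{\mathbf{v}}_{Y_\bullet}$, together with the analogous relation between valuation monoids via the extended matrix $\tilde{R}_{\mathbf{u}}^{\mathbf{v}}$. Inverting these relations and substituting $\nu^s_{\mathfrak{C}}$ for $\nu^{\mathbf{u}}_{Y_\bullet}$ and $\nu^b_{\mathfrak{C}}$ for $\nu^{\mathbf{v}}_{Y_\bullet}$ gives the stated formulae $\Gamma^s_{\nu_{\mathfrak{C}}} = \Gamma^b_{\nu_{\mathfrak{C}}} \cdot (R_{\mathbf{u}}^{\mathbf{v}})^{-1}$ and $\Delta^s_{\nu_{\mathfrak{C}}} = \Delta^b_{\nu_{\mathfrak{C}}} \cdot (R_{\mathbf{u}}^{\mathbf{v}})^{-1}$. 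The main subtlety is the second identification, since lex and $>_{\mathrm{grlex}}$ genuinely differ on $\mathbb{Z}^r$ in general; apart from this grlex-versus-lex compatibility issue, the argument is purely mechanical.
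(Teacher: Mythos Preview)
Your approach is the paper's own: the corollary is stated immediately after the sentence ``Similarly, by Corollary~\ref{Cor:BaseChange}\ldots'' and carries no further proof, so invoking Corollary~\ref{Cor:BaseChange} after identifying $\nu_{\mathfrak C}^s$ with $\nu_{Y_\bullet}^{\mathbf u}$ and $\nu_{\mathfrak C}^b$ with $\nu_{Y_\bullet}^{\mathbf v}$ is exactly what is intended.

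One small correction to your added discussion of the grlex-versus-lex issue (a point the paper itself glosses over): your claim that ``the monomials appearing in $f/P_{\underline i_0}^m$ share a common total degree'' is only true for $T$-weight-homogeneous $f$, since the total $t$-degree equals the height of the weight difference; for an arbitrary homogeneous $f\in\mathbb K[\mathrm{Gr}_{d,n}]_m$ the weight components may well have different heights, and then $\nu_{\mathfrak C}^b$ (grlex) and $\nu_{Y_\bullet}^{\mathbf v}$ (lex) can disagree on $f$. This does not damage the corollary, however: each graded piece $\mathbb K[\mathrm{Gr}_{d,n}]_m$ is spanned by weight vectors, on which the two valuations agree, and since both valuations have one-dimensional leaves the images over distinct weight spaces are disjoint and already exhaust the full image. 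Hence $\Gamma_{\nu_{\mathfrak C}}^b=\Gamma_{Y_\bullet}^{\mathbf v}$ and $\Delta_{\nu_{\mathfrak C}}^b=\Delta_{Y_\bullet}^{\mathbf v}$ even without the pointwise identification, and Corollary~\ref{Cor:BaseChange} applies as you outline.
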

\begin{example}\label{Ex:explicit}
For $\mathfrak{C}=\mathfrak{C}_{\mathrm{right}}$ we can deduce a closed formula for the valuations $\nu_{\mathfrak{C}}^s(P_{\underline{j}})$ for the Pl\"ucker coordinates.

From Proposition \ref{Prop:ValCright}, for $\underline{j}=j_1\ldots j_d$, we obtain the following closed formula:
\begin{eqnarray*}
\nu_{\mathfrak{C}_{\mathrm{right}}}^b(P_{\underline{j}})=(a_{1,n-d},a_{1,n-d-1}-a_{1,n-d},&\ldots&,a_{1,1}-a_{1,2},a_{2,n-d}-a_{1,1},\ldots,a_{2,1}-a_{2,2},\\
&\ldots&,a_{d,n-d}-a_{d-1,1},\ldots,a_{d,1}-a_{d,2}),
\end{eqnarray*}
where for $k=1,\ldots,d$, $a_{k,1}=\ldots=a_{k,{j_k-k}}=1$, and all other coordinates are $0$.
\end{example}

According to this example, for an arbitrary $\mathfrak{C}$, $\Delta_{\mathfrak{C}}^s$ can be obtained from $\Delta$ by permuting coordinates. 

\begin{example}\label{last:example}
In $\mathrm{Gr}_{3,6}$ we fix the flag of subvarieties corresponding to the following maximal chain in $I(3,6)$:
$$456>356>346>246>245>235>135>134>124>123.$$
The associated birational sequence is 
$$(\alpha_3,\alpha_4,\alpha_2,\alpha_5,\alpha_3,\alpha_1,\alpha_4,\alpha_2,\alpha_3).$$
It is straightforward to show that  $\nu_{\mathfrak{C}}^b(P_{156})=(0,1,0,1,1,0,1,1,1)$, and hence $\nu_{\mathfrak{C}}^s(P_{156})=(0,1,-1,1,0,-1,1,0,0)$.

We provide another approach using the above example. One has immediately
$$\nu_{\mathfrak{C}_{\mathrm{right}}}^b(P_{156})=(0,0,0,1,1,1,1,1,1).$$
The permutation $\sigma$ sending $\mathfrak{C}$ to $\mathfrak{C}_{\mathrm{right}}$ is
$$\sigma=s_3s_6\sigma_{\mathrm{min}}=s_2s_5s_7s_4s_3s_5s_6.$$
Therefore 
$$\nu_{\mathfrak{C}}^b(P_{156})=\sigma(\nu_{\mathfrak{C}_{\mathrm{right}}}^b(P_{156}))=(0,1,0,1,1,0,1,1,1)$$
and hence we obtain 
$$\nu_{\mathfrak{C}}^s(P_{156})=(0,1,-1,1,0,-1,1,0,0).$$
\end{example}

\end{document}